\setlist[enumerate]{leftmargin=.5in}
\setlist[itemize]{leftmargin=.5in}
\def\[{\begin{equation}}
	\def\]{\end{equation}}
\newcommand{\dom}[1]{\mathrm{\bf dom}\,{(#1)}} 
\newcommand{\prox}{\mathrm{\bf Prox}} 
\newcommand{\dist}{\mathrm{\bf dist}} 
\def\nn{\nonumber}
\def\E{{\mathcal E}}
\def\u{{\bm{u}}}
\def\t{\top}
\def\B{\mathscr{B}}
\def\R{{\mathbb R}}
\def\L{{\mathscr L}}
\def\X{{\mathcal X}}
\def\Y{{\mathcal Y}}
\def\U{{\mathcal U}}
\def\bbf{{\bm{f}}}
\def\bx{{\bm x}}
\def\by{{\bm{y}}}
\def\bA{{\bm{A}}}
\begin{document}
\graphicspath{{./FIG/},{./PIC/}}

\markboth{H. HE, K. WANG AND J. YU}{A symmetric primal-dual algorithmic framework for saddle point problems}

\title{A SYMMETRIC PRIMAL-DUAL ALGORITHMIC FRAMEWORK FOR SADDLE POINT PROBLEMS}

\author{
	Hongjin He\thanks{School of Mathematics and Statistics, Ningbo University, Ningbo, China. \\ Email: hehongjin@nbu.edu.cn}
\and
    Kai Wang\thanks{School of Mathematics and Statistics, Nanjing University of Science and Technology, Nanjing, China.\\ Email: wangkaihawk@njust.edu.cn}
\and
    Jintao Yu \thanks{School of Mathematics and Statistics, Ningbo University, Ningbo, China. \\ Email: yujintao0045@163.com}
    }

\maketitle

\begin{abstract}
In this paper, we propose a new primal-dual algorithmic framework for a class of convex-concave saddle point problems frequently arising from image processing and machine learning. Our algorithmic framework updates the primal variable between the twice calculations of the dual variable, thereby appearing a symmetric iterative scheme, which is accordingly called the {\bf s}ymmetric {\bf p}r{\bf i}mal-{\bf d}ual {\bf a}lgorithm (SPIDA). It is noteworthy that the subproblems of our SPIDA are equipped with Bregman proximal regularization terms, which make SPIDA versatile in the sense that it enjoys an algorithmic framework to understand the iterative schemes of some existing algorithms, such as the classical augmented Lagrangian method (ALM), linearized ALM, and Jacobian splitting algorithms for linearly constrained optimization problems. Besides, our algorithmic framework allows us to derive some customized versions so that SPIDA works as efficiently as possible for structured optimization problems. Theoretically, under some mild conditions, we prove the global convergence of SPIDA and estimate the linear convergence rate under a generalized error bound condition defined by Bregman distance. Finally, a series of numerical experiments on the basis pursuit, robust principal component analysis, and image restoration demonstrate that our SPIDA works well on synthetic and real-world datasets.
\end{abstract}

\begin{classification}
90C25; 90C47; 90C90
\end{classification}

\begin{keywords}
Primal-dual algorithm; Saddle point problem; Bregman distance; Augmented Lagrangian method; Convex programming.
\end{keywords}

\section{Introduction}\label{sec1}
Recently, saddle point problems have received considerable attention in the signal/image processing, machine learning, and optimization communities, e.g., see \cite{CP11,CP16,EZC10,RHLNSH20,ZC08b}, to name just a few.
In this paper, we are interested in the convex-concave saddle point problem with a bilinear coupling term, which takes the following form:
\begin{equation}\label{Problem}
	\min_{x \in \X}\max_{y \in \Y} \left\{\L(x,y) := f(x) +\langle Ax, y \rangle  - g(y)\right\},
\end{equation}	
where $\X\subseteq \R^n$ and $\Y\subseteq \R^m$ are two closed nonempty convex sets, both $f(\cdot): \X \rightarrow (-\infty, \infty]$ and $g(\cdot): \Y \rightarrow (-\infty, \infty]$ are proper closed convex (possibly nonsmooth) functions, $\langle \cdot, \cdot \rangle$ represents the standard inner product of vectors, and $A: \R^n \rightarrow \R^m$ is a bounded linear operator. It is interesting that \eqref{Problem} provides a unified framework for the treatment of convex composite optimization problem
\begin{equation*}\label{P_Problem}
	\min_{x \in \R^n} \left\{f(x)  + g^*(Ax)\right\}
\end{equation*}
and the canonical convex minimization problem with linear constraints (see Section \ref{Sec-LCM}), where $g^*(\cdot)$ is the Fenchel conjugate of function $g(\cdot)$. 

To efficiently exploit the min-max structure of \eqref{Problem}, a seminal work can be traced back to the Arrow-Hurwicz Primal-Dual (AHPD) method \cite{AHU58}, which updates the primal and dual variables in a sequential order by solving two optimization subproblems as follows:
\begin{equation}\label{AHPD}
	\left\{
	\begin{aligned}
		x^{k+1} &= \arg\min_{x\in \mathcal{X}}\left\{ f(x) + \langle Ax, y^{k}\rangle + \frac{\mu}{2} \| x-x^{k} \|^{2} \right\}, \\
		y^{k+1} &= \arg\max_{y\in \mathcal{Y}}\left\{ -g(y) + \langle A x^{k+1}, y\rangle - \frac{\gamma}{2} \| y-y^{k} \|^{2} \right\},
	\end{aligned}\right.
\end{equation}
where $\mu$ and $\gamma$ are two positive proximal regularization parameters serving as step sizes for updating. In the literature, such a method is also reemphasized as primal-dual hybrid gradient (PDHG) method with fruitful applications in image processing \cite{BR12,EZC10,HMY17,ZC08b}. Although some convergence properties have been established under additional conditions \cite{EZC10,HYY15,MSMC15}, the most recent work \cite{HXY22} showed that the AHPD method with any constant step size is not necessarily convergent for solving generic convex-concave saddle point problems.
In 2011, Chambolle and Pock \cite{CP11} judiciously introduced a first-order primal-dual algorithm by absorbing an extrapolation step for algorithmic acceleration. Note that such an algorithm is commonly denoted by PDHG in the optimization literature. Therefore, we also use PDHG to represent the first-order primal-dual algorithm \cite{CP11} throughout this paper. For given the $k$-th iterate $(x^k,y^k)$, the iterative scheme of the PDHG \cite{CP11} reads as
\begin{numcases}{\label{f1}}
	x^{k+1} = \arg\min_{x\in \mathcal{X}}\left\{ f(x) + \langle Ax, y^{k}\rangle + \frac{\mu}{2} \| x-x^{k} \|^{2} \right\}, \nonumber\\
	\tilde{x}^{k+1} = x^{k+1} + \tau(x^{k+1}-x^{k}), \label{fopda} \\
	y^{k+1} = \arg\max_{y\in \mathcal{Y}}\left\{-g(y) + \langle A \tilde{x}^{k+1}, y\rangle - \frac{\gamma}{2} \| y-y^{k} \|^{2} \right\}, \nonumber
\end{numcases}
where $\tau \in [0, 1]$ is an extrapolation parameter and both $\mu > 0$ and $\gamma > 0$ are regularization parameters. In particular, they further proved some convergence properties for the special case $\tau=1$ under the following requirement:
\begin{equation}\label{condition}
	\|A A^\top\| < \mu\gamma.
\end{equation}
It is notable that the PDHG not only requires weaker convergence-guaranteeing conditions, but also runs faster than the AHPD method (see \cite{CP11,CP16b}). In recent years, there are some papers contributed to further studies on the extrapolation step, e.g., see \cite{CHX13,CY21,HY12b,HDW16,WH20}. As aforementioned,  both $\mu$ and $\gamma$ serve as step sizes for updating. From computational perspective, larger step sizes usually lead to faster convergence, which accordingly encourages researchers to relax condition \eqref{condition}, e.g., see \cite{HMXY22,JZH23,LY24,LY21}. When the $x$- and $y$-subproblems are not easily to be solved or the maximum eigenvalue of $A A^\top$ cannot be easily evaluated in some cases, a better way is to solve the underlying subproblems in an inexact way or the employment of line search for avoiding the calculation of $\|A A^\top\|$, e.g., see \cite{CYZ22,JCWH21,JWCZ21,MP18,RC20}. In recent years, saddle point problems and primal-dual algorithms received much attention in the fields of machine learning, imaging science, and optimization. Here, we only refer the reader to \cite{CKCH23,KP15,RHLNSH20,Val21} for recent surveys and references therein along this direction.

It is well-known that the saddle point problem \eqref{Problem} provides a powerful treatment for linearly constrained convex optimization problems (e.g., see Section \ref{Sec-LCM}). In this application, the dual variable $y$ in \eqref{Problem} serves as the so-called Lagrangian multiplier. However, as shown in the excellent overview on a phenomenon of slow convergence of optimization algorithms \cite{IS15} (also see some comments \cite{Fis15,Mar15,Mor15,Rob15}), critical multipliers are nonempty for optimal solutions with nonunique Lagrangian multipliers, which play a crucial negative role in numerical optimization yielding slow convergence of major primal-dual algorithms, including Newton and Newton-related methods, the Augmented Lagrangian Method (ALM), and the sequential quadratic programming method. Therefore, these surprising discoveries clearly demonstrate that the updating scheme of the dual variable (i.e., Lagrangian multiplier) is very important for algorithmic acceleration, which motivates us to develop some ``dual stabilization techniques'' for solving \eqref{Problem}. Besides, most of saddle point problems arising from machine learning and image processing display unbalanced primal and dual subproblems in the sense that the dual problem is often easier than the primal one (e.g., see \cite{CP11,CP16} and also Section \ref{Sec-LCM}). As studied in the most recent work \cite{HY21,MCJH23}, balancing the subproblems of the classical ALM is able to greatly speed up the convergence of solving linearly constrained optimization problems. Therefore, how to balance both subproblems and what will be produced by some balancing technique for \eqref{Problem} are also motivations of this paper.

Considering the different complexity of primal and dual subproblems, we in this paper employ the symmetric spirit to design a new primal-dual algorithmic framework for saddle point problem \eqref{Problem}, where the primal variable is updated once between the twice calculations of the dual variable. To a certain extent, our algorithm is able to balance the computation of primal and dual subproblems, and the one more calculation of the dual variable can be regarded as some dual stabilization technique from numerical perspective.
Since the proposed algorithm appears a symmetric updating order on the dual variable, we call it symmetric primal-dual algorithm and denote it by SPIDA for simplicity.
A toy example shows that our SPIDA has a nice convergence behavior, while the AHPD fails to converge and the PDHG runs a little slower than our SPIDA for some proximal parameters.
Notice that each subproblem is equipped with a Bregman proximal term to make our algorithm versatile so that we can easily derive the iterative schemes of some classical first-order optimization methods, including the ALM and its linearized version for one-block linearly constrained convex optimization problems, and some Jacobian splitting algorithms for multi-block linearly constrained convex minimization problems.
Particularly, our algorithmic framework is of benefit for producing some customized variants for linearly constrained optimization problems. Theoretically, we prove that our SPIDA is globally convergent under standard conditions, while the linear convergence rate is also estimated under a generalized error bound condition defined by Bregman distance.
Finally, a series of numerical experiments on basis pursuit, robust principal component analysis (RPCA), and image restoration demonstrate that our SPIDA performs better than some state-of-the-art primal-dual algorithms in many cases.

The remainder of paper is organized as follows. In Section \ref{Sec2}, we recall some notations and definitions that will be used throughout this paper. In Section \ref{Sec3}, we first describe the details of our proposed SPIDA for \eqref{Problem}. Then, we prove its global convergence and estimate its linear convergence rate. In Section \ref{Sec-LCM}, we apply our SPIDA to solve the linearly constrained convex minimization problems, and show that some classical first-order optimization methods are special cases of the proposed SPIDA. In Section \ref{Sec5}, we conduct the numerical performance of our SPIDA on solving some structured optimization problems with synthetic and real datasets. Finally, we complete this paper with drawing some conclusions in Section \ref{Sec6}.

\section{Preliminaries}\label{Sec2}
In this section, we summarize some notations and basic concepts that will be used in subsequent analysis.

Throughout this paper, the superscript symbol $^\top$ represents the transpose for vectors and matrices. Let $\R^n$ be an $n$-dimensional Euclidean space endowed with the $M$-inner product $\langle x,y\rangle_M=\langle x, My\rangle=x^\top M y$, where $x,y\in \R^n$ and $M$ is a symmetric and positive definite (or semi-definite) matrix ($M\succ 0$ (or $\succeq 0$) for short). Consequently, for a given vector $x\in \R^n$, we define the $M$-norm by
\begin{equation*}\label{Mnorm}
	\|x\|_{M}=\sqrt{\left\langle x, Mx\right\rangle}.
\end{equation*}
In particular, when $M$ is an identity matrix, the $M$-norm reduces to the standard Euclidean norm (denoted by $\|x\|$). Moreover, for a matrix $A$, we use $\|A\|$ to represent the square root of the maximum eigenvalue of $A^\top A$.

\begin{definition}\label{def:lsc}
	Let $f(\cdot):\R^n \to  [-\infty,+\infty] $ be an extended real-valued function, and denote the domain of $f(\cdot)$ by
	$\dom{f} := \left\{x\in \R^n\;|\;f(x) <\infty\right\}$. Then, we say that the function $f(\cdot)$ is
	\begin{enumerate}			
		\item[\rm (i)] proper if $f(x)  >-\infty$ for all $x \in \R^n$ and $\dom{f}\neq \emptyset$;
		\item[\rm (ii)] convex if $f\left(tx+(1-t)y\right)\leq t f(x) +(1-t)f(y)$ for any $x,y\in \dom{f}$ and $t\in [0,1]$;
		\item[\rm (iii)] $\varrho$-strongly convex with a given $\varrho>0$ if $\dom{f}$ is convex and the following inequality holds for any $x,y\in \dom{f}$ and $t\in[0,1]$:
		$$f(tx + (1-t)y)\leq t f(x) + (1-t)f(y) - \frac{\varrho}{2}t(1-t)\|x-y\|^2.$$
	\end{enumerate}
\end{definition}

Let $f(\cdot):\R^n\to(-\infty,+\infty]$ be a proper, closed and convex function, then the subdifferential of $f(\cdot)$ at $x \in \dom{f}$ is given by
$$\partial f(x)= \left\{\; \xi\,|\, f(z) \geq f(x) + \left\langle z-x, \xi\right\rangle, \,\forall \, z \in \dom{f}  \;\right\}.$$
In what follows, we denote $\dom{\partial f}:=\{x\in\R^n\;|\;\partial f(x)\neq \emptyset\}$. Then, the following first-order characterizations of strong convexity are frequently used for analysis (e.g., see \cite[Theorem 5.24]{Beck17}).
\begin{lemma}\label{lem:strong}
	Let $f(\cdot): \R^n\to (-\infty,\infty]$ be a proper closed and convex function. Then, for a given $\varrho>0$, the following three claims are equivalent:
	\begin{itemize}
		\item[\rm (i)] $f(\cdot)$ is $\varrho$-strongly convex.
		\item[\rm (ii)] $f(y)\geq f(x)+\langle \xi, y-x\rangle + \frac{\varrho}{2}\|y-x\|^2$ for any $x\in\dom{\partial f}$, $y\in\dom{f}$, and $\xi\in\partial f(x)$.
		\item[\rm (iii)] $\langle \xi-\eta, x-y\rangle \geq \varrho \|x-y\|^2$ for any $x,y\in\dom{\partial f}$ and $\xi \in\partial f(x)$, $\eta\in\partial f(y)$.
	\end{itemize}
\end{lemma}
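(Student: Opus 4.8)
The plan is to prove the three statements equivalent by establishing the cyclic chain (i) $\Rightarrow$ (ii) $\Rightarrow$ (iii) $\Rightarrow$ (i), since each individual implication then reduces to a short and self-contained computation. An elegant alternative that I would keep in reserve is to introduce the shifted function $g(\cdot):=f(\cdot)-\frac{\varrho}{2}\|\cdot\|^2$ and to observe, via the algebraic identity $t\|x\|^2+(1-t)\|y\|^2-\|tx+(1-t)y\|^2=t(1-t)\|x-y\|^2$ together with the sum rule $\partial g(x)=\partial f(x)-\varrho x$, that statements (i)--(iii) for $f$ are \emph{exactly} the three classical characterizations of ordinary convexity for $g$; this route reduces the lemma to its $\varrho=0$ case. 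I would nevertheless present the direct cyclic argument as the primary route, as it is more transparent.

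For (i) $\Rightarrow$ (ii), I would fix $x\in\dom{\partial f}$, $y\in\dom{f}$, and $\xi\in\partial f(x)$, and test the defining inequality of $\varrho$-strong convexity at the point $z_t:=(1-t)x+ty$ for $t\in(0,1)$, which lies in $\dom{f}$ by convexity of the domain. Combining $f(z_t)\le (1-t)f(x)+tf(y)-\frac{\varrho}{2}t(1-t)\|y-x\|^2$ with the subgradient inequality $f(z_t)\ge f(x)+t\langle\xi,y-x\rangle$ (note $z_t-x=t(y-x)$), then dividing by $t$ and letting $t\downarrow 0$, yields (ii). The implication (ii) $\Rightarrow$ (iii) is even shorter: I would write (ii) once from $x$ to $y$ and once from $y$ to $x$ (legitimate since $x,y\in\dom{\partial f}$), add the two inequalities so that the function values cancel, and read off $\langle\xi-\eta,x-y\rangle\ge\varrho\|x-y\|^2$.

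The main obstacle is the closing implication (iii) $\Rightarrow$ (i), which I would handle in two stages. First I would recover (ii): restricting $f$ to the segment through $x\in\dom{\partial f}$ and $y\in\dom{f}$ via $\psi(t):=f(x+t(y-x))$ gives a finite convex function of one variable, hence absolutely continuous on the interior with $\psi(1)-\psi(0)=\int_0^1\psi'(t)\,dt$; for a.e.\ $t$ one has $\psi'(t)=\langle\xi_t,y-x\rangle$ with $\xi_t\in\partial f(x_t)$, $x_t:=x+t(y-x)$, and applying (iii) to the pair $(x_t,x)$ yields $\langle\xi_t,y-x\rangle\ge\langle\xi,y-x\rangle+\varrho\,t\|y-x\|^2$, so integration over $[0,1]$ produces (ii). Second, I would upgrade (ii) to (i) by an averaging step: for $z:=(1-t)x+ty$ and $\zeta\in\partial f(z)$, writing (ii) from $z$ to $x$ and from $z$ to $y$ and taking the $(1-t),t$-weighted sum makes the linear terms vanish, while $\|x-z\|^2=t^2\|x-y\|^2$ and $\|y-z\|^2=(1-t)^2\|x-y\|^2$ combine into the quadratic coefficient $t(1-t)\|x-y\|^2$, which is precisely (i).

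The delicate points I would need to manage carefully all concern the nonemptiness of the subdifferential at interior segment points, namely $\partial f(x_t)\neq\emptyset$ in the integration and $\partial f(z)\neq\emptyset$ in the averaging, together with the behavior of $\psi$ at the endpoints of $[0,1]$. I would secure these by first proving the inequalities for points in the relative interior of $\dom{f}$, where the subdifferential is guaranteed nonempty, and then extending to general $x,y\in\dom{f}$ by a limiting argument that exploits the closedness (lower semicontinuity) of $f$. These are the only genuinely technical steps; everything else is elementary algebra, which is exactly why I expect (iii) $\Rightarrow$ (i) to be the crux.
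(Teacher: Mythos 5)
The paper does not actually prove Lemma~\ref{lem:strong}; it is quoted verbatim from \cite[Theorem~5.24]{Beck17}, so there is no in-paper argument to compare against. Judged on its own terms, your proposal is correct and follows the standard route. The implications (i)~$\Rightarrow$~(ii) (test strong convexity at $z_t=(1-t)x+ty$, combine with the subgradient inequality, divide by $t$ and let $t\downarrow 0$) and (ii)~$\Rightarrow$~(iii) (write (ii) twice with the roles of $x$ and $y$ exchanged and add) are exactly right, and you correctly identify (iii)~$\Rightarrow$~(i) as the crux. Your reserve route via $g:=f-\frac{\varrho}{2}\|\cdot\|^2$ is in fact the one Beck's text is organized around, so either path is consistent with the cited source.

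Two refinements on the closing implication. First, as you anticipate, the claim that for a.e.\ $t$ one has $\psi'(t)=\langle\xi_t,y-x\rangle$ with $\xi_t\in\partial f(x_t)$ is \emph{not} automatic for arbitrary $x\in\dom{\partial f}$, $y\in\dom{f}$: the restriction of $f$ to a boundary segment of $\dom{f}$ can be smooth while $\partial f(x_t)=\emptyset$ all along it, so the integration step genuinely requires $x_t\in\mathrm{ri}(\dom{f})$, where the directional-derivative formula $\psi_+'(t)=\max_{\xi\in\partial f(x_t)}\langle\xi,y-x\rangle$ is available. Second, the cleanest way to organize the relative-interior reduction is to run both stages (integration and averaging) only for $x,y\in\mathrm{ri}(\dom{f})$ --- where every segment point stays in $\mathrm{ri}(\dom{f})$ --- thereby obtaining the inequality of (i) on $\mathrm{ri}(\dom{f})$, and then to pass to general $x,y\in\dom{f}$ directly at the level of (i): with $x_0\in\mathrm{ri}(\dom{f})$ and $x^s:=(1-s)x+sx_0$, $y^s:=(1-s)y+sx_0$, convexity gives $\limsup_{s\downarrow 0}f(x^s)\le f(x)$ while closedness gives the matching $\liminf$, so $f(x^s)\to f(x)$, $f(y^s)\to f(y)$, and lower semicontinuity at $(1-t)x^s+ty^s$ yields (i) in the limit. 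This avoids having to extend the intermediate statement (ii) beyond the relative interior, which is unnecessary anyway since (ii) in full generality is recovered from the already-established implication (i)~$\Rightarrow$~(ii) once the cycle closes. With these points made explicit, the proof is complete.
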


The proximal operator of $f$ (see \cite{Mor62,PB13}), denoted by $\prox_{f}(\cdot)$, is given by
\begin{equation*}
	\prox_{f}(a) = \arg\min_{x \in \R^n } \left\{f(x) + \frac{1}{2}\|x-a\|^2 \right\},\quad a\in\R^n.
\end{equation*}
Particularly, if $f(\cdot)$ is the indicator function $\delta_\X(\cdot)$ associated with the nonempty convex set $\X$, i.e.,
\begin{equation*}
	\delta_\X(x)=\left\{ \begin{array}{ll}
		0, &\;\; \text{if}\;x\in \X, \\ +\infty, &\;\; \text{otherwise},
	\end{array}\right.
\end{equation*}
then the proximal operator $\prox_{f}(\cdot)$ immediately reduces to the projection operator, i.e., $\prox_f(\cdot) \equiv \Pi_\X(\cdot)$.
Let $\Omega$ be a nonempty closed convex set of $\R^n $, we define
$$\dist_M(x,\Omega):=\min\;\left\{\|x-z\|_M\;|\;z \in \Omega\right\}$$
as the distance from any $x \in \R^n $ to the set $\Omega$ in the sense of matrix norm, where $M$ is a given symmetric and
positive definite matrix. In particular,  when $M$ is an identity matrix, we use
$\dist(x,\Omega)$ to denote the Euclidean distance from any $x$ to the set $\Omega$ for simplicity.

Given a proper closed strictly convex function $\phi(\cdot) : \R^n \to (-\infty,+\infty]$, finite at $x$, $y$ and differentiable at $y$, the Bregman distance \cite{Bre66} between $x$ and $y$ associated with the kernel function $\phi$ is defined as
\begin{equation*}
	\B_{\phi}(x,y) = \phi(x) - \phi(y) - \langle \nabla \phi(y),x-y \rangle,
\end{equation*}
where $\nabla \phi(y)$ represents the gradient of $\phi$ at point $y$. It is not difficult to see that the Bregman distance covers the standard Euclidean distance as its special case when $\phi(\cdot)=\frac{1}{2}\|\cdot\|^2$. Here, we summarize three widely used Bregman distances in Table \ref{Tab_Bregman}. However, the Bregman distance does not always share the symmetry and the triangle inequality property with the Euclidean distance.
\begin{table}[htbp]
	\caption{Three popular Bregman distances.}\label{Tab_Bregman}
	\centering
	\renewcommand\arraystretch{1.5}
	\begin{tabular*}{\textwidth}{@{\extracolsep{\fill}}cll}	\toprule
		Type & Kernel function $\phi(\cdot)$ & Bregman distance $\B_\phi(x,y)$ \\ \midrule
		I & $\frac{1}{2}\|z\|^2$ & $\frac{1}{2}\|x-y\|^2$, \; $\forall x,y\in\R^n$  \\
		II & $\frac{1}{2}\|z\|_M^2$ with $M\succ 0$& $\frac{1}{2}\|x-y\|_M^2$, \; $\forall x,y\in\R^n$ \\
		III & $\sum_{i=1}^n z_i \log z_i$ & $\sum_{i=1}^n x_i \log \frac{x_i}{y_i} + y_i-x_i, \quad \forall x\in\R^n_+,\;\forall y\in\R^n_{++}$\\ \bottomrule
	\end{tabular*}	
\end{table}

Below, we summarize some properties of Bregman distance \cite{Beck17}.

\begin{lemma}\label{lem:Bregman}
	Suppose that $\Omega\subseteq \R^n$ is nonempty closed and convex, and the function $\phi(\cdot)$ is proper closed convex and differentiable over $\dom{\partial \phi}$. If $\Omega\subseteq \dom{\phi}$ and $\phi(\cdot)+\delta_{\Omega}(\cdot)$ is $\varrho$-strongly convex ($\varrho>0$), then the Bregman distance $\B_\phi(\cdot,\cdot)$ associated with $\phi(\cdot)$ has the following properties:
	\begin{itemize}
		\item[\rm (i)] $\B_{\phi}(x,y)\geq \frac{\varrho}{2}\|x-y\|^2$ for all $x\in\Omega$ and $y\in\Omega\cap \dom{\partial \phi}$;
		\item[\rm (ii)] Let $x\in\Omega$ and $y\in\Omega\cap \dom{\partial \phi}$. Then $\B_{\phi}(x,y)\geq 0$, and in particular, the equality holds if and only if $x=y$;
		\item[\rm (iii)] For $a,b\in \dom{\partial \phi}$ and $c\in\dom{\phi}$, the following equality holds:
		\begin{equation*}\label{three-point}
			\langle \nabla\phi(b)-\nabla\phi(a), c-a\rangle =\B_{\phi}(c,a)+\B_{\phi}(a,b)-\B_{\phi}(c,b).
		\end{equation*}
	\end{itemize}
\end{lemma}

Below, we present the first-order optimality condition of \eqref{Problem}. The pair $(x^\star, y^\star)$ defined on $\X  \times \Y $ is called a saddle point of \eqref{Problem} if it satisfies the following inequalities
\begin{equation*}
	\L (x^\star,y) \leq \L(x^\star,y^\star) \leq \L(x,y^\star), \quad \forall\, x \in \X , \; \forall\, y \in \Y,
\end{equation*}
which can be further reformulated as a mixed variational inequality:
\begin{equation}\label{KKT}
	\left\{\begin{aligned}
		f(x)-f(x^\star) + \left\langle x-x^\star, A^\top y^\star \right\rangle &\geq 0,\quad \forall x\in \X, \\
		g(y)-g(y^\star) + \left\langle y-y^\star, -Ax^\star \right\rangle &\geq 0,\quad \forall y\in \Y,
	\end{aligned}\right.
\end{equation}
or equivalently,
\begin{subequations}\label{MVI}
	\begin{equation}\label{MVI-a}
		\varUpsilon(\u)-\varUpsilon(\u^\star) + \langle \u-\u^\star, G\u^\star\rangle \geq 0,\quad \forall \u\in\U,
	\end{equation}
	where
	\begin{equation}\label{MVI-b}
		\u=\left(\begin{array}{c}x \\ y\end{array}\right),\;\; \varUpsilon(\u)=f(x)+g(y),\;\; G=\left(\begin{array}{cc} 0 & A^\top \\ -A & 0\end{array}\right),\;\;\U=\X\times\Y.
	\end{equation}
\end{subequations}
Alternatively, it is well-known (see \cite{YH16}) that solving \eqref{MVI} amounts to finding a solution of a generalized projection equation, which is shown by the following lemma.

\begin{lemma}\label{vi-proj}
	The variational inequality problem \eqref{MVI} amounts to finding $\u^{\star}:=(x^\star,y^\star)$ such that $0\in \E(\u^\star, t)$, i.e.,
	$$\dist^2(0, \E(\u^\star,t))=0,$$
	where the set-valued mapping $\E(\u,t)$  is defined as
	\begin{equation}\label{error25}
		\E(\u,t):=\left(\begin{array}{ll}
			\E_{\X}(\u,t):=x-\Pi_{\X}\left[x-t(\partial f(x)+A^\t y ) \right]
			\\ \E_{\Y}(\u,t):=y-\Pi_{\Y}\left[y-t (\partial g(y)-Ax ) \right]
		\end{array} \right)
	\end{equation}
	with $t>0$ being an arbitrary scalar. 
\end{lemma}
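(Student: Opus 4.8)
The plan is to establish the stated equivalence by converting the variational inequality \eqref{MVI} into the projection equation one block at a time. First I would note that, since $\varUpsilon(\u)=f(x)+g(y)$ is separable and $G\u^\star=(A^\top y^\star,\,-Ax^\star)^\top$, the single inequality \eqref{MVI-a} is equivalent to the pair \eqref{KKT}: inserting the test points $\u=(x,y^\star)$ and $\u=(x^\star,y)$ into \eqref{MVI-a} produces the two inequalities of \eqref{KKT}, while adding those two inequalities recovers \eqref{MVI-a}. It therefore suffices to characterize each block of \eqref{KKT} independently.

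Next, for the $x$-block I would show that $f(x)-f(x^\star)+\langle x-x^\star,A^\top y^\star\rangle\ge 0$ for all $x\in\X$ holds if and only if there is a subgradient $\xi^\star\in\partial f(x^\star)$ with $0\in\xi^\star+A^\top y^\star+N_{\X}(x^\star)$, where $N_{\X}$ denotes the normal cone of $\X$. The ``if'' part is immediate from the subgradient inequality $f(x)-f(x^\star)\ge\langle\xi^\star,x-x^\star\rangle$ combined with the defining inequality of the normal cone; the ``only if'' part is precisely the first-order optimality condition of the constrained minimization $x^\star\in\arg\min_{x\in\X}\{f(x)+\langle Ax,y^\star\rangle\}$ that \eqref{KKT} encodes. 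The crux is then to invoke the variational characterization of the Euclidean projection, namely $w=\Pi_{\X}[z]$ if and only if $\langle z-w,v-w\rangle\le 0$ for every $v\in\X$: for any fixed $t>0$ this turns the inclusion above into the fixed-point identity $\E_{\X}(\u^\star,t)=0$ recorded in \eqref{error25}, the scalar $t$ merely rescaling the normal-cone inclusion. Running the identical argument on the $y$-block with a subgradient $\zeta^\star\in\partial g(y^\star)$ gives $\E_{\Y}(\u^\star,t)=0$, whence $0\in\E(\u^\star,t)$; reversing each step yields the converse.

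Passing between ``$0\in\E(\u^\star,t)$'' and ``$\dist^2(0,\E(\u^\star,t))=0$'' is then routine: the inclusion trivially forces the squared distance to vanish, and the forward construction exhibits explicit $\xi^\star,\zeta^\star$ realizing an actual zero member of $\E(\u^\star,t)$ rather than a mere limit point (the subdifferentials $\partial f(x^\star)$, $\partial g(y^\star)$ being closed and the projections continuous), so the two formulations genuinely coincide. I expect the only delicate point to be the ``only if'' direction of the second paragraph, where extracting the subgradient $\xi^\star$ (resp. $\zeta^\star$) that witnesses the projection identity relies on the subdifferential sum rule $\partial(f+\delta_{\X})=\partial f+N_{\X}$, and hence on a constraint qualification such as $\mathrm{ri}(\dom{f})\cap\mathrm{ri}(\X)\neq\emptyset$; I would either verify this from the standing assumptions on \eqref{Problem} or fold $\X$ into $\dom{f}$ from the outset. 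Everything else is bookkeeping.
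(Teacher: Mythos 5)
The paper does not actually prove this lemma---it is stated as a known fact and attributed to \cite{YH16}---so there is no in-paper argument to compare against; your proposal supplies the standard self-contained proof, and it is correct. The block decomposition of \eqref{MVI-a} into \eqref{KKT} (test with $\u=(x,y^\star)$ and $\u=(x^\star,y)$, then add), the identification of each block inequality with a normal-cone inclusion, and the variational characterization $w=\Pi_{\X}[z]$ if and only if $\langle z-w,v-w\rangle\le 0$ for all $v\in\X$ are exactly the ingredients needed, and you are right that the scalar $t>0$ merely rescales the inclusion. Two remarks. First, you correctly isolate the one genuinely delicate step: extracting a witness $\xi^\star\in\partial f(x^\star)$ from the minimality of $x^\star$ for $f(\cdot)+\langle A\cdot,y^\star\rangle+\delta_{\X}(\cdot)$ requires the sum rule $\partial(f+\delta_{\X})=\partial f+N_{\X}$, hence a constraint qualification; the paper (and \cite{YH16}) assumes this silently, so flagging it is appropriate rather than a defect. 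Second, note that the signs printed in \eqref{error25}, namely $\xi-A^\top y$ and $\zeta+Ax$, are inconsistent with \eqref{KKT} and with the way the residual is actually used later in \eqref{eq424}--\eqref{eq425}, where it is built from $\xi+A^\top y$ and $\zeta-Ax$; your argument establishes the latter (correct) version, which is the one the linear-convergence analysis relies on, so the discrepancy is a typo in the statement rather than a gap in your proof.
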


Throughout this paper, we let $\U^\star$ be the solution set of \eqref{MVI}, which is assumed to be nonempty. Clearly,
it follows from Lemma \ref{vi-proj} that
$$\U^\star=\{\u^\star ~|~ \dist(0,\E(\u^\star,t))=0\}.$$
Notice that our convergence rate analysis under the error bound condition is based on the variational inequality characterization \eqref{KKT} and the related theory of variational inequalities.

\section{Algorithm and Convergence Properties}\label{Sec3}
In this section, we first present the algorithmic framework for \eqref{Problem} and show that our algorithm has a nice convergence behavior through a toy example. Then, we prove that our algorithm is globally convergent and has a linear convergence rate under some standard conditions (see \eqref{condition} or more details in Remark \ref{rm1}) used in \cite{CP11}.

\subsection{Algorithmic framework}
Considering the possibly unbalanced complexity of primal and dual subproblems in some cases, we are motivated to update the primal variable between the twice calculations of the dual variable so that updating the dual variable in a symmetric way. The extra calculation of the dual variable accordingly balances the speed of both variables approaching to their optimal solutions, which can also be regarded as one stabilization on the dual variable.  Our algorithmic framework is described formally in Algorithm \ref{alg1}.

\begin{algorithm}[!htbp]
	\caption{The Symmetric Primal-Dual Algorithmic Framework for \eqref{Problem}.}\label{alg1}
	\begin{algorithmic}[1]
		\STATE Choose starting points $x^{0} \in \X,y^{0} \in \Y$ and parameters $\gamma>0$ and $\mu>0$.
		\REPEAT
		\STATE Update $x$ and $y$ via
		\begin{align}
			&\tilde{y}^{k+1} = \arg\max_{y\in \mathcal{Y}}\left\{ -g(y) + \langle Ax^k, y\rangle - \gamma \B_{\phi}(y,y^{k}) \right\}, \label{SPIDA-a}\\
			&x^{k+1} = \arg\min_{x\in \mathcal{X}}\left\{ f(x) + \langle Ax, \tilde{y}^{k+1} \rangle + \mu \B_{\psi}(x,x^{k}) \right\} \label{SPIDA-b},  \\
			&y^{k+1} = \arg\max_{y\in \mathcal{Y}}\left\{ -g(y) + \langle Ax^{k+1}, y\rangle - \gamma  \B_{\phi}(y,y^{k}) \right\}, \label{SPIDA-c}
		\end{align}
		\UNTIL some stopping criterion is satisfied.
		\RETURN an approximate saddle point $(\hat{x},\hat{y})$.
	\end{algorithmic}
\end{algorithm}

\begin{remark}\label{remark-one}
	It is noteworthy that the embedded Bregman proximal regularization terms (i.e., $\B_{\psi}(x,x^k)$ and $\B_{\phi}(y,y^k)$) make our Algorithm \ref{alg1} versatile in the sense that we can choose appropriate Bregman kernel functions as listed in Table \ref{Tab_Bregman} to interpret some state-of-the-art first-order optimization solvers (see Section \ref{Sec-LCM}), or design customized variants for some real-world problems (see Section \ref{Sec5}).
\end{remark}

\begin{figure}[!htb]
	\centering
	\subfigure[AHPD ($\mu=\gamma=1$)]{
		\begin{minipage}[t]{0.32\linewidth}
			\centering
			\includegraphics[width=1\textwidth]{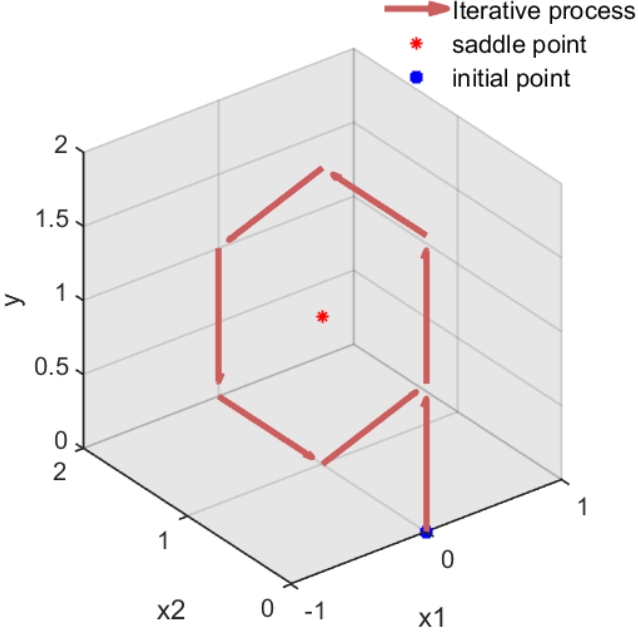}
		\end{minipage}%
	}%
	\subfigure[PDHG ($\mu=\gamma=1$)]{
		\begin{minipage}[t]{0.32\linewidth}
			\centering
			\includegraphics[width=1\textwidth]{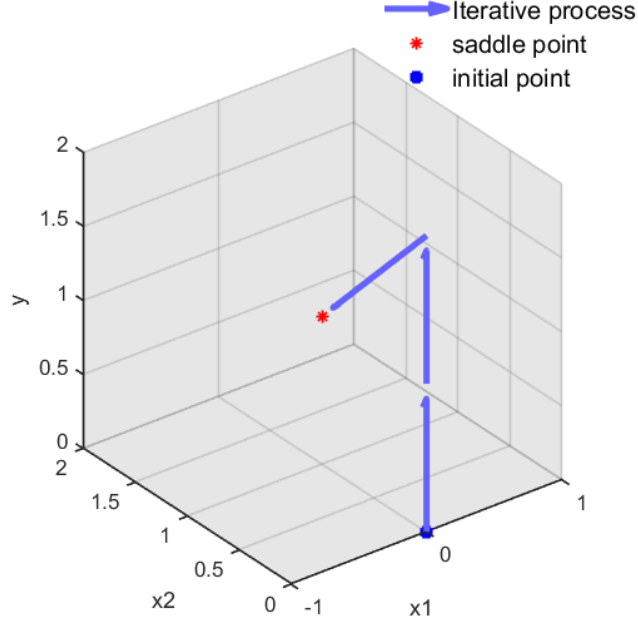}
		\end{minipage}%
	}%
	\subfigure[Algorithm \ref{alg1} ($\mu=\gamma=1$)]{
		\begin{minipage}[t]{0.32\linewidth}
			\centering
			\includegraphics[width=1\textwidth]{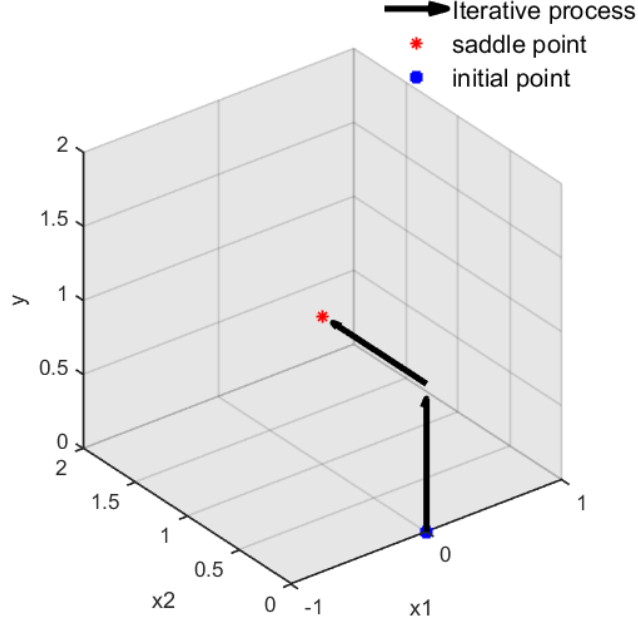}
		\end{minipage}
	}%
	
	\subfigure[AHPD ($\mu=\gamma=\sqrt{2}$)]{
		\begin{minipage}[t]{0.32\linewidth}
			\centering
			\includegraphics[width=1\textwidth]{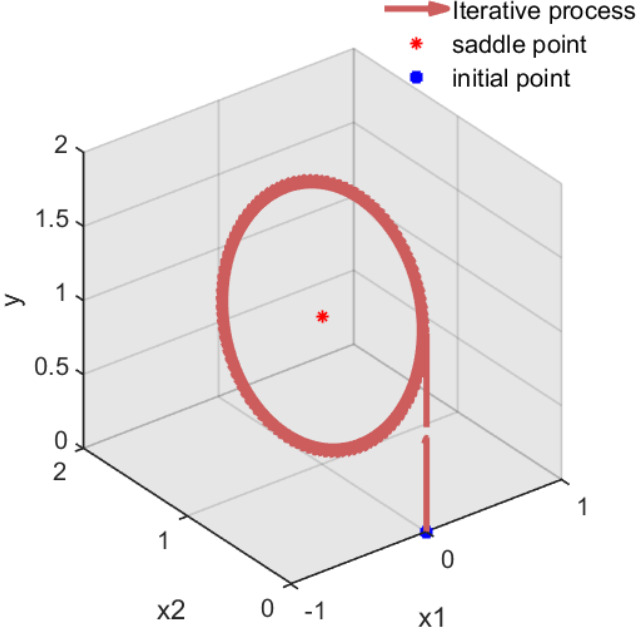}
		\end{minipage}%
	}%
	\subfigure[PDHG ($\mu=\gamma=\sqrt{2}$)]{
		\begin{minipage}[t]{0.32\linewidth}
			\centering
			\includegraphics[width=1\textwidth]{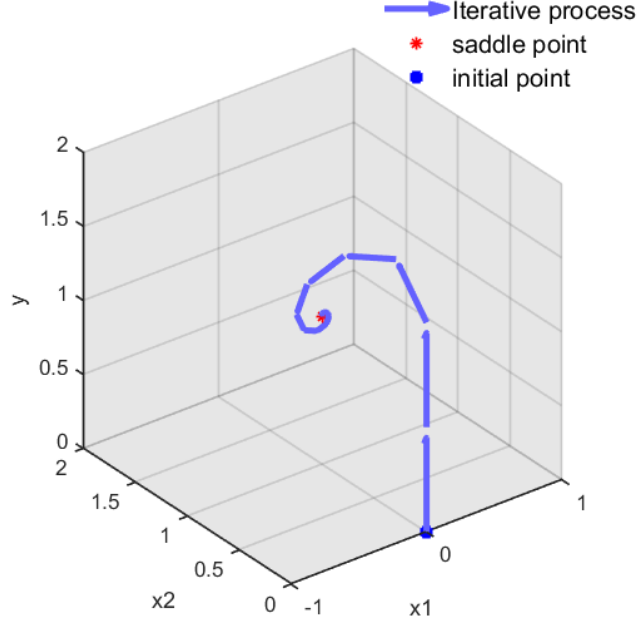}
		\end{minipage}%
	}%
	\subfigure[Algorithm \ref{alg1} ($\mu=\gamma=\sqrt{2}$)]{
		\begin{minipage}[t]{0.32\linewidth}
			\centering
			\includegraphics[width=1\textwidth]{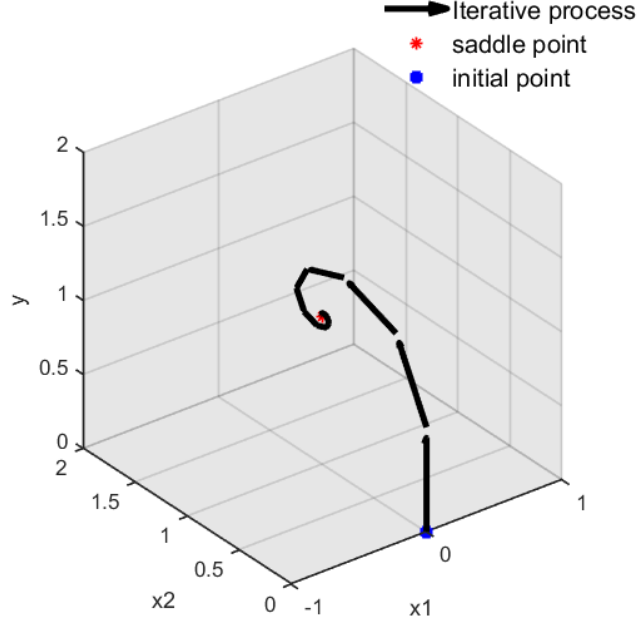}
		\end{minipage}
	}%
	\caption{Illustration of the convergence behaviors of AHPD, PDHG and Algorithm \ref{alg1} for the toy example \eqref{Problem2} with setting different parameters $\mu$ and $\gamma$.}
	\label{figure}
\end{figure}

\begin{remark}\label{remark-two}
	We employ a toy example used in \cite{HXY22} to show that our Algorithm \ref{alg1} enjoys a nice convergence behavior. Consider the following linear programming:
	\begin{equation}\label{Problem2}
		\min_{x_{1},x_{2}} \left\{\;  2x_{1} + x_{2} \;|\; x_{1} +x_{2} =1,\; x_{1} \ge 0 , \; x_{2} \ge 0\; \right\},
	\end{equation}
	which has a unique solution $(x_{1}^{*},x_{2}^{*})=(0,1)$. Moreover, the dual problem of \eqref{Problem2} is
	\begin{equation*}
		\max_{y} \left\{\;  y\;|\; y \le 1,\; y \le 2\; \right\}
	\end{equation*}
	and its optimal solution is $y^{*}=1$. Accordingly, we reformulate \eqref{Problem2} as a standard form of saddle point problems, i.e.,
	\begin{equation}\label{Linear-saddle}
		\min_{x_{1}\geq 0,x_{2}\geq 0} \max_y\left\{\;  2x_{1} + x_{2} -y\left( x_{1} +x_{2} -1\right)\; \right\}.
	\end{equation}
	Applying Algorithm \ref{alg1} to \eqref{Linear-saddle} by setting the Bregman kernel functions as the first type listed in Table \ref{Tab_Bregman}, the iterative scheme is immediately specified as
	\begin{subnumcases}{\label{f5}}
		\tilde{y}^{k+1} =y^{k} - \frac{1}{\gamma}\left( x^{k}_{1} + x^{k}_{2} -1   \right), \nonumber \\
		x^{k+1}_{1} = \max \left\{ \left(- \frac{2}{\mu} + \frac{1}{\mu}\tilde{y}^{k+1} +x_{1}^{k}  \right),0  \right\},  \nonumber \\
		x^{k+1}_{2} = \max \left\{ \left(- \frac{1}{\mu} + \frac{1}{\mu}\tilde{y}^{k+1} +x_{2}^{k}  \right),0  \right\},   \nonumber \\
		y^{k+1} = y^{k} - \frac{1}{\gamma}\left( x^{k+1}_{1} + x^{k+1}_{2} -1   \right). \nonumber
	\end{subnumcases}
	Also, we implement the AHPD method \eqref{AHPD} and the PDHG \eqref{fopda} with $\tau=1$ to solve \eqref{Linear-saddle}. Here, we take $(x^{0}_{1} , x^{0}_{2} , y^{0}) = (0,0,0)$ as starting points and plot trajectories of the sequences generated by the three algorithms in Fig. \ref{figure} for the cases where $\mu= \gamma = 1 $ or $\mu=\gamma = \sqrt{2}$, respectively. It can be easily seen from Fig. \ref{figure} that the sequence generated by our Algorithm \ref{alg1} converges slightly faster than PDHG to the unique optimal solution, while the AHPD method generates a cyclic sequence. Such a toy example tells us that our Algorithm \ref{alg1} possibly has superiority over some existing state-or-the-art primal-dual algorithms on some saddle point problems.
\end{remark}

\begin{remark}\label{remark-four}
	When assuming that both $f(x)$ and $g(y)$ are differentiable, the saddle point problem \eqref{Problem} can be also reformulated as a standard variational inequality problem (VIP), i.e., finding a point $\u^*\in\U$ such that
	\begin{equation*}\label{VIP}
		\langle \u-\u^*, \nabla\L(\u^*)\rangle \geq 0,\quad \forall \u\in\U,
	\end{equation*}
	where $\u$ and $\U$ are given in \eqref{MVI-b}, and $\nabla \L(\u)$ is specified as
	\begin{equation*}
		\nabla \L(\u)=\left(\begin{array}{c}
			\nabla_{x}\L(x,y) \\ -\nabla_{y}\L(x,y)
		\end{array}\right)=\left(\begin{array}{c}
			\nabla f(x)+A^\top y \\ \nabla g(y)-Ax
		\end{array}\right).
	\end{equation*}
	Consequently, we can gainfully employ some efficient algorithms, such as the projection methods and extragradient methods (e.g., \cite{Kor76,Nem04,Pop80,Zhang22}) tailored for VIPs to deal with those saddle point problems with differentiable objectives. However, many real-world problems do not necessarily satisfy the Lipschitz continuity of $\nabla \L(\u)$ required in convergence analysis. Besides, the aforementioned VIP-type methods such as the well-known extragradient methods (e.g., \cite{Kor76,Nem04,Pop80,Zhang22}) treat both $x$ and $y$ as a single entity, thereby potentially ignoring the distinct properties associated with each individual variable. As a consequence, when implementing the aforementioned extragradient methods, we must update $x$ and $y$ simultaneously, thereby reducing the implementability of extragradient methods for \eqref{Problem}. Comparatively, our Algorithm \ref{alg1} does not require the Lipschitz continuity of $\nabla \L(\u)$. More promisingly, our method is able to maximally exploit the structure of \eqref{Problem} so that each subproblem is easily implemented for many real-world problems (see Section \ref{Sec5}).
\end{remark}

\begin{remark}\label{remark-three}
	After we finished this manuscript, we were brought to the so-named Alternating Extragradient Method (AEM) introduced by Bonettini and Ruggiero \cite{BR11,BR14} and the Primal-Dual Fixed Point algorithm (PDFP) proposed by Chen et al. \cite{CHZ16}. Firstly, under the same differentiability  requirement on $\L(x,y)$, the AEM is an improved variant of the classical extragradient methods \cite{Kor76,Nem04,Pop80}, where the respective structure associated with $x$ and $y$ can be efficiently explored. Moreover, the AEM enjoys an effective adaptive stepsize for primal and dual subproblems. However, the Lipschitz continuity assumption for the AEM would possibly preclude its applicability to some nonsmooth real-world problems. Comparatively, although our Algorithm \ref{alg1} shares the similar symmetric spirit to update the primal and dual variables with the AEM, our Algorithm \ref{alg1} is easily applicable to nonsmooth saddle point problems with a bilinear coupling term. Moreover, as a theoretical complement, we not only prove the global convergence of Algorithm \ref{alg1}, but also estimate the linear convergence rate under the generalized error bound defined by Bregman distance. Secondly, when comparing with the PDFP in \cite{CHZ16}, we can see that PDFP shares a highly similar iterative scheme with our SPIDA by computing the $y$-subproblem twice at each iteration. In particular, when the proximal term in the $x$-subproblem \eqref{SPIDA-b} of SPIDA is chosen as the
		Euclidean norm (i.e., $\B_{\psi}(x,x^{k})=\frac{1}{2}\|x-x^k\|^2$), the iterative form for the $x$-subproblem coincides with the one of PDFP except the appearance of proximal parameter $\mu$. On the other hand, our SPIDA has versatile Bregman proximal terms making it flexible for many structured optimization problems. From this point, our SPIDA is more general than the PDFP.
\end{remark}

\subsection{Global convergence}
In this subsection, we will prove the global convergence of Algorithm \ref{alg1} under some standard conditions used in the primal-dual literature.

Note that both subproblems \eqref{SPIDA-a} and \eqref{SPIDA-c} share a similar form, so we denote
$$\by_w=\arg\max_{y\in\Y}\left\{-g(y)+\langle Aw,y\rangle -\gamma \B_{\phi}(y,y^k)\right\}$$
with some given $w\in\R^m$ for convenience, which can also be rewritten as
\begin{equation}\label{y-sub}
	\by_w=\arg\min_{y\in\Y}\left\{g(y)-\langle Aw,y\rangle +\gamma \phi(y) -\gamma \langle \nabla \phi(y^k),y-y^k\rangle\right\}.
\end{equation}
Hereafter, we begin our analysis with the following lemma.

\begin{lemma}\label{lemma1}
	Suppose that the Bregman kernel function $\phi(\cdot)$ is $\varrho$-strongly convex. Let $\by_u$ and $\by_w$ be solutions of \eqref{y-sub} for some given $u$ and $w$, respectively. Then, the following inequality
	\begin{align}\label{eq3}
		g(\by_u)-g(y) +\langle \by_u-y,-Aw \rangle \le
		&\;\gamma \B_{\phi}(y,y^k)- \gamma \B_{\phi}(y,\by_w)- \gamma \B_{\phi}(\by_u,y^k)  \nonumber\\
		&\; +  \frac{\alpha}{2} \| A^\top A \|  \|  w - u \|^{2} - \left(\gamma- \frac{1}{\alpha \varrho}\right)\B_{\phi}(\by_w,\by_u)  
	\end{align}
	holds for all $y\in\Y$, where $\alpha$ is a positive constant.
\end{lemma}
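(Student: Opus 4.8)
The plan is to exploit the two variational optimality conditions of subproblem \eqref{y-sub} at $\by_u$ and $\by_w$, combine them through the three-point identity of Lemma \ref{lem:Bregman}(iii), and close the estimate with a single Young's inequality. First I would record the first-order characterizations of \eqref{y-sub}: since $\by_u$ minimizes $g(y)-\langle Au,y\rangle+\gamma\phi(y)-\gamma\langle\nabla\phi(y^k),y-y^k\rangle$ over $\Y$, for every $y\in\Y$ one has $g(y)-g(\by_u)+\langle -Au+\gamma\nabla\phi(\by_u)-\gamma\nabla\phi(y^k),\,y-\by_u\rangle\ge 0$, and likewise for $\by_w$ with $u$ replaced by $w$. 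The key device is to split the target residual as $Q:=g(\by_u)-g(y)+\langle\by_u-y,-Aw\rangle=\big[g(\by_w)-g(y)+\langle\by_w-y,-Aw\rangle\big]+E$, where $E:=g(\by_u)-g(\by_w)+\langle\by_u-\by_w,-Aw\rangle$. Applying the optimality of $\by_w$ at the test point $y$ and then the three-point identity (with $c=y$, $a=\by_w$, $b=y^k$) bounds the first bracket by $\gamma[\B_\phi(y,y^k)-\B_\phi(y,\by_w)-\B_\phi(\by_w,y^k)]$, which already supplies the $-\gamma\B_\phi(y,\by_w)$ term demanded by \eqref{eq3}.

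Next I would estimate $E$ using the optimality of $\by_u$ at the test point $\by_w$; after the $-Au$ and $-Aw$ inner products telescope, this leaves $E\le\langle A(w-u),\by_w-\by_u\rangle+\gamma\langle\nabla\phi(\by_u)-\nabla\phi(y^k),\by_w-\by_u\rangle$. A second use of the three-point identity (now with $c=\by_w$, $a=\by_u$, $b=y^k$) rewrites the last term as $-\gamma\B_\phi(\by_w,\by_u)-\gamma\B_\phi(\by_u,y^k)+\gamma\B_\phi(\by_w,y^k)$. The pleasant point is that this $+\gamma\B_\phi(\by_w,y^k)$ exactly cancels the $-\gamma\B_\phi(\by_w,y^k)$ produced above, so summing the two bounds yields $Q\le\gamma\B_\phi(y,y^k)-\gamma\B_\phi(y,\by_w)-\gamma\B_\phi(\by_u,y^k)+\langle A(w-u),\by_w-\by_u\rangle-\gamma\B_\phi(\by_w,\by_u)$, in which every Bregman term matches \eqref{eq3} except the stray cross term.

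It then remains only to dominate $\langle A(w-u),\by_w-\by_u\rangle$. Here I would use Cauchy--Schwarz followed by Young's inequality with the free parameter $\alpha>0$, namely $\langle A(w-u),\by_w-\by_u\rangle\le\frac{\alpha}{2}\|A(w-u)\|^2+\frac{1}{2\alpha}\|\by_w-\by_u\|^2$, bound $\|A(w-u)\|^2\le\|A^\top A\|\,\|w-u\|^2$, and convert the two squared norms into Bregman distances via the $\varrho$-strong convexity of $\phi$ (Lemma \ref{lem:Bregman}(i)), i.e. $\|w-u\|^2\le\frac{2}{\varrho}\B_\phi(w,u)$ and $\|\by_w-\by_u\|^2\le\frac{2}{\varrho}\B_\phi(\by_w,\by_u)$. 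This produces precisely $\frac{\alpha\|A^\top A\|}{\varrho}\B_\phi(w,u)+\frac{1}{\alpha\varrho}\B_\phi(\by_w,\by_u)$, and merging the second piece with the pending $-\gamma\B_\phi(\by_w,\by_u)$ gives the coefficient $-(\gamma-\frac{1}{\alpha\varrho})$, which is \eqref{eq3}. I expect the main obstacle to be the Bregman bookkeeping rather than any single hard estimate: because $\B_\phi$ is neither symmetric nor subadditive, the arguments of each three-point identity must be chosen so that the $\B_\phi(\by_w,y^k)$ contributions cancel and the surviving distances carry the correct orientation, and the strong-convexity conversion must be invoked explicitly for the $(w,u)$ pair as well as for $(\by_w,\by_u)$.
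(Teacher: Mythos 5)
Your proof is correct and follows essentially the same route as the paper's: both arguments rest on the two optimality inequalities of \eqref{y-sub} tested at the cross points ($y$ and $\by_u$ for $\by_w$, and $\by_w$ for $\by_u$), the same Young inequality with the free parameter $\alpha$, and the same strong-convexity conversion of $\|w-u\|^2$ and $\|\by_w-\by_u\|^2$ into Bregman distances. The only difference is bookkeeping: you split the target quantity additively and invoke the three-point identity \eqref{three-point} twice so that the $\B_{\phi}(\by_w,y^k)$ contributions cancel, whereas the paper reaches the identical chain of estimates by expanding the Bregman distances from their definition around the auxiliary quantity $\Phi(y^k,\by_u,\by_w)$.
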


\begin{proof}
	Note that $\by_u$ is a minimizer of \eqref{y-sub} for given $u\in\R^m$. It then follows from the first-order optimality condition of \eqref{y-sub} that
	\begin{equation}\label{lem-ineq1}
		g(\by_u)-g(y)  \le \langle y - \by_u,-Au + \gamma\nabla \phi(\by_u) -\gamma\nabla \phi(y^k)\rangle,\;\; \forall y\in  \Y.
	\end{equation}
	Consequently, by using the arbitrariness of $y$ in \eqref{lem-ineq1} with setting $y=\by_w$, we have
	\begin{equation}\label{eq5}
		g(\by_u)-g(\by_w)  \le \langle \by_w - \by_u,-Au + \gamma\nabla \phi(\by_u) -\gamma\nabla \phi(y^k)\rangle.
	\end{equation}
	Similarly, by the definition of $\by_w$, it then follows from the first-order optimality condition of \eqref{y-sub} that
	\begin{equation}\label{eq4}
		g(\by_w)-g(y)  \le \langle y - \by_w,-Aw + \gamma\nabla \phi(\by_w) -\gamma\nabla\phi(y^k) \rangle, \;\;\forall y\in  \Y.
	\end{equation}
	Setting $y=\by_u$ in \eqref{eq4} arrives at
	\begin{equation}
		g(\by_w)-g(\by_u)  \le \langle \by_u - \by_w,-Aw +\gamma \nabla \phi(\by_w) -\gamma\nabla \phi(y^k)\rangle.
	\end{equation}
	By invoking the definition of Bregman distance, we have
	\begin{align}\label{eq16}
		&\gamma\B_{\phi}(y,y^k) - \gamma\B_{\phi}(y,\by_w)  \nonumber\\
		&=\gamma\phi(\by_w) - \gamma\phi(y^k) + \gamma\langle\nabla \phi(\by_w),y-\by_w \rangle - \gamma\langle \nabla \phi(y^k),y-y^k \rangle \nonumber \\
		&=\gamma\phi(\by_w) - \gamma\phi(y^k) + \langle Aw,y-\by_w \rangle + \gamma\langle \nabla \phi(y^k),y^k-\by_w \rangle  \nonumber \\
		&\hskip3.5cm+\langle \gamma\nabla \phi(\by_w) - \gamma\nabla \phi(y^k)- Aw,y-\by_w \rangle.  
	\end{align}
	Using inequality \eqref{eq4} instead of the last term of \eqref{eq16} leads to
	\begin{align}\label{dif_Breg}
		&\gamma\B_{\phi}(y,y^k) - \gamma\B_{\phi}(y,\by_w)  \nonumber \\
		&\geq \gamma\phi(\by_w) - \gamma\phi(y^k) + \langle Aw,y-\by_w \rangle + \gamma\langle \nabla \phi(y^k),y^k-\by_w \rangle + g(\by_w) - g(y) \nonumber\\
		& =\gamma \phi(\by_w) - \gamma \phi(y^k) + \langle Aw,\by_u-\by_w \rangle + \gamma\langle \nabla \phi(y^k),y^k-\by_w \rangle - \langle Aw,\by_u-y \rangle  + g(\by_w) - g(y) \nonumber \\
		&=\Phi(y^k,\by_u,\by_w) - \langle Aw,\by_u-y \rangle  + g(\by_w) - g(y),
	\end{align}
	where
	\begin{equation}\label{Phi}
		\Phi(y^k,\by_u,\by_w):=\gamma \phi(\by_w) - \gamma\phi(y^k) + \langle Aw,\by_u-\by_w \rangle + \gamma\langle \nabla \phi(y^k),y^k-\by_w \rangle.
	\end{equation}
	
	Below, we focus on $\Phi(y^k,\by_u,\by_w)$. First, we have
	\begin{align}\label{wyu}
		\langle Aw,\by_u-\by_w \rangle  & = \langle \by_u-\by_w, Aw - Au \rangle  + \langle \by_u-\by_w, Au \rangle \nonumber \\
		&=\langle \by_u-\by_w, Aw - Au \rangle +\langle \by_w- \by_u, \gamma \nabla \phi(y^k)-\gamma\nabla \phi(\by_u)\rangle \nonumber  \\
		&\hskip 3.6cm +\langle \by_w- \by_u, -Au + \gamma\nabla \phi(\by_u)-\gamma\nabla \phi(y^k)\rangle \nonumber \\
		&\geq \langle \by_u-\by_w, Aw - Au \rangle +\langle \by_w- \by_u, \gamma \nabla \phi(y^k)-\gamma\nabla \phi(\by_u)\rangle  + g(\by_u) -g(\by_w),
	\end{align}
	where the last inequality follows from \eqref{eq5}. As a result, substituting \eqref{wyu} into \eqref{Phi} immediately arrives at
	\begin{align}\label{eq8}
		\Phi(y^k,\by_u,\by_w) & =\gamma \phi(\by_w) - \gamma\phi(y^k)  + \gamma\langle \nabla \phi(y^k),y^k-\by_w \rangle+ \langle Aw,\by_u-\by_w \rangle \nonumber \\
		&\geq\gamma \phi(\by_w) - \gamma \phi(y^k)  + \gamma\langle \nabla \phi(y^k),y^k-\by_w \rangle + \langle \by_u-\by_w, Aw - Au \rangle \nonumber
		\\&\hskip1.7cm +\langle \by_w- \by_u,  \gamma\nabla \phi(y^k)-\gamma\nabla \phi(\by_u)\rangle + g(\by_u) -g(\by_w)  \nonumber\\
		&=\gamma \phi(\by_w) - \gamma \phi(y^k)   + \langle \by_u-\by_w, Aw - Au \rangle -\langle \gamma \nabla \phi(y^k),\by_u-y^k \rangle \nonumber \\
		&\hskip1.7cm -\langle \gamma \nabla \phi(\by_u),\by_w-\by_u \rangle + g(\by_u) -g(\by_w) \nonumber \\
		&=\gamma \B_{\phi}(\by_w,\by_u) + \gamma \B_{\phi}(\by_u,y^k) + \langle \by_u-\by_w,   Aw - Au \rangle +  g(\by_u) -g(\by_w),
	\end{align}
	where the last equality follows from the definition of Bregman distance. Since the Bregman kernel function $\phi(\cdot)$ is $\varrho$-strongly convex,  an application of the fact $\langle a,b\rangle \geq -\frac{1}{2\alpha}\|a\|^2 -\frac{\alpha}{2}\|b\|^2$ for all $a,b\in\mathbb{R}^n$ and $\alpha>0$ immediately yields
	\begin{align}\label{eq9}
		\langle \by_u-\by_w,   Aw - Au \rangle
		&\geq  -\frac{1}{2\alpha}\|  \by_w-\by_u \|^{2} - \frac{\alpha}{2} \| A^\top A \|    \|  w - u \|^{2}  \nonumber \\
		&\geq -\frac{1}{\alpha \varrho}\B_{\phi}(\by_w,\by_u) - \frac{\alpha}{2} \| A^\top A \|  \|  w - u \|^{2} .
	\end{align}
	
	Plugging \eqref{eq9} into \eqref{eq8}, we have
	\begin{equation*}
		\Phi(y^k,\by_u,\by_w)  \geq \left(\gamma - \frac{1}{\alpha\varrho}\right) \B_{\phi}(\by_w,\by_u) + \gamma \B_{\phi}(\by_u,y^k)-\frac{\alpha}{2} \| A^\top A \|  \|  w - u \|^{2}  +  g(\by_u) -g(\by_w),
	\end{equation*}
	which, together with \eqref{dif_Breg}, implies that
	\begin{align*}
		&\gamma\B_{\phi}(y,y^k) - \gamma\B_{\phi}(y,\by_w)  \nonumber \\
		&\geq\Phi(y^k,\by_u,\by_w) - \langle Aw,\by_u-y \rangle  + g(\by_w) - g(y) \\
		&\geq \left(\gamma - \frac{1}{\alpha\varrho}\right) \B_{\phi}(\by_w,\by_u) + \gamma \B_{\phi}(\by_u,y^k)-  \frac{\alpha}{2} \| A^\top A \|  \|  w - u \|^{2}  +  g(\by_u) - \langle Aw,\by_u-y \rangle   - g(y).
	\end{align*}
	Rearranging terms of the above inequality completes the proof.
\end{proof}

\begin{lemma}
	Let $\{(x^{k+1},\tilde y^{k+1},{y}^{k+1})\}$ be a sequence generated by Algorithm \ref{alg1}. Then, for all $x \in \mathcal{X}$ and $y \in \mathcal{Y}$, we have
	\begin{align}\label{eq11}
		\L(x^{k+1},y) - \L(x^{k+1},\tilde y^{k+1})
		&\leq  \gamma \B_{\phi}(y,y^{k})- \gamma \B_{\phi}(y,y^{k+1})  - \gamma\B_{\phi}(\tilde y^{k+1},y^{k})\nonumber \\
		&\quad +  \frac{\alpha}{2} \| A^\top A \|  \|  x^{k+1} - x^{k} \|^{2}  -\left(\gamma-\frac{1}{\alpha\varrho}\right)\B_{\phi}(y^{k+1},\tilde y^{k+1})
	\end{align}
	and
	\begin{equation}\label{eq12}
		\L(x^{k+1},\tilde y^{k+1}) - \L(x,\tilde y^{k+1}) \leq \mu\B_{\psi}(x,x^{k}) - \mu\B_{\psi}(x,x^{k+1}) - \mu\B_{\psi}(x^{k+1},x^{k}).
	\end{equation}
\end{lemma}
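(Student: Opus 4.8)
The plan is to handle the two inequalities separately: \eqref{eq11} will follow almost immediately from Lemma \ref{lemma1}, whereas \eqref{eq12} requires a short self-contained argument built from the first-order optimality of the $x$-subproblem \eqref{SPIDA-b} together with the three-point identity \eqref{three-point}.

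For \eqref{eq11}, I would first recognize that both dual updates \eqref{SPIDA-a} and \eqref{SPIDA-c} are instances of the generic subproblem \eqref{y-sub}: taking $u=x^k$ yields $\by_u=\tilde y^{k+1}$, and taking $w=x^{k+1}$ yields $\by_w=y^{k+1}$. Substituting these identifications into Lemma \ref{lemma1}, the right-hand side of \eqref{eq3} becomes verbatim the right-hand side of \eqref{eq11}. It then remains only to check that the left-hand side of \eqref{eq3}, namely $g(\by_u)-g(y)+\langle \by_u-y,-Aw\rangle$, coincides with $\L(x^{k+1},y)-\L(x^{k+1},\tilde y^{k+1})$. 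Expanding both Lagrangian values via $\L(x,y)=f(x)+\langle Ax,y\rangle-g(y)$, the $f(x^{k+1})$ terms cancel and the remainder regroups into $g(\tilde y^{k+1})-g(y)+\langle Ax^{k+1},y-\tilde y^{k+1}\rangle$, which is precisely the sought left-hand side. This disposes of \eqref{eq11} with no further computation.

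For \eqref{eq12}, I would mirror the opening step of the proof of Lemma \ref{lemma1}. Writing the $x$-subproblem \eqref{SPIDA-b} in the expanded form $\min_{x\in\X}\{f(x)+\langle A^\top\tilde y^{k+1},x\rangle+\mu\psi(x)-\mu\langle\nabla\psi(x^k),x-x^k\rangle\}$ and invoking its first-order optimality condition (exactly as \eqref{lem-ineq1} was derived for the dual subproblem) gives, for all $x\in\X$,
\begin{equation*}
f(x^{k+1})-f(x)\le\langle x-x^{k+1},\,A^\top\tilde y^{k+1}+\mu\nabla\psi(x^{k+1})-\mu\nabla\psi(x^k)\rangle.
\end{equation*}
Adding $\langle x^{k+1}-x,\,A^\top\tilde y^{k+1}\rangle$ to both sides, the two bilinear contributions cancel and the left-hand side becomes precisely $f(x^{k+1})-f(x)+\langle A(x^{k+1}-x),\tilde y^{k+1}\rangle=\L(x^{k+1},\tilde y^{k+1})-\L(x,\tilde y^{k+1})$.

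Finally, I would convert the residual inner product $\mu\langle x-x^{k+1},\nabla\psi(x^{k+1})-\nabla\psi(x^k)\rangle$ into Bregman terms using the three-point identity \eqref{three-point} of Lemma \ref{lem:Bregman}(iii) with the assignment $a=x^{k+1}$, $b=x^k$, $c=x$, which yields $\langle\nabla\psi(x^{k+1})-\nabla\psi(x^k),\,x-x^{k+1}\rangle=\B_\psi(x,x^k)-\B_\psi(x,x^{k+1})-\B_\psi(x^{k+1},x^k)$; multiplying by $\mu$ produces exactly the right-hand side of \eqref{eq12}. I do not anticipate a genuine obstacle here, since the whole argument is essentially bookkeeping. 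The only points demanding care are keeping the signs straight when transcribing the optimality condition and selecting the correct labeling $(a,b,c)$ in \eqref{three-point} so that the Bregman terms come out with the intended signs; a wrong assignment would flip the inequality and spoil the telescoping structure that \eqref{eq11} and \eqref{eq12} are designed to feed into the subsequent convergence analysis.
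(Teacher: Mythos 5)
Your proposal is correct and follows essentially the same route as the paper: \eqref{eq11} is obtained by specializing Lemma \ref{lemma1} with $\by_u=\tilde y^{k+1}$, $\by_w=y^{k+1}$, $u=x^k$, $w=x^{k+1}$ after identifying the left-hand side with $g(\tilde y^{k+1})-g(y)+\langle \tilde y^{k+1}-y,-Ax^{k+1}\rangle$, and \eqref{eq12} from the first-order optimality condition of \eqref{SPIDA-b} combined with the three-point identity \eqref{three-point} using $a=x^{k+1}$, $b=x^k$, $c=x$. The sign bookkeeping you flag as the only delicate point is handled correctly.
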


\begin{proof}
	First, it is clear from the notation in \eqref{Problem} that
	\begin{equation}\label{dif_Lxy}
		\L(x^{k+1},y) - \L(x^{k+1},\tilde y^{k+1}) =g(\tilde y^{k+1})-g(y) +\langle \tilde y^{k+1}-y,-Ax^{k+1} \rangle.
	\end{equation}
	Since $\tilde{y}^{k+1}$ and $y^{k+1}$ are solutions of \eqref{SPIDA-a} and \eqref{SPIDA-c}, respectively, it immediately follows from \eqref{dif_Lxy} and Lemma \ref{lemma1} with settings  $\by_u=\tilde y^{k+1},\by_w=y^{k+1},u=x^{k},w=x^{k+1}$ that 	
	\begin{align}\label{diff_Ly}
		\L(x^{k+1},y) - \L(x^{k+1},\tilde y^{k+1}) &=  g(\tilde y^{k+1})-g(y) +\langle \tilde y^{k+1}-y,-Ax^{k+1} \rangle  \nonumber \\
		& \leq  \gamma \B_{\phi}(y,y^{k})- \gamma \B_{\phi}(y,y^{k+1})  - \gamma\B_{\phi}(\tilde y^{k+1},y^{k})\nonumber \\
		&\quad + \frac{\alpha}{2} \| A^\top A \| \|  x^{k+1} - x^{k} \|^{2} -\left(\gamma-\frac{1}{\alpha\varrho}\right)\B_{\phi}(y^{k+1},\tilde y^{k+1}),
	\end{align}
	
	which is precisely the same as \eqref{eq11}. We proved the first assertion of this lemma.
	
	Below, we show \eqref{eq12}. It follows from the notation of $\L(x,y)$ in \eqref{Problem} that
	\begin{equation}\label{diff_Lx}
		\L(x,\tilde y^{k+1}) - \L(x^{k+1},\tilde y^{k+1})  = f(x)-f(x^{k+1})+\langle Ax-Ax^{k+1},\tilde{y}^{k+1}\rangle.
	\end{equation}
	On the other hand, the first-order optimality condition of \eqref{SPIDA-b} is
	\begin{equation}\label{x-optcond}
		f(x)-f(x^{k+1})+\left\langle x-x^{k+1},A^\top\tilde{y}^{k+1} +\mu\nabla \psi(x^{k+1})-\mu\nabla \psi(x^k)\right\rangle\geq 0,\;\; \forall x\in\X.
	\end{equation}
	Consequently, combining \eqref{diff_Lx} and \eqref{x-optcond} leads to
	\begin{align}\label{lem2-ineq2}
		\L(x^{k+1},\tilde y^{k+1}) -\L(x,\tilde y^{k+1})& = -f(x)+f(x^{k+1})-\langle Ax-Ax^{k+1},\tilde{y}^{k+1}\rangle \nonumber \\
		&\leq  \left\langle x - x^{k+1}, \mu\nabla \psi(x^{k+1})-\mu\nabla \psi(x^{k}) \right\rangle  \nonumber \\
		&=-\mu\left\langle x - x^{k+1}, \nabla \psi(x^{k})-\nabla \psi(x^{k+1}) \right\rangle
	\end{align}
	Applying the three-point property of Bregman distance \eqref{three-point} to \eqref{lem2-ineq2} with setting $a=x^{k+1}$, $b=x^k$, and $c=x$, we conclude that
	\begin{align*}
		\L(x^{k+1},\tilde y^{k+1}) - \L(x,\tilde y^{k+1})  &\le -\mu\langle x - x^{k+1}, \nabla \psi(x^{k}) -\nabla \psi(x^{k+1})\rangle \nonumber \\
		&=\mu\B_{\psi}(x,x^{k}) - \mu\B_{\psi}(x,x^{k+1}) - \mu\B_{\psi}(x^{k+1},x^{k}).
	\end{align*}
	This completes the proof of this lemma.
\end{proof}

Before presenting the global convergence theorem, we first make the following assumption.
\begin{assumption}\label{assum1}
	The Bregman kernel function $\phi(\cdot)$ associated with the $y$-subproblem is $\varrho$-strongly convex such that the proximal parameter $\gamma$ and constant $\alpha$ satisfying $\alpha \varrho\gamma \geq 1$. Moreover, the Bregman kernel function $\psi(\cdot)$ associated with the $x$-subproblem is $ \kappa $-strongly convex such that $ \kappa\mu > \alpha\|A^\top A\|$.
\end{assumption}

\begin{remark}\label{rm1}
	Note that the condition assumed in Assumption \ref{assum1} is closely related to the standard requirement in \cite{CP11}. By invoking the arbitrariness of $\alpha$ in the derivation of \eqref{eq9}, both conditions $\alpha\varrho\gamma \geq 1$ and $ \kappa\mu > \alpha\|A^\top A\|$ in Assumption \ref{assum1} imply that $\mu\gamma\kappa\varrho > \| A^\top A \|.$
	In this situation, when $\kappa=\varrho=1$, such a condition reduces to the standard requirement of the PDHG \cite{CP11}.
\end{remark}

With the above preparations, we now state our global convergence theorem.

\begin{theorem}
	Suppose that Assumption \ref{assum1} holds.	Let $\{(x^{k+1},\tilde y^{k+1},{y}^{k+1})\}$ be a sequence generated by Algorithm \ref{alg1}. Then, for all $x \in \X$, $y \in \Y$, we have
	\begin{equation*}
		\L(\widehat{x}^{N},y) - \L(x,\widehat{y}^{N})  \le \frac{1}{N} \left(\gamma\B_{\phi}(y,y^{0})+\mu\B_{\psi}(x,x^{0})\right),
	\end{equation*}
	where $N$ is a positive integer, $\widehat{x}^{N}=\frac{1}{N} \sum_{k=0}^{N-1} x^{k}$ and $\widehat{y}^{N}=\frac{1}{N} \sum_{k=0}^{N-1} \tilde y^{k}$.
\end{theorem}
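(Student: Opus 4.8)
The plan is to combine the two per-iteration estimates \eqref{eq11} and \eqref{eq12} and then telescope. Adding them makes the intermediate value $\L(x^{k+1},\tilde y^{k+1})$ cancel, so that the left-hand sides collapse to $\L(x^{k+1},y)-\L(x,\tilde y^{k+1})$ and we obtain
\begin{align*}
\L(x^{k+1},y)-\L(x,\tilde y^{k+1}) &\le \gamma\B_\phi(y,y^k)-\gamma\B_\phi(y,y^{k+1})+\mu\B_\psi(x,x^k)-\mu\B_\psi(x,x^{k+1}) \\
&\quad -\gamma\B_\phi(\tilde y^{k+1},y^k)-\Big(\gamma-\tfrac{1}{\alpha\varrho}\Big)\B_\phi(y^{k+1},\tilde y^{k+1}) \\
&\quad +\frac{\alpha\|A^\top A\|}{\varrho}\B_\phi(x^{k+1},x^k)-\mu\B_\psi(x^{k+1},x^k).
\end{align*}
The first line is already in telescoping form; the remaining three residual terms must be shown to be nonpositive, and this is exactly where Assumption \ref{assum1} enters.

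To dispose of the residual terms I would argue as follows. Since the Bregman distance is nonnegative by Lemma \ref{lem:Bregman}(ii), the term $-\gamma\B_\phi(\tilde y^{k+1},y^k)$ is nonpositive; likewise $-(\gamma-\tfrac{1}{\alpha\varrho})\B_\phi(y^{k+1},\tilde y^{k+1})\le 0$ because the condition $\alpha\varrho\gamma\ge 1$ guarantees $\gamma-\tfrac{1}{\alpha\varrho}\ge 0$. For the last pair I would invoke the convexity of $L\psi-\phi$ in the equivalent form \eqref{Lphi-varphi}, namely $\B_\phi(x^{k+1},x^k)\le L\,\B_\psi(x^{k+1},x^k)$, together with the bound $L\le \varrho\mu/(\alpha\|A^\top A\|)$, to deduce $\frac{\alpha\|A^\top A\|}{\varrho}\B_\phi(x^{k+1},x^k)\le \mu\B_\psi(x^{k+1},x^k)$, so that the final two terms combine to something nonpositive. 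Hence the whole residual is $\le 0$, leaving the clean telescoping estimate
$$\L(x^{k+1},y)-\L(x,\tilde y^{k+1})\le \gamma\B_\phi(y,y^k)-\gamma\B_\phi(y,y^{k+1})+\mu\B_\psi(x,x^k)-\mu\B_\psi(x,x^{k+1}).$$

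Summing this inequality over $k=0,1,\dots,N-1$ collapses the right-hand side to $\gamma\B_\phi(y,y^0)-\gamma\B_\phi(y,y^N)+\mu\B_\psi(x,x^0)-\mu\B_\psi(x,x^N)$, and discarding the nonnegative terminal terms $\gamma\B_\phi(y,y^N)$ and $\mu\B_\psi(x,x^N)$ gives $\sum_{k=0}^{N-1}\big[\L(x^{k+1},y)-\L(x,\tilde y^{k+1})\big]\le \gamma\B_\phi(y,y^0)+\mu\B_\psi(x,x^0)$. Finally I would pass to the ergodic averages using the convex--concave structure of $\L$: since $x\mapsto\L(x,y)$ is convex and $y\mapsto\L(x,y)$ is concave (both because the coupling $\langle Ax,y\rangle$ is bilinear and $-g$ is concave), Jensen's inequality yields $\L(\widehat x^{N},y)\le \frac1N\sum_{k}\L(x^{k+1},y)$ and $\frac1N\sum_{k}\L(x,\tilde y^{k+1})\le \L(x,\widehat y^{N})$, where $\widehat x^{N}$ and $\widehat y^{N}$ are the averages of the produced iterates. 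Subtracting and dividing by $N$ then delivers the claimed bound.

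The only genuinely delicate point is the cancellation of the residual terms in the second step: it is precisely the coupling $L\le \varrho\mu/(\alpha\|A^\top A\|)$ among the strong-convexity modulus $\varrho$, the step sizes $\mu,\gamma$, and the relative-smoothness constant $L$ that forces the $x$-proximal term $\mu\B_\psi(x^{k+1},x^k)$ to absorb the $\frac{\alpha\|A^\top A\|}{\varrho}\B_\phi(x^{k+1},x^k)$ contribution; everything after that is routine telescoping followed by a Jensen argument. I would also keep careful track of the index alignment between the averaged iterates named in the statement and the summands $x^{k+1},\tilde y^{k+1}$ that actually emerge from the telescoping, so that the definitions of $\widehat x^{N}$ and $\widehat y^{N}$ match the quantities being averaged.
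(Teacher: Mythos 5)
Your proposal is correct and follows essentially the same route as the paper's own proof: adding \eqref{eq11} and \eqref{eq12}, using the condition $L\le \varrho\mu/(\alpha\|A^\top A\|)$ from Assumption \ref{assum1} (via \eqref{Lphi-varphi}) together with $\alpha\varrho\gamma\ge 1$ to discard the residual terms, telescoping, and finishing with Jensen's inequality. The index-alignment caveat you raise at the end is well taken, but the same off-by-one between the summands $x^{k+1},\tilde y^{k+1}$ and the stated averages appears in the paper's argument as well, so your treatment matches it.
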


\begin{proof}
	Under Assumption \ref{assum1}, it is easy to check by Lemma \ref{lem:Bregman} that there exists a constant $\widehat{\beta} := (\kappa\mu - \alpha\|A^\top A\|)/(2\mu) >0$ such that
	\begin{align}\label{ineqB}
		\B_{\psi}(x^{k+1},x^{k}) - \frac{\alpha}{2\mu} \|A^\top A \| \|  x^{k+1} - x^{k} \|^{2} \ge \widehat{\beta}\|  x^{k+1} - x^{k} \|^{2} >  0.
	\end{align}
	Consequently, by adding \eqref{eq11} and \eqref{eq12}, it follows from \eqref{ineqB} and the positivity of the Bregman distance ($\alpha\varrho\gamma \ge 1$) that
	\begin{align}\label{eq13}
		&\L(x^{k+1}, y) - \L(x,\tilde y^{k+1})  \nn\\
		&\le\gamma \B_{\phi}(y,y^{k})- \gamma \B_{\phi}(y,y^{k+1})+ \mu\B_{\psi}(x,x^{k}) - \mu\B_{\psi}(x,x^{k+1}) \nonumber \\
		&\;\;-\mu\left(\B_{\psi}(x^{k+1},x^{k}) - \frac{\alpha}{2\mu} \|A^\top A \| \|  x^{k+1} - x^{k} \|^{2}\right)  -\gamma \B_{\phi}(\tilde y^{k+1},y^{k}) - \left(\gamma-\frac{1}{\alpha\varrho}\right)\B_{\phi}(y^{k+1},\tilde y^{k+1})  \nonumber \\
	 &	\le \gamma \B_{\phi}(y,y^{k})- \gamma \B_{\phi}(y,y^{k+1})+ \mu\B_{\psi}(x,x^{k}) - \mu\B_{\psi}(x,x^{k+1}) \nonumber \\
		&\;\; - \widehat{\beta} \|  x^{k+1} - x^{k} \|^{2}
		-\gamma \B_{\phi}(\tilde y^{k+1},y^{k})  - \left(\gamma-\frac{1}{\alpha \varrho}\right)\B_{\phi}(y^{k+1},\tilde y^{k+1})   \nonumber \\
		& \le\gamma \B_{\phi}(y,y^{k}) - \gamma\B_{\phi}(y,y^{k+1})+ \mu\B_{\psi}(x,x^{k}) - \mu\B_{\psi}(x,x^{k+1}).
	\end{align}
	Hence, summing up \eqref{eq13} from $k=0$ to $N-1$ leads to
	\begin{align}\label{sum-ineq}
		&\sum_{k=0}^{N-1}\left(\L(x^{k+1}, y) - \L(x,\tilde y^{k+1})\right) \\
		& \le \gamma \B_{\phi}(y,y^{0})-
		\gamma\B_{\phi}(y,y^{N})+ \mu\B_{\psi}(x,x^{0}) - \mu\B_{\psi}(x,x^{N}) \nonumber\\
		&\le  \gamma \B_{\phi}(y,y^{0})+ \mu\B_{\psi}(x,x^{0}) . \nonumber
	\end{align}
	Notice that $\L(x,y)$ and $-\L(x,y)$ are convex with respect to $x$ and $y$, respectively. It then follows from the Jensen inequality and \eqref{sum-ineq} that
	\begin{align*}
		\L(\widehat{x}^{N}, y) - \L(x, \widehat{y}^{N})
		& \le\frac{1}{N} \sum_{k=0}^{N-1}\left(\L(x^{k+1}, y) -\L(x,\tilde y^{k+1}) \right)\nonumber \\
		& \le \frac{1}{N}\left( \gamma \B_{\phi}(y,y^{0})+ \mu \B_{\psi}(x,x^{0})\right). \nonumber
	\end{align*}
	We complete the proof of this theorem.
\end{proof}

\begin{theorem}
	Suppose that Assumption \ref{assum1} holds.	The sequence $\{(x^{k+1},{y}^{k+1})\}$ generated by Algorithm \ref{alg1} converges to a saddle point of $\L(x,y)$ in \eqref{Problem}.
\end{theorem}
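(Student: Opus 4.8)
The plan is to run a Fej\'er-type monotonicity argument driven by the one-step estimate \eqref{eq13}, but retaining the negative ``gap'' terms that were discarded there. First I would instantiate the intermediate bound in \eqref{eq13} at an arbitrary saddle point $(x^\star,y^\star)\in\U^\star$. Since $(x^\star,y^\star)$ is a saddle point, $\L(x^{k+1},y^\star)\ge\L(x^\star,y^\star)\ge\L(x^\star,\tilde y^{k+1})$, so the left-hand side $\L(x^{k+1},y^\star)-\L(x^\star,\tilde y^{k+1})$ is nonnegative. Writing $\Theta_k:=\gamma\B_{\phi}(y^\star,y^{k})+\mu\B_{\psi}(x^\star,x^{k})$ and keeping the negative terms, this yields
\begin{equation*}
\Theta_{k+1}+\mu\B_{\Psi}(x^{k+1},x^{k})+\gamma\B_{\phi}(\tilde y^{k+1},y^{k})+\Big(\gamma-\tfrac{1}{\alpha\varrho}\Big)\B_{\phi}(y^{k+1},\tilde y^{k+1})\le\Theta_k.
\end{equation*}
Under Assumption \ref{assum1} every added term is nonnegative (using $\alpha\varrho\gamma\ge1$ and \eqref{ineqB}), so $\{\Theta_k\}$ is nonincreasing and bounded below by $0$; hence it converges and, by Lemma \ref{lem:Bregman}(i), both $\{x^k\}$ and $\{y^k\}$ are bounded.

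Next I would telescope the displayed inequality over $k$. Since $\Theta_0<\infty$, the three gap sequences are summable and therefore vanish: $\B_{\Psi}(x^{k+1},x^{k})\to0$, $\B_{\phi}(\tilde y^{k+1},y^{k})\to0$, and $\B_{\phi}(y^{k+1},\tilde y^{k+1})\to0$. Invoking the strong-convexity lower bound of Lemma \ref{lem:Bregman}(i) together with \eqref{Lphi-varphi} (which makes $\psi$ strongly convex and $\B_{\Psi}$ comparable to $\|\cdot\|^2$), these upgrade to the asymptotic-regularity statements $\|x^{k+1}-x^{k}\|\to0$, $\|\tilde y^{k+1}-y^{k}\|\to0$, and $\|y^{k+1}-\tilde y^{k+1}\|\to0$, whence also $\|y^{k+1}-y^{k}\|\to0$.

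By boundedness there is a subsequence with $(x^{k_j+1},y^{k_j+1})\to(x^\infty,y^\infty)\in\X\times\Y$, and asymptotic regularity forces $x^{k_j},y^{k_j},\tilde y^{k_j+1}$ to share the same limit. I would then pass to the limit in the first-order optimality conditions of the subproblems: in \eqref{x-optcond} the term $\mu\nabla\psi(x^{k+1})-\mu\nabla\psi(x^{k})\to0$, and in the analogous condition for \eqref{SPIDA-c} the term $\gamma\nabla\phi(y^{k+1})-\gamma\nabla\phi(y^{k})\to0$, by continuity of $\nabla\psi,\nabla\phi$ and the regularity estimates. Using lower semicontinuity of $f$ and $g$, the two limiting inequalities are exactly the mixed variational inequality \eqref{KKT}, so $(x^\infty,y^\infty)\in\U^\star$ is a saddle point.

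Finally, to promote subsequential convergence to convergence of the whole sequence, I would rerun the first step with the reference point taken to be this cluster point $(x^\infty,y^\infty)$. The corresponding $\Theta_k$ is again nonincreasing, hence convergent; along $\{k_j\}$ it tends to $0$ because $\B_{\phi}(y^\infty,y^{k_j})$ and $\B_{\psi}(x^\infty,x^{k_j})$ both vanish as the iterates approach $(x^\infty,y^\infty)$. Monotonicity forces the full limit to be $0$, and Lemma \ref{lem:Bregman}(i) converts $\Theta_k\to0$ into $\|x^{k}-x^\infty\|\to0$ and $\|y^{k}-y^\infty\|\to0$. The delicate point is precisely this closing step: because the Bregman distance is neither symmetric nor equipped with a triangle inequality, the usual Opial argument must be replaced by the monotone-sequence reasoning above, and one must watch the degenerate boundary cases $\alpha\varrho\gamma=1$ and $L=\varrho\mu/(\alpha\|A^\top A\|)$, where the coefficient of $\B_{\phi}(y^{k+1},\tilde y^{k+1})$ or the positivity margin of $\B_{\Psi}$ collapses, so that the asymptotic-regularity estimates rely on the strict versions of the parameter conditions.
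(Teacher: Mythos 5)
Your proposal is correct and follows essentially the same route as the paper's own proof: Fej\'er monotonicity of $\gamma\B_{\phi}(y^\star,y^{k})+\mu\B_{\psi}(x^\star,x^{k})$ from \eqref{eq13}, summability and vanishing of the retained gap terms, passage to the limit in the subproblem optimality conditions along a convergent subsequence, and re-anchoring the monotone quantity at the cluster point to upgrade subsequential to full convergence. Your closing remark about the degenerate cases $\alpha\varrho\gamma=1$ and $L=\varrho\mu/(\alpha\|A^\top A\|)$ is a valid observation that the paper itself glosses over, since its limit \eqref{eq14} implicitly needs the strict versions of these inequalities to recover $\|y^{k+1}-\tilde y^{k+1}\|\to0$ and $\|x^{k+1}-x^{k}\|\to0$.
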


\begin{proof}
	Letting $(x^{*},y^{*})$ be a saddle point of \eqref{Problem}, we immediately have
	\begin{equation}\label{saddleineq}
		\L(x^{*}, y) \le \L(x^{*}, y^{*}) \le \L(x, y^{*}), \quad \forall x\in\X,\;\forall y\in\Y.
	\end{equation}
	On the other hand, by setting $x=x^{k+1}$ and $y=\tilde{y}^{k+1}$ in \eqref{saddleineq}, it follows from \eqref{eq13} with setting $x=x^{*}$ and $y=y^{*}$ that
	\begin{align*}
		0 &\le \L(x^{k+1}, y^{*}) - \L(x^{*},\tilde y^{k+1})\\
		&\le  \gamma \B_{\phi}(y^{*},y^{k})- \gamma\B_{\phi}(y^{*},y^{k+1})  + \mu\B_{\psi}(x^{*},x^{k}) - \mu\B_{\psi}(x^{*},x^{k+1}),
	\end{align*}
	which clearly implies that
	\begin{equation}\label{noninc}
		0 \le \gamma\B_{\phi}(y^{*},y^{k+1})+\mu\B_{\psi}(x^{*},x^{k+1}) \le \gamma\B_{\phi}(y^{*},y^{k})+ \mu\B_{\psi}(x^{*},x^{k}).
	\end{equation}
	Obviously, inequality \eqref{noninc} means that the sequence $\{\gamma\B_{\phi}(y^{*},y^{k})+ \mu\B_{\psi}(x^{*},x^{k})\}$ is Bregman monotone (see \cite{BBC03}) decreasing and bounded. By the strong convexity of the Bregman distance, the sequence $\{(x^{k},y^{k}) \}$ is bounded. Therefore, there exists a subsequence $\{(x^{k_{j}},y^{k_{j}})\}$ converging to a cluster point, denoted by $(x^{\infty},y^{\infty})$. Below, we aim to show that such a cluster point is a saddle point of \eqref{Problem}.
	
	According to \eqref{eq13}, we also have
	\begin{align}\label{fejer331}
		&\gamma\B_{\phi}(\tilde y^{k+1},y^{k}) +  \left(\gamma-\frac{1}{\alpha \varrho}\right)\B_{\phi}(y^{k+1},\tilde y^{k+1}) +  \widehat{\beta} \|  x^{k+1} - x^{k} \|^{2} \nonumber	\\
		&\quad \le  \gamma\B_{\phi}(y^{*},y^{k})- \gamma\B_{\phi}(y^{*},y^{k+1}) + \mu\B_{\psi}(x^{*},x^{k})  - \mu\B_{\psi}(x^{*},x^{k+1}).
	\end{align}
	Summing the above inequality from $k=0$ to $N$, since $\alpha >0$, $\gamma>0$, $\beta>0$ and $\alpha \gamma \varrho \ge 1$ are constant, we have
	\begin{align*}
		&\sum\limits_{k=0}^{N}\left(\gamma\B_{\phi}(\tilde y^{k+1},y^{k}) + \left(\gamma-\frac{1}{\alpha \varrho}\right)\B_{\phi}(y^{k+1},\tilde y^{k+1}) +  \widehat{\beta} \|  x^{k+1} - x^{k} \|^{2} \right) \\
		&\le  \gamma\B_{\phi}(y^{*},y^{0})+ \mu\B_{\psi}(x^{*},x^{0}),
	\end{align*}
	which implies that
	\begin{equation}\label{eq14}
		\lim_{k \rightarrow \infty}\B_{\phi}(\tilde y^{k+1},y^{k}) =\lim_{k \rightarrow \infty}\B_{\phi}(y^{k+1},\tilde y^{k+1}) =
		\lim_{k \rightarrow \infty}  \|  x^{k+1} - x^{k} \|^{2} =0.
	\end{equation}
	Moreover, we conclude from \eqref{eq14} that both $\{y^k\}$ and $\{\tilde{y}^k\}$ converge to the same point. Therefore, $(x^{\infty},y^{\infty})$ is also a cluster point of the subsequence $\{(x^{k_{j}},\tilde y^{k_{j}})\}$.
	It then follows from the first-order optimality conditions of \eqref{SPIDA-b} and \eqref{SPIDA-c} that, for all $x\in\X$ and $y\in\Y$, the following inequalities hold
	\begin{equation*}
		\left\{\begin{aligned}
			&f(x)-f(x^{k+1})+\left\langle x-x^{k+1},\;A^\top\tilde{y}^{k+1} +\mu\nabla \psi(x^{k+1})-\mu\nabla \psi(x^k)\right\rangle\geq 0, \\
			&g(y)-g(y^{k+1})+\left\langle y-y^{k+1},\;-Ax^{k+1}+\gamma \nabla\phi(y^{k+1}) -\gamma \nabla\phi(y^k)\right\rangle \geq 0.
		\end{aligned}\right.
	\end{equation*}
	Therefore, for all $x\in\X$ and $y\in\Y$, the subsequence $\{(x^{k_{j}},\tilde{y}^{k_j},y^{k_{j}})\}$ satisfies
	\begin{equation}\label{subseq}
		\left\{\begin{aligned}
			&f(x)-f(x^{k_j+1})+\left\langle x-x^{k_j+1},\;A^\top\tilde{y}^{k_j+1} +\mu\nabla \psi(x^{k_j+1})-\mu\nabla \psi(x^{k_j})\right\rangle\geq 0, \\
			&g(y)-g(y^{k_j+1})+\left\langle y-y^{k_j+1},\;-Ax^{k_j+1}+\gamma \nabla\phi(y^{k_j+1}) -\gamma \nabla\phi(y^{k_j})\right\rangle \geq 0.
		\end{aligned}\right.
	\end{equation}
	Consequently, taking limit as $j\to\infty$ over the subsequence ${k_j}$ in \eqref{subseq}, it follows from \eqref{eq14} that
	\begin{equation*}
		\left\{\begin{aligned}
			&f(x)-f(x^{\infty})+\left\langle x-x^\infty,\; A^\top y^\infty \right\rangle\geq 0, \quad \forall x\in\X,\\
			&g(y)-g(y^\infty)+\left\langle y-y^\infty,\; -Ax^\infty\right\rangle \geq 0,\quad \forall y\in\Y,
		\end{aligned}\right.
	\end{equation*}
	which, together with \eqref{KKT}, means that the limit point $(x^{\infty},y^{\infty})$ is a saddle point of $\L(x,y)$ in \eqref{Problem}. So $(x^{*}, y^{*})$ can be replaced by $(x^{\infty},y^{\infty})$ in the the sequence $\{\gamma\B_{\phi}(y^{*},y^{k})+ \mu\B_{\psi}(x^{*},x^{k})\}$. Thus $\{\gamma\B_{\phi}(y^{\infty},y^{k})+ \mu\B_{\psi}(x^{\infty},x^{k})\}$ is monotone decreasing and bounded, we obtain
	\begin{equation*}
		\lim_{k \rightarrow \infty} \left(\gamma\B_{\phi}(y^{\infty},y^{k})+ \mu\B_{\psi}(x^{\infty},x^{k})\right) =\lim_{j \rightarrow \infty} \left(\gamma\B_{\phi}(y^{\infty},y^{k_{j}})+ \mu\B_{\psi}(x^{\infty},x^{k_{j}})\right) = 0,
	\end{equation*}
	which means that $x^{k} \rightarrow x^{\infty}$ and $y^{k} \rightarrow y^{\infty}$ as $k \rightarrow \infty$. We complete the proof of this theorem.
\end{proof}

\subsection{Linear convergence}\label{sec_linear}
In this subsection, we turn our attention to establishing the linear convergence in the context of a generalized error bound conditions. First, we make the following assumption, which is an extended version used in the literature, e.g., \cite{JWCZ21,JZH23,WH20,YH16}.

\begin{assumption}\label{ass36}
	Assume that, for any $\omega>0$, there exists $\eta>0$ such that
	\begin{equation}\label{err}
		\dist_{\B}(\u,\U^\star)\leq \eta \dist(0,\E(\u,1)), \;\;\;\forall \|\u\|\leq\omega,\; \u\in \U,
	\end{equation}
	where $\E(\u,1)$ is given by \eqref{error25} with $t=1$. Let $\gamma >0$, $\mu >0$ and $\dist_{\B}(\u,\U^\star)$  be defined by
	\begin{equation}\label{defB}
		\dist_{\B}(\u,\U^\star) =\min \left\{\sqrt{\B(\u^\star,\u)}\;\big{|}\; \u^\star \in \U^\star \right\}
	\end{equation}
	with $\B(\u^\star,\u):=\gamma\B_{\phi}(y^\star,y)+\mu \B_{\psi}(x^\star,x)$. Moreover, we assume that the gradients of $\psi(\cdot)$ and $\phi(\cdot)$ are Lipschitz continuous with modulus $L_\psi$ and $L_\phi$, respectively.
\end{assumption}

Hereafter, we establish the global linear convergence of Algorithm \ref{alg1} under Assumption \ref{ass36} in the context of a generalized error bound condition defined by Bregman distance. We begin our analysis with the following lemma.

\begin{lemma}
	Let $\{(x^{k+1},\tilde y^{k+1}, y^{k+1})\}$ be a sequence generated by Algorithm \ref{alg1}. Then, we have
	\begin{align}\label{ineq335}
		\dist^2\left(0,\E(\u^{k+1},1)\right) &\leq 2\left(\|AA^\top\|+\gamma^2 L_{\phi}^2\right)\|\tilde{y}^{k+1}-y^{k+1}\|^2  \nn \\
		& \qquad +2\mu^2 L_\psi^2\|x^{k+1}-x^{k}\|^2 + 2\gamma^2 L_{\phi}^2 \left\|\tilde{y}^{k+1}-y^{k}\right\|^2.
	\end{align}
	where $L_{\phi}$ and $L_\psi$ are the Lipschitz continuity constants of $\nabla\phi(\cdot)$ and $\nabla\psi(\cdot)$, respectively.
\end{lemma}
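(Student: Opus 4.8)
The plan is to bound the two block components $\E_{\X}(\u^{k+1},1)$ and $\E_{\Y}(\u^{k+1},1)$ of the residual mapping separately and then add the squared norms. The key structural observation is that $\dist\big(0,\E(\u^{k+1},1)\big)$ is the infimum of $\|\E(\u^{k+1},1)\|$ over all admissible subgradient selections $\xi\in\partial f(x^{k+1})$ and $\zeta\in\partial g(y^{k+1})$, and that $\E_{\X}$ depends on the selection only through $\xi$ while $\E_{\Y}$ depends only through $\zeta$. Hence the infimum decouples across the two blocks, and it suffices to exhibit one convenient selection for each block and estimate the resulting norm. The convenient selections are exactly those read off from the first-order optimality conditions of the subproblems \eqref{SPIDA-b} and \eqref{SPIDA-c}.

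For the $x$-block I would start from the first-order optimality condition of \eqref{SPIDA-b}, which guarantees a subgradient $\xi^{k+1}\in\partial f(x^{k+1})$ with $-\big(\xi^{k+1}+A^\top\tilde y^{k+1}+\mu\nabla\psi(x^{k+1})-\mu\nabla\psi(x^{k})\big)$ lying in the normal cone $N_{\X}(x^{k+1})$; translating this inclusion into projection form yields the fixed-point identity $x^{k+1}=\Pi_\X\big[x^{k+1}-(\xi^{k+1}+A^\top\tilde y^{k+1}+\mu\nabla\psi(x^{k+1})-\mu\nabla\psi(x^{k}))\big]$. Subtracting this from $\E_{\X}(\u^{k+1},1)=x^{k+1}-\Pi_\X[x^{k+1}-(\xi^{k+1}+A^\top y^{k+1})]$ and invoking the nonexpansiveness of $\Pi_\X$, the $x^{k+1}$ and $\xi^{k+1}$ contributions cancel and the residual collapses to $A^\top(y^{k+1}-\tilde y^{k+1})-\mu(\nabla\psi(x^{k+1})-\nabla\psi(x^{k}))$. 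Applying $\|a+b\|^2\le 2\|a\|^2+2\|b\|^2$, the estimate $\|A^\top v\|^2\le\|AA^\top\|\,\|v\|^2$, and the $L_\psi$-Lipschitz continuity of $\nabla\psi$ then produces precisely the terms $2\|AA^\top\|\,\|\tilde y^{k+1}-y^{k+1}\|^2$ and $2\mu^2L_\psi^2\|x^{k+1}-x^{k}\|^2$.

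The $y$-block is handled in the same way, using the optimality condition of \eqref{SPIDA-c} to obtain $\zeta^{k+1}\in\partial g(y^{k+1})$ and the identity $y^{k+1}=\Pi_\Y\big[y^{k+1}-(\zeta^{k+1}-Ax^{k+1}+\gamma\nabla\phi(y^{k+1})-\gamma\nabla\phi(y^{k}))\big]$. Comparing with $\E_{\Y}(\u^{k+1},1)=y^{k+1}-\Pi_\Y[y^{k+1}-(\zeta^{k+1}-Ax^{k+1})]$ through nonexpansiveness of $\Pi_\Y$, the residual collapses to $\gamma(\nabla\phi(y^{k+1})-\nabla\phi(y^{k}))$, which the $L_\phi$-Lipschitz continuity of $\nabla\phi$ bounds by $\gamma L_\phi\|y^{k+1}-y^{k}\|$. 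The decisive bridging step here is to insert $\tilde y^{k+1}$ via the triangle inequality $\|y^{k+1}-y^{k}\|\le\|y^{k+1}-\tilde y^{k+1}\|+\|\tilde y^{k+1}-y^{k}\|$, and then apply $\|a+b\|^2\le 2\|a\|^2+2\|b\|^2$ to split it into the required $2\gamma^2L_\phi^2\|\tilde y^{k+1}-y^{k+1}\|^2$ and $2\gamma^2L_\phi^2\|\tilde y^{k+1}-y^{k}\|^2$ terms. Adding the $x$- and $y$-block bounds and merging the two $\|\tilde y^{k+1}-y^{k+1}\|^2$ contributions into the coefficient $2(\|AA^\top\|+\gamma^2L_\phi^2)$ yields the claimed inequality.

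The individual estimates (nonexpansiveness of the projection, Cauchy--Schwarz, Lipschitz continuity, and the $2\|a\|^2+2\|b\|^2$ splitting) are routine; the only genuinely delicate point is the bookkeeping in the first step. One must extract the correct subgradients from the subproblem optimality conditions, rewrite each condition as a projection fixed point whose perturbation differs from the residual mapping $\E(\u^{k+1},1)$ only in the coupling term and the Bregman-gradient increment, and verify that the $x^{k+1}$, $y^{k+1}$ and subgradient contributions cancel so that exactly $A^\top(y^{k+1}-\tilde y^{k+1})$, $\mu(\nabla\psi(x^{k+1})-\nabla\psi(x^{k}))$ and $\gamma(\nabla\phi(y^{k+1})-\nabla\phi(y^{k}))$ survive. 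It is the consistency of the operator in \eqref{error25} with the variational characterization \eqref{KKT} that makes these cancellations possible, and the appearance of $\tilde y^{k+1}$ in the $x$-subproblem (rather than $y^{k+1}$) that forces the mixed $\|\tilde y^{k+1}-y^{k+1}\|$ and $\|\tilde y^{k+1}-y^{k}\|$ terms on the right-hand side.
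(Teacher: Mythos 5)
Your proposal is correct and follows essentially the same route as the paper's proof: rewrite the optimality conditions of \eqref{SPIDA-b} and \eqref{SPIDA-c} as projection fixed points, use nonexpansiveness of $\Pi_\X$ and $\Pi_\Y$ to reduce each block residual to $A^\top(y^{k+1}-\tilde y^{k+1})-\mu(\nabla\psi(x^{k+1})-\nabla\psi(x^k))$ and $\gamma(\nabla\phi(y^{k+1})-\nabla\phi(y^k))$ respectively, and then apply the $\|a+b\|^2\le 2\|a\|^2+2\|b\|^2$ splitting together with the Lipschitz bounds. Your explicit remark that the infimum over subgradient selections decouples across the two blocks is a slightly more careful justification than the paper gives, but the substance of the argument is identical.
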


\begin{proof}
	Letting $\xi^{k+1}\in\partial f(x^{k+1})$, it then follows from the first-order optimality condition of $x$-subproblem \eqref{SPIDA-b} that
	\begin{equation}\label{eq:lem3.3a}
		x^{k+1}=\Pi_{\X} \left[x^{k+1}-\left(\xi^{k+1}-A^\t \tilde{y}^{k+1} +\mu\nabla \B_{\psi}(x^{k+1},x^{k})\right)\right],
	\end{equation}
	where $\nabla \B_{\psi}(x^{k+1},x^{k}) =  \langle \nabla \psi(x^{k+1}) - \nabla \psi(x^{k}),x^{k+1}-x^{k} \rangle $. By using \eqref{eq:lem3.3a} and the nonexpansiveness of projection operator $\Pi_{\X}(\cdot)$, we have
	\begin{align}\label{eq424}
		&\dist^2\left(0, \E_{\X}(\u^{k+1},1)\right) \nn \\
		& =  \dist^2\left(x^{k+1}, \Pi_{\X} \left[x^{k+1}-(\partial f(x^{k+1})+A^\t y^{k+1})\right]\right) \nn \\
		& \leq  \left\|\Pi_{\X} \left[x^{k+1}-\left(\xi^{k+1}+A^\t \tilde{y}^{k+1} +\mu\nabla\B_{\psi}(x^{k+1},x^{k})\right)\right]  -\Pi_{\X} \left[x^{k+1}-(\xi^{k+1}+A^\t y^{k+1})\right] \right\|^2 \nn \\
		&\leq  \left\|A^\t(y^{k+1}-\tilde{y}^{k+1})-\mu\nabla\B_{\psi}(x^{k+1},x^{k})\right\|^2 \nn \\
		&\leq  2\|AA^\t\|  \left\|\tilde{y}^{k+1}-y^{k+1}\right\|^2+2\mu^2 \left\|\nabla\B_{\psi}(x^{k+1},x^{k})\right\|^2 \nn \\
		&\leq 2\|AA^\t\|  \left\|\tilde{y}^{k+1}-y^{k+1}\right\|^2+2\mu^2 L_\psi^2\left\|x^{k+1}-x^{k}\right\|^2,
	\end{align}
	where the second inequality is derived by the fact that $ \|a+b\|^2\leq 2\|a\|^2+2\|b\|^2$ holds for all $ a, b\in \R^n$,
	and the last inequality follows from the Lipschitz continuity of $\nabla \psi(\cdot)$.
	
	Similarly, it follows from the first-order optimality condition of the $y$-subproblem \eqref{SPIDA-c} that
	$$ y^{k+1}= \Pi_{\Y}\left[y^{k+1}-\left(\zeta^{k+1}-Ax^{k+1}+\gamma\nabla\B_{\phi}(y^{k+1},y^{k})\right)\right].$$
	Then, we have
	\begin{align}\label{eq425}
		&\dist^2\left(0, \E_{\Y}(\u^{k+1},1)\right) \nn \\
		&= \dist^2\left( y^{k+1}, \Pi_{\Y}\left[y^{k+1}-\left(\partial g(y^{k+1})-Ax^{k+1}\right)\right] \right) \nn \\
		&\leq \left\|\Pi_{\Y} \left[ y^{k+1}-\left(\zeta^{k+1}-Ax^{k+1}+\gamma \nabla\B_{\phi}(y^{k+1},y^{k}) \right) \right]  -\Pi_{\Y} \left[ y^{k+1}-(\zeta^{k+1}-Ax^{k+1})\right] \right\|^2 \nn \\
		&\leq \gamma^2 \left\|\nabla\B_{\phi}(y^{k+1},y^{k})\right\|^2 \nn \\
		& \leq  \gamma^2 L_{\phi}^2 \left\|y^{k+1}-y^{k}\right\|^2 \nn \\
		&    \leq  2\gamma^2 L_{\phi}^2 \left\|y^{k+1}-\tilde{y}^{k+1}\right\|^2  + 2\gamma^2 L_{\phi}^2 \left\|\tilde{y}^{k+1}-y^{k}\right\|^2.
	\end{align}
	Consequently, combining \eqref{eq424} and \eqref{eq425} leads to
	\begin{align*}
		&\dist^2\left(0,\E(\u^{k+1},1)\right)\nn  \\
		&=\dist^2\left(0, \E_{\X}(\u^{k+1},1)\right)+ \dist^2(0, \E_{\Y}\left(\u^{k+1},1)\right) \nn \\
		&\leq   2 \left(\|AA^\t\|+\gamma^2 L_{\phi}^2\right)\|\tilde{y}^{k+1}-y^{k+1}\|^2+2\mu^2 L_\psi^2\|x^{k+1}-x^{k}\|^2    + 2\gamma^2 L_{\phi}^2 \left\|\tilde{y}^{k+1}-y^{k}\right\|^2.
	\end{align*}
	Hence, the assertion of this lemma is obtained.
\end{proof}

Now, we establish the linear convergence rate of Algorithm \ref{alg1} by the following theorem.
\begin{theorem}
	Let $\{(x^{k+1},\tilde y^{k+1},{y}^{k+1})\}$ be the sequence generated by Algorithm \ref{alg1}. Suppose that Assumptions \ref{assum1} and \ref{ass36} hold. Then, we have
	\begin{equation}
		(1+\vartheta)\dist_{\B}^2(\u^{k+1},\U^\star)\le \dist_{\B}^2(\u^k,\U^\star), \notag
	\end{equation}
	where $\vartheta$ is a positive constant given by
	$$ \vartheta=\min \left\{\;\frac{\varrho}{4\gamma\eta^2  L_{\phi}^2},\; \frac{\gamma\alpha \varrho-1 }{4\alpha \eta^2 (\|AA^\top\|+ \gamma^2L_{\phi}^2)}, \;   \frac{  \kappa    }{4\mu \eta^2 L_\psi^2} \; \right\}.$$
\end{theorem}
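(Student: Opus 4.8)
The plan is to convert the Fejér-type inequality \eqref{fejer331} into a genuine contraction by absorbing its left-hand side into the distance $\dist_{\B}$. Write
$$\Theta_k := \gamma\B_{\phi}(\tilde y^{k+1},y^{k}) + \left(\gamma-\tfrac{1}{\alpha\varrho}\right)\B_{\phi}(y^{k+1},\tilde y^{k+1}) + \mu\B_{\Psi}(x^{k+1},x^{k})$$
for the nonnegative quantity on the left of \eqref{fejer331}. I would establish the two bounds
$$\Theta_k \le \dist_{\B}^2(\u^{k},\U^\star) - \dist_{\B}^2(\u^{k+1},\U^\star) \qquad\text{and}\qquad \Theta_k \ge \vartheta\,\dist_{\B}^2(\u^{k+1},\U^\star);$$
adding them yields $(1+\vartheta)\dist_{\B}^2(\u^{k+1},\U^\star)\le\dist_{\B}^2(\u^{k},\U^\star)$ immediately.

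For the first bound, I would use that \eqref{fejer331} holds for \emph{every} saddle point $\u^\star=(x^\star,y^\star)\in\U^\star$, since it was obtained from \eqref{eq13} together with the saddle-point inequality \eqref{saddleineq}. Choosing $\u^\star$ to attain the minimum in \eqref{defB} at index $k$, so that $\B(\u^\star,\u^{k})=\dist_{\B}^2(\u^{k},\U^\star)$, the right-hand side of \eqref{fejer331} is exactly $\B(\u^\star,\u^{k})-\B(\u^\star,\u^{k+1})$. Since $\u^\star\in\U^\star$, the definition of $\dist_{\B}$ gives $\B(\u^\star,\u^{k+1})\ge\dist_{\B}^2(\u^{k+1},\U^\star)$, and the first bound follows.

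For the second bound — the crux — I would start from the error bound \eqref{err}. The iterates are bounded (global convergence theorem), so $\|\u^{k+1}\|\le\omega$ for a fixed $\omega$, whence $\dist_{\B}^2(\u^{k+1},\U^\star)\le\eta^2\dist^2(0,\E(\u^{k+1},1))$. Bounding the right side by Lemma \eqref{ineq335} and then converting each squared norm into a Bregman term via strong convexity — Lemma \ref{lem:Bregman}(i) gives $\|\tilde y^{k+1}-y^{k}\|^2\le\frac{2}{\varrho}\B_{\phi}(\tilde y^{k+1},y^{k})$ and $\|\tilde y^{k+1}-y^{k+1}\|^2\le\frac{2}{\varrho}\B_{\phi}(y^{k+1},\tilde y^{k+1})$, while the $\varsigma$-strong convexity of $\Psi$ gives $\|x^{k+1}-x^{k}\|^2\le\frac{2}{\varsigma}\B_{\Psi}(x^{k+1},x^{k})$ — turns the estimate into a weighted sum of the three Bregman terms appearing in $\Theta_k$. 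The three ratios defining $\vartheta$ are precisely these weights inverted, so choosing $\vartheta$ as their minimum makes each term of $\dist_{\B}^2(\u^{k+1},\U^\star)$ at most $\vartheta^{-1}$ times the matching term of $\Theta_k$; summing the three term-wise bounds gives $\dist_{\B}^2(\u^{k+1},\U^\star)\le\vartheta^{-1}\Theta_k$, which is the second bound.

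The hard part is not any single inequality but the bookkeeping that makes the three ratios coincide with the stated $\vartheta$, together with guaranteeing $\vartheta>0$. Positivity requires $\gamma\alpha\varrho>1$ (so that the coefficient $\gamma-\frac{1}{\alpha\varrho}$ of $\Theta_k$ and the second ratio are positive) and $\varsigma>0$; the latter holds whenever the bound $L\le\frac{\varrho\mu}{\alpha\|A^\top A\|}$ of Assumption \ref{assum1} is strict and $\psi$ is strongly convex, since then $\B_{\Psi}=\B_{\psi}-\frac{\alpha\|A^\top A\|}{\varrho\mu}\B_{\phi}\ge\bigl(1-\frac{\alpha\|A^\top A\|L}{\varrho\mu}\bigr)\B_{\psi}$ inherits strong convexity from $\psi$ through \eqref{Lphi-varphi}. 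Tracking the factor of two from $\|a+b\|^2\le 2\|a\|^2+2\|b\|^2$ in \eqref{ineq335} and the factors $2/\varrho$ and $2/\varsigma$ from strong convexity is exactly what produces the denominators $4\gamma\eta^2 L_\phi^2$, $4\alpha\eta^2(\|AA^\top\|+\gamma^2 L_\phi^2)$, and $4\mu\eta^2 L_\psi^2$, and I expect verifying this exact match to be the principal obstacle.
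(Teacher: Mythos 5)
Your proposal is correct and follows essentially the same route as the paper: the paper also combines the Fej\'er inequality \eqref{fejer331} with the error bound \eqref{err} and the bound \eqref{ineq335}, converting between squared norms and Bregman terms via strong convexity so that the three coefficients match the three ratios defining $\vartheta$; your two-bound decomposition of $\Theta_k$ is just an algebraic reorganization of the paper's step $(1+\vartheta)\dist_{\B}^2=\dist_{\B}^2+\vartheta\dist_{\B}^2$. Your reading of $\varsigma$ as a strong-convexity modulus effective for $\B_{\Psi}$ (justified through \eqref{Lphi-varphi} when the inequality in Assumption \ref{assum1} is strict) is in fact slightly more careful than the paper, which invokes the $\varsigma$-strong convexity of $\psi$ but applies it to the term $\mu\B_{\Psi}(x^{k+1},x^{k})$.
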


\begin{proof} It first follows from \eqref{fejer331} that
	\begin{align}\label{ieq339}
		\gamma\B_{\phi}(y^\star,y^{k+1})+\mu \B_{\psi}(x^\star,x^{k+1})
		& \le \gamma \B_{\phi}(y^\star,y^{k})+\mu \B_{\psi}(x^\star,x^{k}) - \gamma\B_{\phi}(\tilde y^{k+1},y^{k})\nn \\
		& \quad  - \left(\gamma-\frac{1}{\alpha \varrho}\right)\B_{\phi}(y^{k+1},\tilde y^{k+1}) - \widehat{\beta} \|  x^{k+1} - x^{k} \|^{2} .
	\end{align}
	In accordance with Assumption \ref{ass36}, i.e., inequality \eqref{err}, it follows from \eqref{ineq335} that
	\begin{align}
		&\dist_{\B}^2\left(\u^{k+1},\U^\star\right) \nn \\
		& \leq  \eta^2   \dist^2\left(0,\E(\u^{k+1},1)\right) \nn \\
		& \leq \eta^2 \left[ 2\left(\|AA^\t\|+\gamma^2 L_{\phi}^2\right) \left\|\tilde{y}^{k+1}-y^{k+1}\right\|^2+2\mu^2 L_\psi^2 \left\|x^{k+1}-x^{k}\right\|^2   + 2\gamma^2 L_{\phi}^2 \left\|\tilde{y}^{k+1}-y^{k}\right\|^2 \right]. 
	\end{align}
	Consequently, by the definition of $\dist_{\B}(\u,\U^\star)$ given by \eqref{defB}, we have
	\begin{align}\label{lc-ineq}
		& (1+\vartheta)\dist_{\B}^2(\u^{k+1},\U^\star)  \nn\\
		& = \dist_{\B}^2(\u^{k+1},\U^\star) + \vartheta \dist_{\B}^2(\u^{k+1},\U^\star) \nn \\
		&\leq \left[\gamma\B_{\phi}(y^\star,y^{k+1})+\mu \B_{\psi}(x^\star,x^{k+1})\right]+\vartheta\dist_{\B}^2(\u^{k+1},\U^\star) \nn  \\
		& \leq \gamma\B_{\phi}(y^\star,y^{k})+\mu \B_{\psi}(x^\star,x^{k})   - \gamma\B_{\phi}(\tilde y^{k+1},y^{k}) - \left(\gamma-\frac{1}{\alpha \varrho}\right)\B_{\phi}(y^{k+1},\tilde y^{k+1}) - \widehat{\beta} \|  x^{k+1} - x^{k} \|^{2} \nn\\
		&\quad + \vartheta \eta^2  \left[ 2\left(\|AA^\t\|+\gamma^2 L_{\phi}^2\right) \left\|\tilde{y}^{k+1}-y^{k+1}\right\|^2+2\mu^2 L_\psi^2\left\|x^{k+1}-x^{k}\right\|^2 + 2\gamma^2 L_{\phi}^2 \left\|\tilde{y}^{k+1}-y^{k}\right\|^2 \right] \nn \\
		& \leq \gamma\B_{\phi}(y^\star,y^{k})+\mu \B_{\psi}(x^\star,x^{k})-\left[\frac{\gamma \varrho}{2}-2 \vartheta \eta^2 \gamma^2 L_{\phi}^2 \right] \left\|\tilde{y}^{k+1}-y^{k}\right\|^2  \nn\\
		& \quad-\left[\frac{\gamma\alpha\varrho-1}{2\alpha}  -2\vartheta \eta^2   \left(\|AA^\t\|+\gamma^2 L_{\phi}^2\right)\right] \left\|\tilde{y}^{k+1}-y^{k+1}\right\|^2 \nn \\
		& \quad - \left[ \frac{  \widehat{\beta} \kappa }{2}-2\vartheta \eta^2\mu^2 L_\psi^2 \right] \left\|x^{k+1}-x^{k}\right\|^2, 
	\end{align}
	where the second inequality is derived by \eqref{ieq339}, and the third inequality follows from the strong convexity of $\B_{\phi}$
	and $\B_{\psi}$. As a consequence, by using the definition of $\vartheta$, inequality \eqref{lc-ineq} implies that
	\begin{equation}\label{finineq}
		(1+\vartheta)\dist_{\B}^2(\u^{k+1},\U^\star) \leq \gamma\B_{\phi}(y^\star,y^{k})+\mu \B_{\psi}(x^\star,x^{k}) - \Gamma(x^{k+1},\tilde{y}^{k+1},y^{k+1}),
	\end{equation}
	where $\Gamma(x^{k+1},\tilde{y}^{k+1},y^{k+1})$ is a positive term composed by the last three terms of \eqref{lc-ineq}.
	Then, setting $ \dist_{\B}^2(\u^{k},\U^\star)=\gamma\B_{\phi}(y^\star,y^{k})+\mu \B_{\psi}(x^\star,x^{k})$ in \eqref{finineq}
	immediately arrives at
	\begin{equation*}
		(1+\vartheta)\dist_{\B}^2(\u^{k+1},\U^\star) \leq \gamma\B_{\phi}(y^\star,y^{k})+\mu \B_{\psi}(x^\star,x^{k}) = \dist_{\B}^2(\u^{k},\U^\star).
	\end{equation*}
	The assertion of this theorem is obtained.
\end{proof}

\section{Applications to Linearly Constrained Convex Minimization}\label{Sec-LCM}
In this section, we shall show that our Algorithm \ref{alg1} will recover the iterative schemes of some classical first-order algorithms, when applying it to linearly constrained convex optimization problems.

\subsection{One-block case: Augmented Lagrangian method}\label{sec-one-block}
In this subsection, we consider the following one-block convex minimization problem with linear constraints:
\begin{equation}\label{one-block-min}
	\min_{x} \left\{\; f(x) \;|\; Ax=b,\;\;x\in\X\subseteq \R^n\right\},
\end{equation}
where $f(\cdot):\X\to (-\infty,+\infty]$ is a proper closed convex function and $A\in\R^{m\times n}$ is a given matrix, and $b\in\R^m$ is a given vector.
Accordingly,  its augmented Lagrangian function reads as
\begin{equation*}\label{ALF}
	\L_{\gamma}(x,y)=f(x) + \langle Ax -b, y\rangle +\frac{1}{2\gamma}\|Ax-b\|^2, \quad \gamma >0,
\end{equation*}
and the augmented Lagrangian method for \eqref{one-block-min} is
\begin{equation*}
	\left\{ \begin{aligned}
		x^{k+1}&=\arg\min_{x\in\X}\left\{ f(x) + \langle Ax-b,y^k\rangle +\frac{1}{2\gamma}\|Ax-b\|^2 \right\}, \\
		y^{k+1} &= y^k +\frac{1}{\gamma}(Ax^{k+1}-b).
	\end{aligned}\right.
\end{equation*}
or equivalently,
\begin{equation}\label{ALM}
	\left\{ \begin{aligned}
		x^{k+1}&=\arg\min_{x\in\X}\left\{ f(x) + \frac{1}{2\gamma}\|Ax-b+\gamma y^k\|^2 \right\}, \\
		y^{k+1} &= y^k +\frac{1}{\gamma}(Ax^{k+1}-b).
	\end{aligned}\right.
\end{equation}
It is not difficult to observe that directly solving the $x$-subproblem in \eqref{ALM} is not an easy task (at least is not easily implementable), when $f(x)$ is a nonsmooth function (e.g., $\|x\|_1$ or nuclear norm for matrices) and $A$ is a general matrix, even for the case $\X=\R^n$ (e.g., see \cite{YY13a}). Therefore, a natural way to make \eqref{ALM} implementable is linearizing the quadratic penalty term at $x^k$ as follows:
\begin{equation}\label{linearize}
	\frac{1}{2\gamma}\|Ax-b\|^2 \approx \frac{1}{2\gamma}\|Ax^k-b\|^2 + \frac{1}{\gamma} \langle A^\top (Ax^k-b), x-x^k\rangle +\frac{\mu}{2}\|x-x^k\|^2,
\end{equation}
where $\mu\gamma  \geq\|A^\top A\|$.
Consequently, using the above approximation \eqref{linearize} instead of quadratic penalty term in \eqref{ALM}, we immediately obtain the so-called Linearized Augmented Lagrangian Method (LALM) for \eqref{one-block-min}, i.e.,
\begin{equation}\label{LALM}
	\left\{ \begin{aligned}
		x^{k+1}&=\arg\min_{x\in\X}\left\{ f(x) +\frac{\mu}{2}\left\|x -\left( x^k + \frac{1}{\mu}A^\top \left(y^k + \frac{1}{\gamma}(Ax^k-b)\right)\right)\right\|^2 \right\}, \\
		y^{k+1} &= y^k +\frac{1}{\gamma}(Ax^{k+1}-b).
	\end{aligned}\right.
\end{equation}

When applying our Algorithm \ref{alg1} to \eqref{one-block-min}, we first reformulate \eqref{one-block-min} as the following saddle point problem:
\begin{equation}\label{one-saddle}
	\min_{x\in\X} \max_{y\in \R^m} \left\{\L(x,y):=f(x)+\langle Ax, y\rangle - \langle b, y\rangle \right\}.
\end{equation}
Then, the specific iterative scheme of Algorithm \ref{alg1} for \eqref{one-saddle} reads as
\begin{equation}\label{PD-ALM}
	\left\{ \begin{aligned}
		\tilde{y}^{k+1} &= \arg\max_{y\in \R^m}\left\{ -\langle b,y\rangle + \langle Ax^k, y\rangle - \gamma \B_\phi(y,y^k) \right\},  \\
		x^{k+1} &= \arg\min_{x\in \mathcal{X}}\left\{ f(x) + \langle Ax, \tilde{y}^{k+1} \rangle +\mu\B_\psi(x,x^k) \right\},   \\
		y^{k+1} &= \arg\max_{y\in \R^m}\left\{ -\langle b,y\rangle + \langle Ax^{k+1}, y\rangle - \gamma \B_\phi(y,y^k) \right\}.
	\end{aligned}\right.
\end{equation}
Clearly, by setting the Bregman kernel functions as $\phi(y)=\frac{1}{2}\|y\|^2$ and $\psi(x)=\frac{1}{2}\|x\|^2_{A^\top A}$, the iterative scheme \eqref{PD-ALM} reads as
\begin{equation*}
	\left\{ \begin{aligned}
		\tilde{y}^{k+1} &= \arg\max_{y\in \R^m}\left\{ -\langle b,y\rangle + \langle Ax^k, y\rangle - \frac{\gamma}{2}\|y-y^k\|^2 \right\},  \\
		x^{k+1} &= \arg\min_{x\in \mathcal{X}}\left\{ f(x) + \langle Ax, \tilde{y}^{k+1} \rangle +\frac{\mu}{2}\|Ax-Ax^k\|^2 \right\},   \\
		y^{k+1} &= \arg\max_{y\in \R^m}\left\{ -\langle b,y\rangle + \langle Ax^{k+1}, y\rangle -  \frac{\gamma}{2}\|y-y^k\|^2 \right\},
	\end{aligned}\right.
\end{equation*}
which, by using the first-order optimality conditions of both $y$-subproblems,  can be immediately simplified as
\begin{equation*}\label{SPD-ALM}
	\left\{ \begin{aligned}
		\tilde{y}^{k+1} &= y^k+ \frac{1}{\gamma}(Ax^k-b),  \\
		x^{k+1} &= \arg\min_{x\in \mathcal{X}}\left\{ f(x) + \frac{\mu}{2}\left\|Ax-\left( Ax^k -\frac{1}{\mu} \tilde{y}^{k+1}\right)\right\|^2 \right\},   \\
		y^{k+1} &= y^k+ \frac{1}{\gamma}(Ax^{k+1}-b).
	\end{aligned}\right.
\end{equation*}
It is trivial that substituting $\tilde{y}^{k+1}$ into the update of $x^{k+1}$ and setting $\mu=1/\gamma$ immediately yields the ALM \eqref{ALM}.

On the other hand, by setting the Bregman kernel functions as $\phi(y)=\frac{1}{2}\|y\|^2$ and $\psi(x)=\frac{1}{2}\|x\|^2$, the iterative scheme \eqref{PD-ALM} reads as
\begin{equation*}
	\left\{ \begin{aligned}
		\tilde{y}^{k+1} &= \arg\max_{y\in \R^m}\left\{ -\langle b,y\rangle + \langle Ax^k, y\rangle - \frac{\gamma}{2}\|y-y^k\|^2 \right\},  \\
		x^{k+1} &= \arg\min_{x\in \mathcal{X}}\left\{ f(x) + \langle Ax, \tilde{y}^{k+1} \rangle +\frac{\mu}{2}\|x-x^k\|^2 \right\},   \\
		y^{k+1} &= \arg\max_{y\in \R^m}\left\{ -\langle b,y\rangle + \langle Ax^{k+1}, y\rangle -  \frac{\gamma}{2}\|y-y^k\|^2 \right\},
	\end{aligned}\right.
\end{equation*}
which, by using the first-order optimality conditions of both $y$-subproblems,  can be immediately simplified as
\begin{equation*}\label{PD-LALM}
	\left\{ \begin{aligned}
		\tilde{y}^{k+1} &= y^k+ \frac{1}{\gamma}(Ax^k-b),  \\
		x^{k+1} &= \arg\min_{x\in \mathcal{X}}\left\{ f(x) + \frac{\mu}{2}\left\|x-\left( x^k -\frac{1}{\mu}A^\top \tilde{y}^{k+1}\right)\right\|^2 \right\},   \\
		y^{k+1} &= y^k+ \frac{1}{\gamma}(Ax^{k+1}-b).
	\end{aligned}\right.
\end{equation*}
Clearly, plugging the formula of $\tilde{y}^{k+1}$ into the update scheme of $x^{k+1}$ immediately yields the LALM \eqref{LALM}.

It is interesting to notice that our algorithmic framework allows us to take different Bregman kernel functions. Therefore, we here follow the novel idea of the newly introduced balanced augmented Lagrangian method \cite{HY21} to further consider taking $\phi(y)=\frac{1}{2}\|y\|^2_{(AA^\top  +\epsilon I)}$ and $\psi(x)=\frac{1}{2}\|x\|^2$ for model \eqref{one-block-min}, where $I$ stands for an identity matrix which always automatically matches the size for operations, and $I_n$ specially denotes the $n$-dimensional identity matrix. Specifically, the concrete iterative scheme of Algorithm \ref{alg1} reads as
\begin{equation}\label{DB-LALM}
	\left\{ \begin{aligned}
		\tilde{y}^{k+1} &= y^k+ \frac{1}{\gamma}(A A^\top + \epsilon I)^{-1}(Ax^k-b),  \\
		x^{k+1} &= \arg\min_{x\in \mathcal{X}}\left\{ f(x) + \frac{\mu}{2}\left\|x-\left( x^k -\frac{1}{\mu}A^\top \tilde{y}^{k+1}\right)\right\|^2 \right\},   \\
		y^{k+1} &= y^k+ \frac{1}{\gamma}(AA^\top  + \epsilon I)^{-1}(Ax^{k+1}-b).
	\end{aligned}\right.
\end{equation}
It should be noted that, for the case where $n \ll m$, we can employ the well-known Sherman-Morrison-Woodbury theorem to reduce the computational cost of computing $(AA^\top  + \epsilon I)^{-1}$, i.e., $(AA^\top  + \epsilon I)^{-1}=\epsilon^{-1}\left(I_m - A(\epsilon I_ n + A^\top A )^{-1}A^\top \right)$. In what follows, we call the scheme \eqref{DB-LALM} Doubly Balanced Augmented Lagrangian Method (DBALM).  Moreover, when $f(x)$ is a quadratic function, i.e., $f(x)=\frac{1}{2} \|Bx-q\|^2$, we can also take $\psi(x)=\frac{1}{2}\|x\|^2_{(\tau I - B^\top B)}$ to derive a linearized version of DBALM to deal with the case  where $\X$ is a simple convex set whose projection is easily calculated.

\subsection{Multi-block case: Jacobian splitting method}\label{sec-multi-block}
In this part, we are concerned with the multi-block linearly constrained convex minimization, which takes the form
\begin{equation}\label{multi-block-min}
	\min_{x_1,x_2,\ldots,x_p} \left\{ \sum_{i=1}^{p}f_i(x_i) \;\Big{|}\; \sum_{i=1}^{p}A_ix_i=b,\;\;x_i\in\X_i\subseteq \R^{n_i},\;i=1,2,\ldots,p\right\},
\end{equation}
where  $f_{i}(\cdot) : \mathcal{X}_{i} \rightarrow (-\infty,+\infty]$ for $i = 1, 2,\cdots , p$ are proper closed convex functions, $A_i\in \R^{m\times n_i}$ are given matrices, and $b\in\R^m$ is a given vector.
In the past decades, the multi-block model \eqref{multi-block-min} has received much considerable attention due to its widespread applications in computer sciences and automatic control, e.g., see \cite{HYZ14,TY11}. Although such a model is also a linearly constrained optimization problem, it cannot be easily solved via the aforementioned ALM \eqref{ALM} since the linear constraints make the ALM suffer from coupled subproblems so that the separability of the objective function cannot be fully exploited in algorithmic implementation. Accordingly, a series of augmented Lagrangian-based splitting methods were developed in the optimization literature, e.g., see \cite{HYZ14,He09,HHY15,HXY16,HHY16,WHML15} and references therein.

Below, we first show that our Algorithm \ref{alg1} is applicable to solving \eqref{multi-block-min}. In particular, we can easily derive that our Algorithm \ref{alg1} is indeed the fully Jacobian splitting algorithm \cite{HXY16,WDH15} by choosing appropriate Bregman kernel functions. Moreover, we can obtain some new Jacobian splitting methods for \eqref{multi-block-min}.

First, it is clear that \eqref{multi-block-min} can be rewritten into a compact form as follows:
\begin{equation}\label{multi-compact}
	\min_{\bx} \left\{ \bbf(\bx)\;|\;\bA \bx=b,\;\;\bx \in\X\right\},
\end{equation}
where $\bx:=(x_1^\top,x_2^\top,\ldots,x_p^\top)^\top$, $\bbf(\bx):=\sum_{i=1}^{p}f_{i}(x_{i}) $, $\bA:=[A_1,A_2,\ldots,A_p]$, and $\X:=\X_1\times \X_2\times\ldots\times \X_p$. Therefore, we can reformulate \eqref{multi-compact} as the form of \eqref{one-saddle}, i.e.,
\begin{equation}\label{multi-sdp}
	\min_{\bx\in\X}\max_{y \in \R^m} \left\{\L(\bx,y) := \bbf(\bx) + \left\langle  \bA\bx,y \right\rangle -\langle b,y\rangle \right\}.
\end{equation}
Then, the specific iterative scheme of Algorithm \ref{alg1} for \eqref{one-saddle} reads as
\begin{equation}\label{PD-Mult-ALM}
	\left\{ \begin{aligned}
		\tilde{y}^{k+1} &= \arg\max_{y\in \R^m}\left\{ -\langle b,y\rangle + \langle \bA\bx^k, y\rangle - \gamma \B_\phi(y,y^k) \right\},  \\
		\bx^{k+1} &= \arg\min_{\bx\in \mathcal{X}}\left\{ \bbf(\bx) + \langle \bA\bx, \tilde{y}^{k+1} \rangle +\mu\B_\psi(\bx,\bx^k) \right\},   \\
		y^{k+1} &= \arg\max_{y\in \R^m}\left\{ -\langle b,y\rangle + \langle \bA\bx^{k+1}, y\rangle - \gamma \B_\phi(y,y^k) \right\}.
	\end{aligned}\right.
\end{equation}
Now, we denote
$$M:=\left(\begin{array}{cccc}
	\beta_1A_1^\top A_1 & 0 & \cdots & 0 \\
	0 & \beta_2A_2^\top A_2 & \cdots & 0 \\
	\vdots & \vdots &  \ddots & \vdots \\
	0 & 0 & \cdots & \beta_pA_p^\top A_p
\end{array}\right).$$
Then, by setting the Bregman kernel functions as $\phi(y)=\frac{1}{2}\|y\|^2$ and $\psi(\bx)=\frac{1}{2}\|\bx\|^2_M$ and using the separability of the objective function, for $\mu=1$, the iterative scheme \eqref{PD-Mult-ALM} reads as
\begin{equation}\label{PD-Full-Jacobian}
	\left\{ \begin{aligned}
		\tilde{y}^{k+1} &= y^k+ \frac{1}{\gamma}\left(\sum_{i=1}^{p}A_ix_i^k-b\right),  \\
		x_i^{k+1} &= \arg\min_{x_i\in \X_i}\left\{ f_i(x_i) + \frac{ \beta_i }{2}\left\|Ax_i-\left( Ax_i^k -\frac{1}{\beta_i} \tilde{y}^{k+1}\right)\right\|^2 \right\}, \; i=1,\ldots,p,  \\
		y^{k+1} &= y^k+ \frac{1}{\gamma}\left(\sum_{i=1}^{p}A_ix_i^{k+1}-b\right),
	\end{aligned}\right.
\end{equation}
which actually corresponds to the proximal Jacobian splitting method studied in \cite{HXY16}, especially precisely coincides with the augmented Lagrangian-based parallel splitting method \cite{WDH15} by setting $\beta_i=1/\gamma$ for $i=1,2\cdots,p$.

More interestingly, by setting the Bregman kernel functions as $\phi(y)=\frac{1}{2}\|y\|^2$ and $\psi(\bx)=\frac{1}{2}\|\bx\|^2$, the specific iterative scheme of Algorithm \ref{alg1} for \eqref{multi-sdp} reads as
\begin{equation}\label{PD-LFJ}
	\left\{ \begin{aligned}
		\tilde{y}^{k+1} &= y^k+ \frac{1}{\gamma}\left(\sum_{i=1}^{p}A_ix_i^k-b\right),  \\
		x_i^{k+1} &= \arg\min_{x_i\in \X_i}\left\{ f_i(x_i) + \frac{\mu}{2}\left\|x_i-\left( x_i^k -\frac{1}{\mu}A_i^\top \tilde{y}^{k+1}\right)\right\|^2 \right\}, \; i=1,\ldots,p,  \\
		y^{k+1} &= y^k+ \frac{1}{\gamma}\left(\sum_{i=1}^{p}A_ix_i^{k+1}-b\right),
	\end{aligned}\right.
\end{equation}
which is a linearized parallel splitting method for \eqref{multi-block-min}. Comparing with the methods discussed in \cite{HHX14,HYZ14,HHY15}, the variant \eqref{PD-LFJ} enjoys relatively simpler iterative scheme without correction steps. Combining the ideas of \eqref{PD-Full-Jacobian} and \eqref{PD-LFJ}, we can take
$$\widehat{M}:=\left(\begin{array}{cccccc}
	\beta_1A_1^\top A_1& \cdots & 0 & 0 & \cdots & 0 \\
	\vdots & \ddots & \vdots & \vdots & \ddots & \vdots \\
	0 & \cdots & \beta_l A_l^\top A_l & 0 & \cdots & 0\\
	0 & \cdots &0 & \beta_{l+1}I & \cdots & 0 \\
	\vdots & \ddots &  \vdots & \vdots & \ddots & \vdots \\
	0 & \cdots & 0 & 0 & \cdots & \beta_p I
\end{array}\right)$$
so that $\phi(y)=\frac{1}{2}\|y\|^2$ and $\psi(\bx)=\frac{1}{2}\|\bx\|^2_{\widehat{M}}$, thereby producing a Partially Linearized Jacobian Splitting Method (PLJSM) for \eqref{multi-block-min} with setting $\mu=1$, i.e.,
\begin{equation*}
	\left\{ \begin{aligned}
		\tilde{y}^{k+1} &= y^k+ \frac{1}{\gamma}\left(\sum_{i=1}^{p}A_ix_i^k-b\right),  \\
		x_i^{k+1} &= \arg\min_{x_i\in \X_i}\left\{ f_i(x_i) + \frac{\beta_i}{2}\left\|Ax_i-\left( Ax_i^k -\frac{1}{\beta_i} \tilde{y}^{k+1}\right)\right\|^2 \right\}, \; i=1,\ldots,l,  \\
		x_j^{k+1} &= \arg\min_{x_j\in \X_j}\left\{ f_j(x_j) + \frac{\beta_j}{2}\left\|x_j-\left( x_j^k -\frac{1}{\beta_j}A_j^\top \tilde{y}^{k+1}\right)\right\|^2 \right\}, \; j=l+1,\ldots,p,  \\
		y^{k+1} &= y^k+ \frac{1}{\gamma}\left(\sum_{i=1}^{p}A_ix_i^{k+1}-b\right),
	\end{aligned}\right.
\end{equation*}
which, to our best knowledge, is not discussed in the literature. Of course, we can also follow the spirit of the DBALM \eqref{DB-LALM} to specify $\phi(y)=\frac{1}{2}\|y\|^2_{(\sum_{i=1}^{p}A_iA_i^\top + \epsilon I)}$ and $\psi(\bx)=\frac{1}{2}\|\bx\|^2_{\widehat{M}}$ to develop a doubly balanced PLJSM for \eqref{multi-block-min}.

\section{Numerical Experiments}\label{Sec5}
In this section, we conduct the numerical performance of Algorithm \ref{alg1} (denoted by SPIDA) on some well tested problems, including the basis pursuit, RPCA, and image restoration with synthetic and real-world datasets. We also compare our Algorithm \ref{alg1} with some existing state-of-the-art primal-dual-type algorithms for the purpose of showing the numerical improvement of our Algorithm \ref{alg1}. All algorithms are implemented in {\sc Matlab} 2021a and all experiments are conducted on a 64-bit Windows personal computer with Intel(R) Core(TM) i5-12500h CPU@2.50GHz and 8GB of RAM.

\subsection{Basis pursuit}\label{sec-bp}
As discussed in Section \ref{Sec-LCM}, our Algorithm \ref{alg1} (SPIDA) is applicable to dealing with linearly constrained optimization problem \eqref{one-block-min}. In this part, we are interested in the basis pursuit problem, which can be expressed mathematically as an $\ell_1$-norm minimization problem with linear constraints, i.e.,
\begin{equation}\label{bp}
	\min_{x \in \R^n}\;\left\{\; \| x \|_{1} \;|\; Ax=b\;\right\},
\end{equation}
where $A \in \mathbb{R}^{m \times n}$ is a sample matrix and $b \in \mathbb{R}^{m}$ is a measurement vector. Such a model \eqref{bp} is a fundamental problem in compressed sensing \cite{Don06} and can be efficiently solved via a large number of optimization solvers. Here, we just employ this example to investigate the ability of our SPIDA on solving linearly constrained optimization problems, in addition to showing the superiority of SPIDA over some popular first-order optimization methods.

First, by the Lagrangian function, we reformulate \eqref{bp} as the following min-max saddle point problem:
\begin{equation*}
	\min_{x \in \R^n}\max_{y \in \R^m} \left\{\; \L(x,y)=\| x \|_{1} + \langle Ax,y \rangle -\langle b,y \rangle\; \right\}.
\end{equation*}
We compare our SPIDA with PDHG (setting $\tau=1$ in \eqref{fopda}) and GRPDA \cite{CY21}. Most recently, He and Yuan \cite{HY21} introduced a novel Balanced Augmented Lagrangian Method (BALM) for linearly convex programming, which is a great improvement of the classical ALM \eqref{ALM}. The iterative scheme of BALM for \eqref{bp} reads as
\begin{equation}\label{BALM}
	\left\{ \begin{aligned}
		x^{k+1} &= \arg\min_{x\in \R^n}\left\{ f(x) + \frac{\gamma}{2}\left\| x - \left( x^k - \frac{1}{\gamma}A^\top y^k \right)\right\|^2 \right\},   \\
		y^{k+1} &= y^k + \left(\frac{1}{\gamma}AA^\top  + \epsilon I\right)^{-1}\left(A(2x^{k+1}-x^k)-b\right).
	\end{aligned}\right.
\end{equation}
In this part, we also compare our SPIDA with the BALM \eqref{BALM}. Moreover, we will follow the idea of BALM to produce a doubly balanced augmented Lagrangian method (see \eqref{DB-LALM} and denote it by SPIDA-II) via choosing the Type II Bregman kernel function in Table \ref{Tab_Bregman}. As discussed in Section \ref{sec-one-block}, when the Bregman kernel functions are specified as the Type I of Table \ref{Tab_Bregman}, our SPIDA reduces to the LALM \eqref{LALM}, which will be denoted by SPIDA-I in our numerical comparison.

In the experiments, we first construct a randomly $s$-sparse vector $x^*\in\R^n$, where $s$ is the number of nonzero components. Then, we randomly generate a sample matrix $A\in\R^{m\times n}$ to construct the measurement vector $b$ via $b=Ax^*$. Here, we consider two different ways to generate the sample matrix $A$:
\begin{itemize}
	\item $A$ is a random Gaussian matrix;
	\item $A$ is a random partial DCT (discrete cosine transform) matrix.
\end{itemize}
We conduct different sizes of the problems by setting $(m,n,s)=(180i,960i,30i)$ with $i=1,2,\cdots,10$. Besides, to implement these algorithms, we take $(\gamma,\mu)=(1,1)$ for PDHG,  $(\varrho,\tau,\sigma)=(2,\frac{\sqrt{2}}{2},\frac{\sqrt{2}}{2})$ for GRPDA (where $\varrho$
is the parameter associated with the Golden ratio step, and $(\tau,\sigma)$ are the proximal parameters for the $x$- and $y$-subproblems, respectively), $(\gamma,\epsilon)=(1.5,0.015)$ for BALM \eqref{BALM},  $(\gamma,\mu)=(0.6,0.6)$ for SPIDA-I and $(\gamma,\mu,\epsilon)=(0.6,0.6,0.01)$ for SPIDA-II. All algorithms start with zero initial points and stop at
\begin{equation}\label{stop}
	\text{Tol}:=\frac{\|(x^{k+1},y^{k+1})-(x^k,y^k)\|}{\|(x^k,y^k)\|}\leq \varepsilon.
\end{equation}
They are terminated when satisfying the stopping criterion \eqref{stop} with $\varepsilon=10^{-6}$. The average calculation results of 10 trials of randomly generating matrix A are summarized in Tables \ref{tab3} and \ref{tab4}.

\begin{table}[!h]
	\caption{Numerical results for basis pursuit: (i) $A$ is a random Gaussian matrix.}\label{tab3}
	\centering
	\scriptsize{\begin{tabular*}{\textwidth}{@{\extracolsep{\fill}}llllllllll}\toprule
			& PDHG && GRPDA && BALM && SPIDA-I && SPIDA-II \\
			\cline{2-2} \cline{4-4} \cline{6-6} \cline{8-8} \cline{10-10}
			$i$& Iter. / Time &&  Iter. / Time  &&  Iter. / Time &&  Iter. / Time && Iter. / Time \\ \midrule
			$i=1$ & 689.5 / 1.36  && 587.5 / 1.16  && 292.6 / 0.60  && 348.3 / 0.70  && 167.6 / 0.41   \\
			$i=2$ & 1036.8 / 6.36  && 811.2 / 4.98  && 374.7 / 2.33  && 572.9 / 3.57  && 257.4 / 1.68   \\
			$i=3$ & 1525.3 / 16.20  && 1135.2 / 12.06  && 518.8 / 5.58  && 875.2 / 9.42  && 393.5 / 6.54   \\
			$i=4$ & 1595.8 / 27.53  && 1182.8 / 20.39  && 536.6 / 9.38  && 919.4 / 16.02  && 410.7 / 10.73   \\
			$i=5$ & 1295.3 / 31.37  && 980.6 / 23.78  && 451.2 / 11.19  && 737.3 / 18.03  && 335.9 / 11.96   \\
			$i=6$ & 3272.4 / 113.45  && 2346.8 / 81.19  && 1037.2 / 36.20  && 1938.0 / 67.91  && 887.9 / 42.56   \\
			$i=7$ & 1038.6 / 43.53  && 794.8 / 33.35  && 380.3 / 16.12  && 583.8 / 24.82  && 275.2 / 15.78   \\
			$i=8$ & 1567.3 / 85.37  && 1161.6 / 63.31  && 538.2 / 29.68  && 906.2 / 50.64  && 415.7 / 30.49   \\
			$i=9$ & 7314.6 / 472.55  && 5177.1 / 334.80  && 2389.7 / 155.92  && 4379.6 / 290.47  && 2073.1 / 179.49   \\
			$i=10$ & 8750.3 / 740.80  && 6198.3 / 527.87  && 2667.6 / 227.76  && 5238.9 / 450.59  && 2299.4 / 250.56   \\
			\bottomrule
	\end{tabular*}}
\end{table}

\begin{table}[!h]
	\caption{Numerical results for basis pursuit: (ii) $A$ is a random partial DCT matrix.}\label{tab4}
	\centering
	\scriptsize{\begin{tabular*}{\textwidth}{@{\extracolsep{\fill}}llllllllll}\toprule
			& PDHG && GRPDA && BALM && SPIDA-I && SPIDA-II \\
			\cline{2-2} \cline{4-4} \cline{6-6} \cline{8-8} \cline{10-10}
			$i$& Iter. / Time &&  Iter. / Time  &&  Iter. / Time &&  Iter. / Time && Iter. / Time \\ \midrule
			$i=1$ & 291.2 / 0.57  && 253.8 / 0.50  && 255.5 / 0.52  && 142.0 / 0.29  && 143.8 / 0.36  \\
			$i=2$ & 1278.4 / 8.21  && 901.5 / 5.77  && 873.4 / 5.63  && 756.0 / 4.95  && 763.8 / 5.46  \\
			$i=3$ & 397.0 / 4.61  && 325.0 / 3.76  && 322.3 / 3.77  && 207.9 / 2.43  && 210.1 / 3.79  \\
			$i=4$ & 1207.0 / 22.70  && 873.6 / 16.45  && 846.5 / 16.07  && 707.6 / 13.40  && 714.8 / 18.49  \\
			$i=5$ & 366.1 / 9.10  && 304.2 / 7.59  && 304.1 / 7.63  && 189.3 / 4.79  && 191.2 / 6.71  \\
			$i=6$ & 2558.8 / 91.87  && 1843.5 / 67.13  && 1781.4 / 65.38  && 1523.1 / 56.47  && 1538.3 / 73.30  \\
			$i=7$ & 1317.5 / 60.81  && 965.5 / 44.67  && 936.3 / 43.68  && 772.1 / 36.32  && 779.8 / 46.91  \\
			$i=8$ & 3931.9 / 231.36  && 2757.4 / 161.90  && 2658.5 / 157.81  && 2353.4 / 142.21  && 2376.7 / 181.08  \\
			$i=9$ & 631.7 / 45.56  && 482.6 / 34.82  && 471.0 / 34.29  && 357.9 / 26.63  && 361.6 / 33.40  \\
			$i=10$ & 1001.0 / 87.46  && 728.6 / 63.62  && 707.4 / 62.20  && 584.1 / 52.57  && 589.9 / 65.05  \\
			\bottomrule
	\end{tabular*}}
\end{table}

It can be easily seen from Table \ref{tab3} that our SPIDA-II takes the fewest iterations to obtain approximate solutions for the case where $A$ is a random Gaussian matrix. When dealing with the other case where $A$ is a partial DCT matrix, results in Table \ref{tab4} tell us that SPIDA-I and SPIDA-II have the almost same performance, while taking fewer iterations to achieve high-quality solutions than the other three first-order algorithms for \eqref{bp}. These computational results demonstrate that the symmetric updating way on the dual variable (i.e., twice calculations) equipped with a general proximal regularization can improve the numerical performance of the classical ALM. To further show the convergence behavior of our SPIDA, we focus on the case with $(m,n,s)=(360,1920,60)$ and plot the convergence curve of the relative error defined by ${\| x^{k} - x^{*} \|}/{\| x^{*} \|}$ with respect to iterations in Fig. \ref{fig-bp}. We see from Fig. \ref{fig-bp} that our SPIDA has a promisingly linear convergence behavior for one-block linearly constrained optimization problems.

\begin{figure}[!h]
	\centering
	\subfigure[Gaussian matrix]{
		\label{fig-bp-1}
		\includegraphics[width=0.47\textwidth]{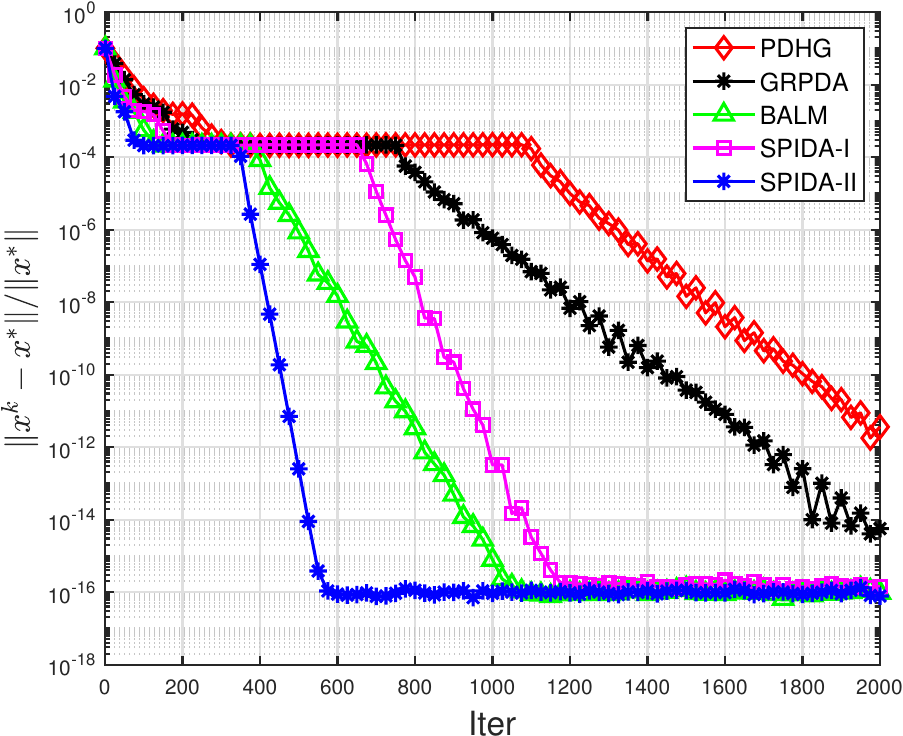}}
	\subfigure[partial DCT matrix]{
		\label{fig-bp-2}
		\includegraphics[width=0.47\textwidth]{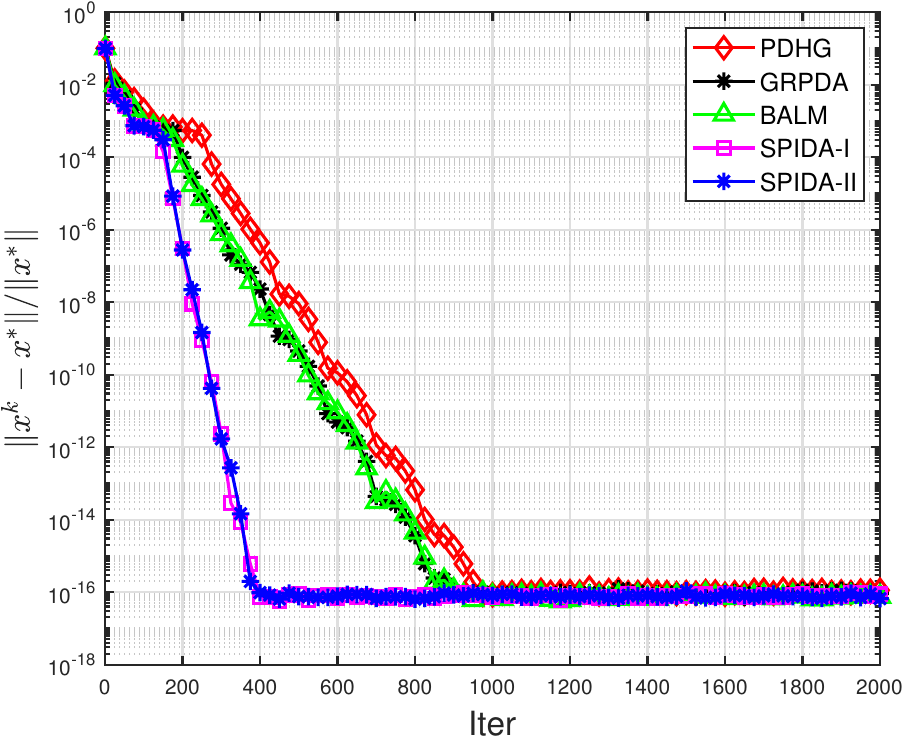}}
	\caption{Evolution of the relative error with respect to iterations for solving the basis pursuit problem \eqref{bp}.}
	\label{fig-bp}
\end{figure}

Due to the randomness of the generated data sets for basis pursuit, we are further interested in the stability of our SPIDA in practice. Therefore, we show the averaged time and iterations of ten trials by the bars, and their standard deviations by the line segments for the cases $i=9$ and $i=10$ in Fig. \ref{bpl}. Comparatively, it is illustrated in Fig. \ref{bpl} that our SPIDA performs stably for the random data sets. 

\begin{figure}[!h]
	\centering
	\subfigure[Gaussian matrix]{
		\includegraphics[width=0.478\textwidth]{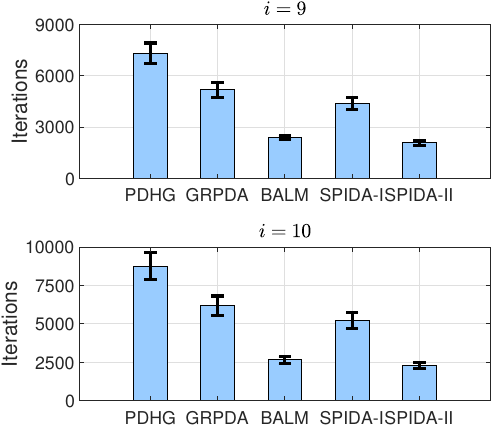}
		\includegraphics[width=0.472\textwidth]{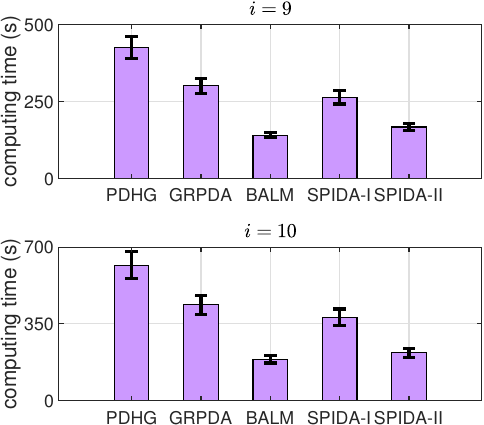}
	}
	\subfigure[partial DCT matrix]{
		\includegraphics[width=0.482\textwidth]{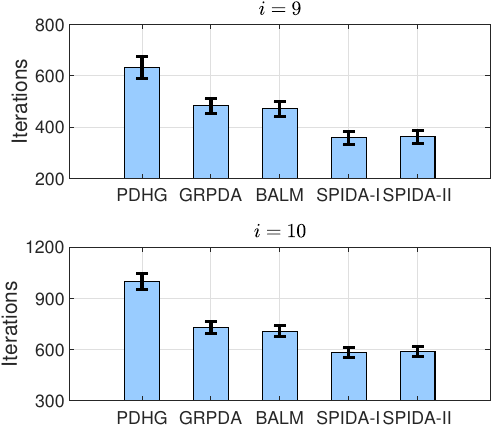}
		\includegraphics[width=0.468\textwidth]{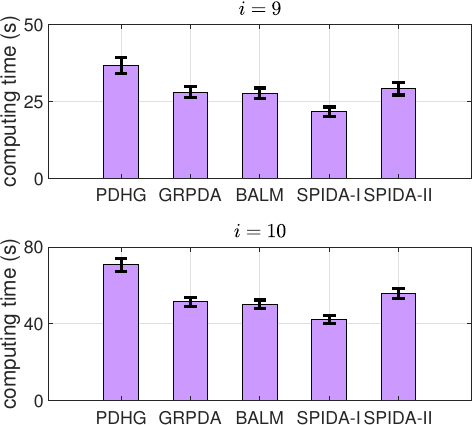}
	}
	\caption{Stability investigation of all algorithms for solving basis pursuit by setting $i=9$ and $i=10$, where the averaged iterations and computing time are shown by the bars, and their standard deviations is plotted by the line segments.}
	\label{bpl}
\end{figure}

\subsection{RPCA}\label{sec-rpca}
In this subsection, we give a numerical feedback to the applicability of our SPIDA to multi-block convex programming discussed in Section \ref{sec-multi-block}. Therefore, we consider a well-studied RPCA model \cite{CLMW11}, which refers to the task of recovering a sparse matrix and a low-rank one. Specifically, the RPCA under consideration takes the form
\begin{equation}\label{P1}
	\min_{X,Z}\; \left\{\;\| X \|_{*} + \lambda \| Z \|_{1} \; |\; X+Z=H \; \right\},
\end{equation}
where $\| X \|_{*}$ is the nuclear norm (i.e., the sum of all singular values of $X$) for promoting the low-rankness of $X\in\R^{m\times n}$,  $\| Z \|_{1} $ represents the $\ell_{1}$-norm for inducing the sparsity of $Z\in\R^{m\times n}$, and $H\in\R^{m\times n}$ is a given matrix. Obviously, the RPCA model \eqref{P1} is a special case of the multi-block model \eqref{multi-block-min} with two-block structure. Therefore, we can easily reformulate \eqref{P1} as the following separable saddle point problem:
\begin{equation*}\label{P2}
	\min_{X,Z}\max_{Y}\; \left\{\;\L(X,Z,Y)=\| X \|_{*} + \lambda \| Z \|_{1} + \langle X+Z,Y \rangle - \langle H,Y \rangle\;\right\}.
\end{equation*}
In this part, we also mainly compare SPIDA with PDHG and GRPDA. Here, we consider the RPCA with synthetic and real data sets to verify the reliability of our algorithm for multi-block convex programming.

We first conduct the numerical performance of these algorithms on synthetic data sets. In this situation, we generate a low-rank matrix $X^*$ via $X^* = UV$, where $U\in\R^{n\times r}$ and $V\in\R^{r\times n}$ are independently random matrices whose entries are drawn from Gaussian distribution $\mathcal{N}(0,1)$. Then, we generate a sparse matrix $Z^*$ by randomly choosing a support set $\Omega$ of size $0.1\times n^2$ (i.e., $10\%$ nonzero components), and all elements are independently sampled from a uniform distribution in $[-50,50]$. Finally, we let $H = X^*+Z^*$ be the observed matrix. Clearly, $(X^*,Z^*)$ is the true solution of \eqref{P1}. Throughout our experiments, we still employ the stopping criterion \eqref{stop} with setting $\varepsilon= 10^{-5}$ for all algorithms. Besides, we take $(\gamma,\mu)=(70.7107,0.0283)$ for PDHG, $(\psi,\tau,\sigma)=(1.618,\frac{70.7107}{\sqrt{1.618}},\frac{0.0283}{\sqrt{1.618}})$ for GRPDA, $(\gamma,\mu)=(0.77*70.7107,0.0283)$ for SPIDA. In Table \ref{tab5}, we additionally report the rank of the obtained low-rank matrix ($\text{rank}(\hat{X})$), the number of nonzero components of the obtained sparse matrix ($\|\hat{Z}\|_0$), the relative error ({\sffamily{Rerr}}) defined by
\begin{equation*}
	\text{\sffamily Rerr} = \frac{\| \hat{X} + \hat{Z} -X^* - Z^*\|_{F}}{\| X^*+Z^*\|_{F}},
\end{equation*}
where $\hat{X}$ and $\hat{Z}$ respectively represent the low-rank and sparse matrices obtained by the algorithms. It can be seen from Table \ref{tab5} that our SPIDA takes less iterations and computing time than both PDHG and GRPDA to achieve almost the same low-rank and sparse separation on the observed matrix $H$.

\begin{table}[htbp]
	\caption{Numerical results of RPCA with synthetic data sets.}\label{tab5}
	\centering
	\small{\begin{tabular*}{\textwidth}{@{\extracolsep{\fill}}ccccccc}
			\toprule
			$(n,r)$ & Methods & $\text{rank}(\hat{X})$ & $ \| \hat{Z} \|_{0} $ & $ \text{\sf Rerr}$ & Iter. & Time \\ \toprule
			\multirow{3}{*}{\shortstack{  $(256,13)$}}
			& PDHG & 13 & 6528 & 6.3011$\times 10^{-4}$ & 146 & 1.13 \\
			& GRPDA & 13 & 6518 & 4.6516$\times 10^{-4}$ & 138 & 1.06 \\
			& SPIDA & 13 & 6523 & 6.3007$\times 10^{-4}$ & 112 & 0.82 \\
			\midrule
			\multirow{3}{*}{\shortstack{  $(512,26)$}}
			& PDHG & 26 & 26143 & 1.2775$\times 10^{-4}$ & 127 & 4.73 \\
			& GRPDA & 26 & 26151 & 1.1530$\times 10^{-4}$ & 119 & 4.50 \\
			& SPIDA & 26 & 26128 & 1.7869$\times 10^{-4}$ & 86 & 3.23 \\
			\midrule
			\multirow{3}{*}{\shortstack{  $(1024,51)$}}
			& PDHG & 51 & 104677 & 7.0827$\times 10^{-5}$ & 78 & 14.32 \\
			& GRPDA & 51 & 104725 & 5.0445$\times 10^{-5}$ & 87 & 15.48 \\
			& SPIDA & 51 & 104677 & 7.0421$\times 10^{-5}$ & 61 & 10.88 \\
			\midrule
			\multirow{3}{*}{\shortstack{  $(2048,102)$}}
			& PDHG & 102 & 419106 & 2.4605$\times 10^{-5}$ & 80 & 269.52 \\
			& GRPDA & 102 & 419221 & 1.7315$\times 10^{-5}$ & 105 & 378.50 \\
			& SPIDA & 102 & 419106 & 2.4006$\times 10^{-5}$ & 67 & 205.77 \\
			\midrule
			\multirow{3}{*}{\shortstack{  $(2560,128)$}}
			& PDHG & 128 & 654970 & 1.6256$\times 10^{-5}$ & 93 & 299.85 \\
			& GRPDA & 128 & 655097 & 1.5364$\times 10^{-5}$ & 124 & 383.87 \\
			& SPIDA & 128 & 654965 & 1.5685$\times 10^{-5}$ & 80 & 248.64 \\
			\bottomrule
	\end{tabular*}}
\end{table}

Below, we are concerned with the numerical performance of SPIDA on RPCA with real data sets. So, we consider the application of model \eqref{P1} in background separation of surveillance video. Here, we select three well-tested videos, i.e., {\sf Shoppingmall,} {\sf Lobby}, and {\sf Hall Airport}, and select the first $200$ frames of each video to construct an observed matrix $H \in \mathbb{R}^{n \times 200}$, where $n=n_1\times n_2$ with $n_1$ and $n_2$ representing the height and width of the video, respectively. Notice that the true rank and sparsity of these videos are unknown. Therefore, we shall report the number of iterations, the computing time in seconds, the objective values ({\sf Obj.}), and the error ({\sf Err.}) defined by
\begin{equation*}
	\text{\sf Err} = \frac{\| \hat{X} + \hat{Z} - H\|_{F}}{\| H \|_{F}}.
\end{equation*}
Throughout, we set $\varepsilon=5 \times 10^{-4}$ in \eqref{stop} as the stopping tolerance for all algorithms. Computational results are summarized in Table \ref{tab6}, which also demonstrate that our SPIDA runs a little faster than both PDHG and GRPDA for real-world data sets. In Fig. \ref{Fig5}, we list the separated background and foreground of some frames. We can see from these results that all primal-dual-type algorithms are reliable for multi-block convex programming \eqref{multi-block-min}, especially for RPCA.

\begin{table}[htbp]
	\caption{Numerical results of RPCA with real-world data sets.}\label{tab6}
	\centering
	{\begin{tabular*}{\textwidth}{@{\extracolsep{\fill}}ccccccc}
			\toprule
			$(m,n)$ & Methods & {\sf Obj.}& {\sf Err.} & Iter. & Time \\ \toprule
			\multirow{4}{*}{\shortstack{ Shoppingmall \\  $(81920,200)$}}
			& PDHG & 3262.1 & 3.0359$\times 10^{-3}$ & 58 & 33.72 \\
			& GRPDA & 3275.2 & 2.3097$\times 10^{-3}$ & 60 & 30.15 \\
			& SPIDA & 3267.7 & 2.6796$\times 10^{-3}$ & 49 & 28.52 \\
			\midrule
			\multirow{4}{*}{\shortstack{ Lobby \\  $(20480,200)$}}
			& PDHG & 969.65 & 4.1045$\times 10^{-3}$ & 128 & 13.53 \\
			& GRPDA & 977.38 & 2.9694$\times 10^{-3}$ & 127 & 13.66 \\
			& SPIDA & 973.61 & 3.5057$\times 10^{-3}$ & 109 &11.58 \\
			\midrule
			\multirow{4}{*}{\shortstack{ Hall airport \\  $(25344,200)$}}
			& PDHG & 2178.5 & 3.6816$\times 10^{-3}$ & 66 & 10.34 \\
			& GRPDA & 2188.3 & 2.9182$\times 10^{-3}$ & 68 & 8.98 \\
			& SPIDA & 2183.2 & 3.2527$\times 10^{-3}$ & 57 & 7.50 \\
			\bottomrule
	\end{tabular*}}
\end{table}

\begin{figure}[htbp]
	\begin{center}
		\includegraphics[width=0.13\linewidth]{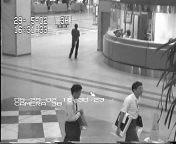}
		\includegraphics[width=0.13\linewidth]{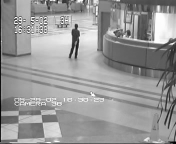}
		\includegraphics[width=0.13\linewidth]{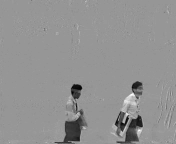}
		\includegraphics[width=0.13\linewidth]{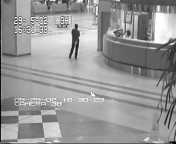}
		\includegraphics[width=0.13\linewidth]{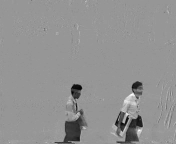}		
		\includegraphics[width=0.13\linewidth]{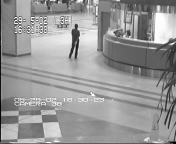}
		\includegraphics[width=0.13\linewidth]{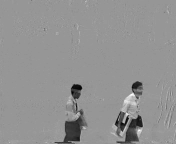}\\
		\includegraphics[width=0.13\linewidth]{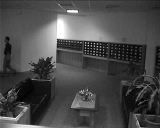}
		\includegraphics[width=0.13\linewidth]{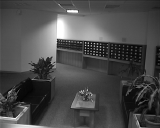}
		\includegraphics[width=0.13\linewidth]{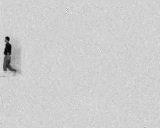}
		\includegraphics[width=0.13\linewidth]{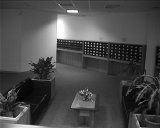}
		\includegraphics[width=0.13\linewidth]{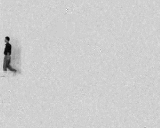}
		\includegraphics[width=0.13\linewidth]{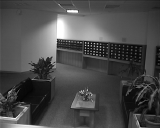}
		\includegraphics[width=0.13\linewidth]{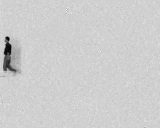}\\
		\includegraphics[width=0.13\linewidth]{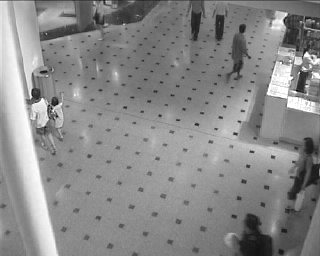}
		\includegraphics[width=0.13\linewidth]{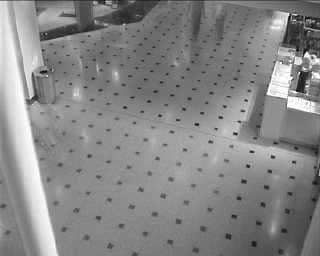}
		\includegraphics[width=0.13\linewidth]{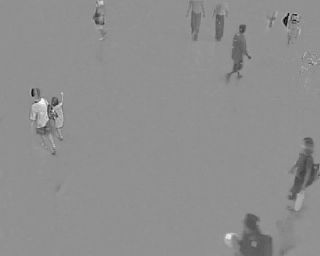}
		\includegraphics[width=0.13\linewidth]{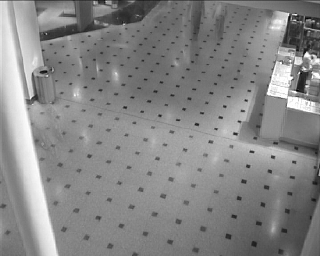}
		\includegraphics[width=0.13\linewidth]{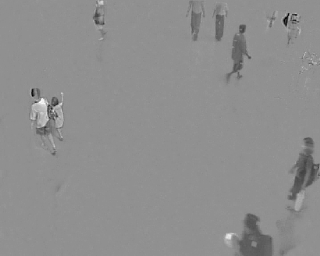}
		\includegraphics[width=0.13\linewidth]{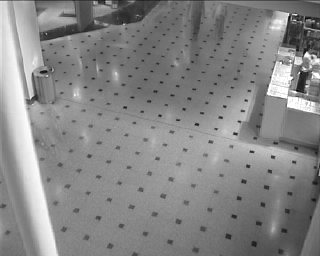}
		\includegraphics[width=0.13\linewidth]{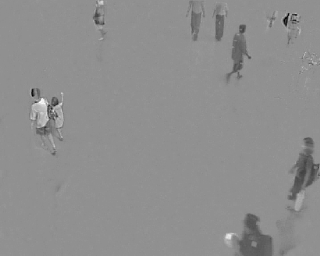}
	\end{center}
	\caption{Background and foreground separations of surveillance videos. Three lines of pictures from top to bottom correspond to the Hall airport, Lobby and Shoppingmall in the video. The first column corresponds to the 100th frame of the video. The second and third columns are the results extracted by PDHG. The fourth and fifth columns are the results extracted by PDHG.The last two columns are the results obtained by our SPIDA.}\label{Fig5}
\end{figure}

\subsection{Image restoration}\label{sec_image}
In this subsection, we apply the proposed SPIDA to solve an image restoration model with pixels constraints introduced in \cite{HHYY14}. Such a model takes the form
\begin{equation}\label{tv-l2}
	\min_{ x\in\mathbb{B}}\;\left\{ \| |{\bm D}x |\|_1 +\frac{\lambda}{2}\|Kx-b\|^2\; \right\},
\end{equation}
where $\mathbb{B}$ is a box area characterizing the pixels of an image (indeed, $\mathbb{B}=[0,1]$ and $\mathbb{B}=[0,255]$ if the image are double precision and 8-bit gray scale, respectively); $\lambda$ is a positive trade-off parameter between the data-fidelity and regularization terms; $\bm{D}:=(\partial_1,\partial_2)$ denotes the gradient operator with $\partial_1$ and $\partial_2$ being the discretized derivatives in the horizontal and vertical directions, respectively; $K$ is the matrix representation of a blur operator and $b$ is a corrupted image with additive noise. Clearly, model \eqref{tv-l2} is equivalent to the following saddle point problem:
\begin{equation}\label{tv-saddle}
	\min_{ x\in\mathbb{B}}\max_{y\in\mathbb{B}_\infty}\;\left\{ \langle {\bm D}x,y\rangle  +\frac{\lambda}{2}\|Kx-b\|^2\; \right\},
\end{equation}
where $\mathbb{B}_{\infty}:=\left\{y\;|\; \|y\|_\infty \leq 1\right\}$. Consequently, when applying PDHG \eqref{fopda} to \eqref{tv-saddle}, the iterative scheme is specified as
\begin{equation}\label{fopda-tv}
	\left\{\begin{aligned}
		y^{k+1}&=\arg\max_{y\in\mathbb{B}_{\infty}}\left\{\langle {\bm D}x^k,y\rangle -\frac{\gamma}{2}\|y-y^k\|^2\right\}\equiv \Pi_{\mathbb{B}_\infty}(y^k+\gamma^{-1}\bm{D}x^k), \\
		x^{k+1}&=\arg\min_{x\in\mathbb{B}}\left\{\frac{\lambda}{2}\|Kx-b\|^2 + \langle {\bm D}x,2y^{k+1}-y^k\rangle + \frac{\mu}{2}\|x-x^k\|^2\right\},
	\end{aligned}\right.
\end{equation}
where the updating order of $x$ and $y$ is exchanged since the $y$-subproblem is simpler than the $x$-part, while the appearance of the deblurring matrix $K$ makes $x$-subproblem relatively difficult without a closed-form solution. In this case, we employ the projected Barzilai-Borwein method in \cite{DF05} and allow a maximal number of $50$ for the inner loop to find an approximate solution of the $x$-subproblem in \eqref{fopda-tv}. To apply our SPIDA to \eqref{tv-saddle}, we consider two choices on the Bregman kernel functions: (i) $\psi(x)=\frac{1}{2}\|x\|^2$ and $\phi(y)=\frac{1}{2}\|y\|^2$; (ii) $\psi(x)=\frac{1}{2}\|x\|^2_{(I -\lambda\mu^{-1}K^\top K)}$ and $\phi(y)=\frac{1}{2}\|y\|^2$. In what follows, we denote our SPIDA equipped with the above two kernel functions by SPIDA-I and SPIDA-II, respectively. As a consequence, SPIDA-I and SPIDA-II are specified as
\begin{equation*}
	\text{(SPIDA-I)}\quad \left\{\begin{aligned}
		\tilde{y}^{k+1}&= \Pi_{\mathbb{B}_\infty}(y^k+\gamma^{-1}\bm{D}x^k), \\
		x^{k+1}&=\arg\min_{x\in\mathbb{B}}\left\{\frac{\lambda}{2}\|Kx-b\|^2 + \langle {\bm D}x,\tilde{y}^{k+1}\rangle + \frac{\mu}{2}\|x-x^k\|^2\right\},\\
		y^{k+1}&=\Pi_{\mathbb{B}_\infty}(y^k+\gamma^{-1}\bm{D}x^{k+1}),
	\end{aligned}\right.
\end{equation*}
and
\begin{equation*}
	\text{(SPIDA-II)}\quad\left\{\begin{aligned}
		\tilde{y}^{k+1}&= \Pi_{\mathbb{B}_\infty}(y^k+\gamma^{-1}\bm{D}x^k), \\
		x^{k+1}&=\Pi_{\mathbb{B}}\left( x^k-\lambda\mu^{-1}K^\top(Kx^k-b)-\mu^{-1}\bm{D}^\top \tilde{y}^{k+1}\right),\\
		y^{k+1}&=\Pi_{\mathbb{B}_\infty}(y^k+\gamma^{-1}\bm{D}x^{k+1}).
	\end{aligned}\right.
\end{equation*}
We also employ the projected Barzilai-Borwein method to find an approximate solution of the $x$-subproblem of SPIDA-I.

\begin{figure}[htbp]
	\centering
	\includegraphics[width=0.16\linewidth]{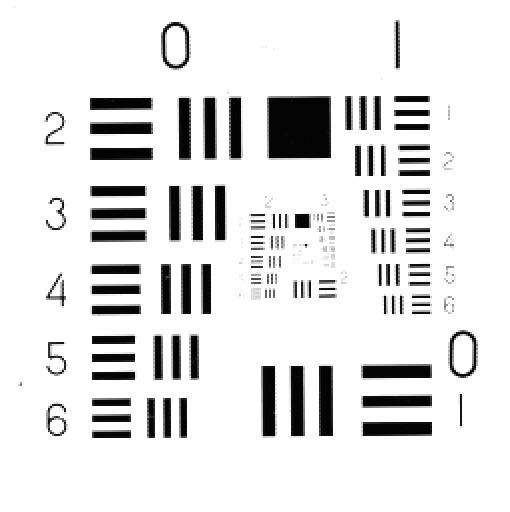}
	\includegraphics[width=0.16\linewidth]{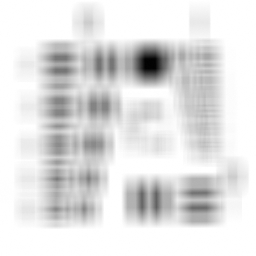}
	\includegraphics[width=0.16\linewidth]{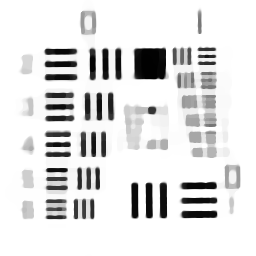}
	\includegraphics[width=0.16\linewidth]{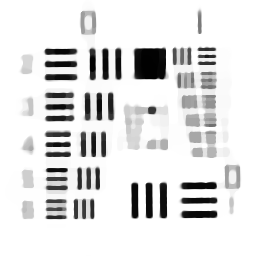}
	\includegraphics[width=0.16\linewidth]{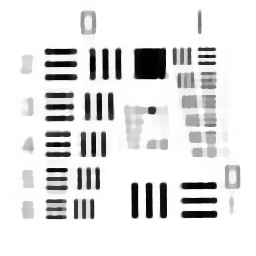}
	\includegraphics[width=0.16\linewidth]{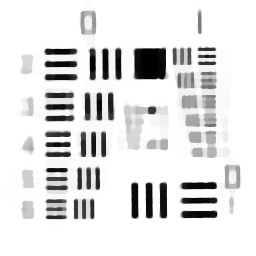}
	\\
	\includegraphics[width=0.16\linewidth]{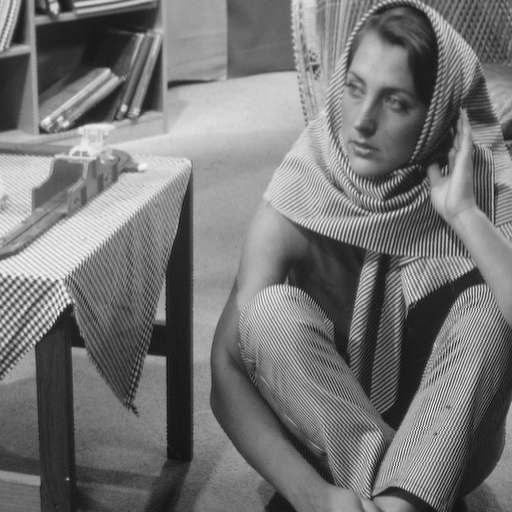}
	\includegraphics[width=0.16\linewidth]{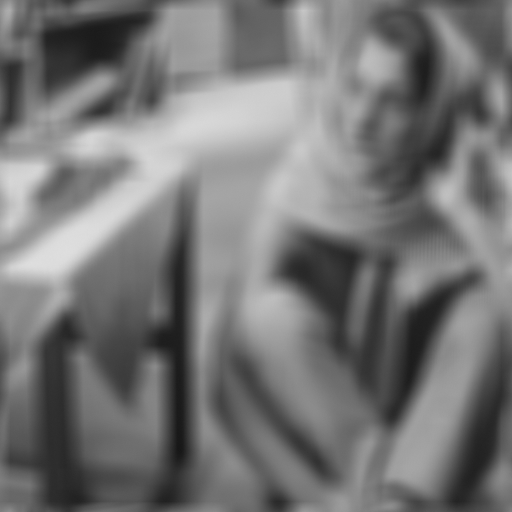}
	\includegraphics[width=0.16\linewidth]{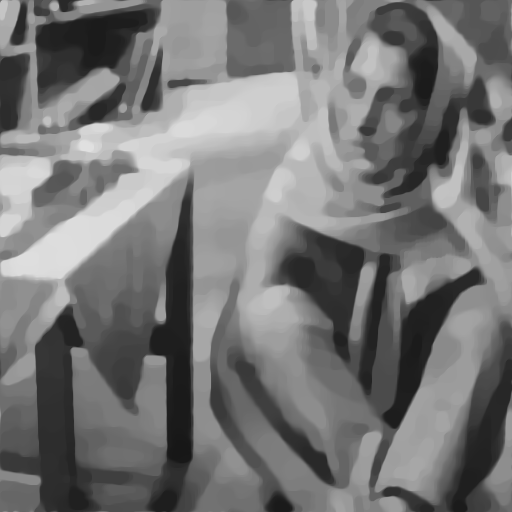}	
	\includegraphics[width=0.16\linewidth]{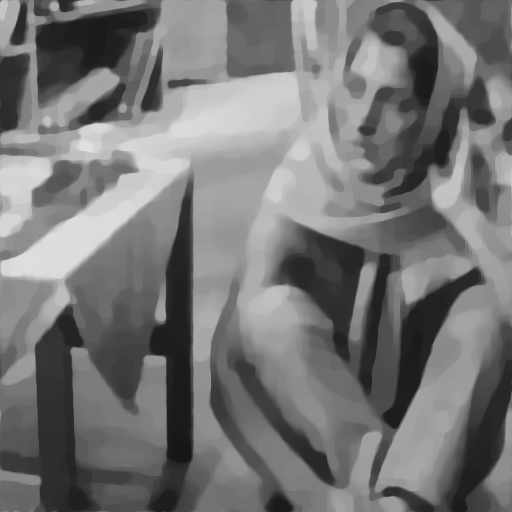}
	\includegraphics[width=0.16\linewidth]{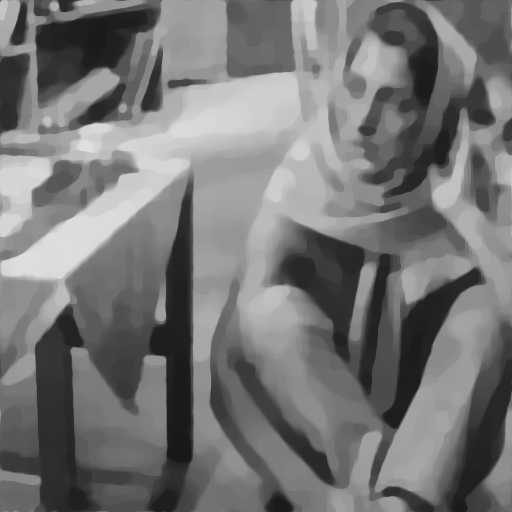}
	\includegraphics[width=0.16\linewidth]{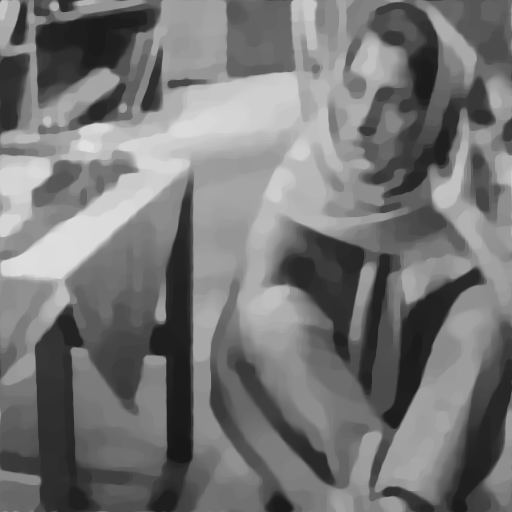}
	\\
	\includegraphics[width=0.16\linewidth]{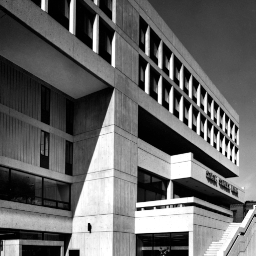}
	\includegraphics[width=0.16\linewidth]{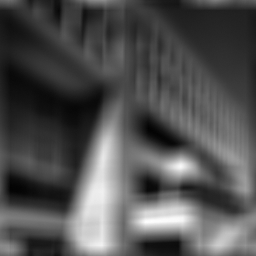}
	\includegraphics[width=0.16\linewidth]{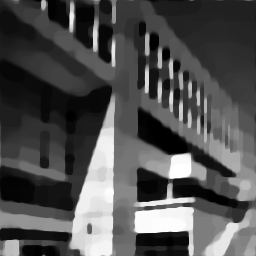}	
	\includegraphics[width=0.16\linewidth]{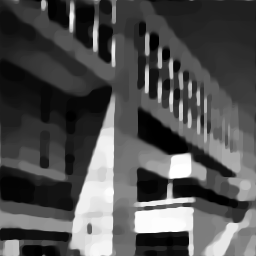}
	\includegraphics[width=0.16\linewidth]{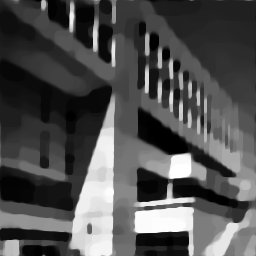}
	\includegraphics[width=0.16\linewidth]{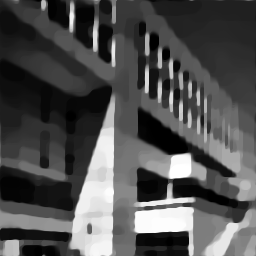}
	\\
	\includegraphics[width=0.16\linewidth]{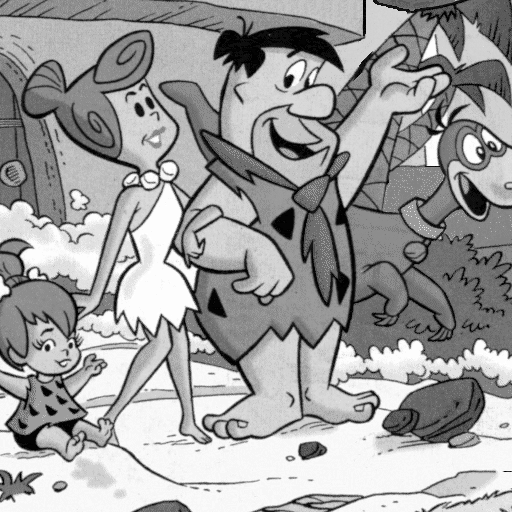}
	\includegraphics[width=0.16\linewidth]{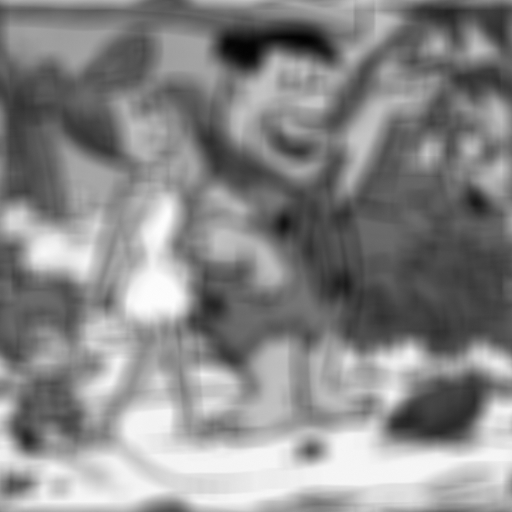}
	\includegraphics[width=0.16\linewidth]{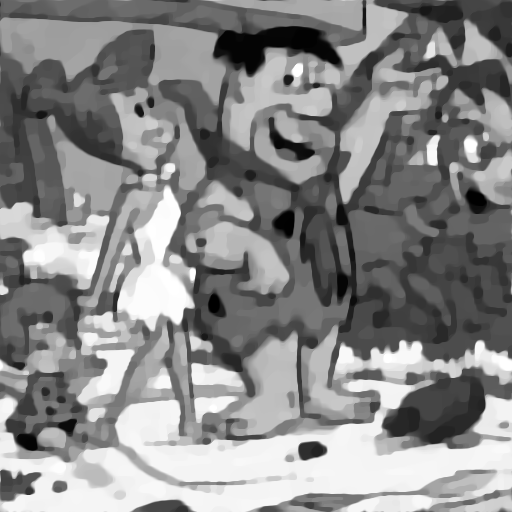}	
	\includegraphics[width=0.16\linewidth]{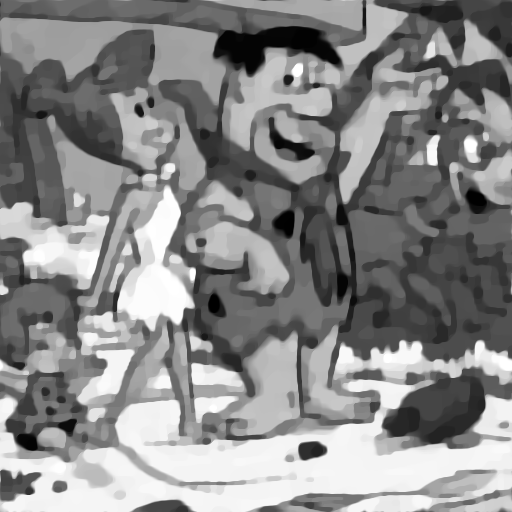}
	\includegraphics[width=0.16\linewidth]{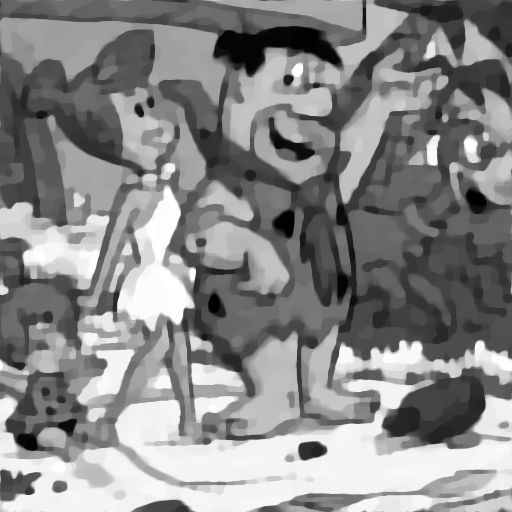}
	\includegraphics[width=0.16\linewidth]{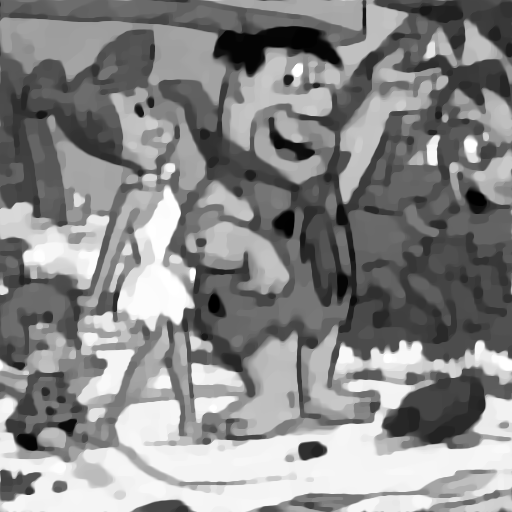}	
	\caption{From left to right: the original image, observed image corrupted by blurring kernel size $21 \times 21$, recovered images by PDHG, GRPDA, SPIDA-I and SPIDA-II, respectively.}
	\label{Fig6}
\end{figure}

In our experiments, we consider several widely tested images as listed in the first column of Fig. \ref{Fig6}. These images are corrupted by the blur operator with $21\times 21$ uniform kernels. Then, the blurred images are further corrupted by adding the zero-white Gaussian noise with standard deviation $0.002$. The degraded images are listed in the second column of Fig. \ref{Fig6}. We set $\varepsilon= 10^{-4}$ in \eqref{tv-saddle} as the stopping tolerance for all algorithms. The trade-off parameter $\lambda$ is specified as $\lambda=1000$. Moreover, we take $(\gamma,\mu)=(0.016,500)$ for PDHG, $(\psi,\tau,\sigma)=(1.618,0.016,\frac{500}{1.618})$ for GRPDA, $(\gamma,\mu)=(0.016,0.75*500)$ for SPIDA-I and $(\gamma,\mu)=(0.016,500)$ for SPIDA-II. Some preliminary numerical results are summarized in Table \ref{tab7}, which clearly shows that our SPIDA-I and SPIDA-II perform better than PDHG and GRPDA in terms of iterations and computing time. Note that SPIDA-I requires less iterations to achieve the almost same SNR values than SPIDA-II. However, SPIDA-II takes much less computing time than SPIDA-I, thanks to the closed-form solutions of SPIDA-II. These results efficiently support that our symmetric idea is able to improve the numerical performance of the original primal-dual algorithm.

\begin{table}[!h]
	\caption{Numerical results for image restoration.}\label{tab7}
	\centering
	\scriptsize{\begin{tabular*}{\textwidth}{@{\extracolsep{\fill}}llllllll}\toprule
			& PDHG && GRPDA && SPIDA-I && SPIDA-II \\
			\cline{2-2} \cline{4-4} \cline{6-6} \cline{8-8}
			image & Iter. / Time / SNR &&  Iter. / Time / SNR  &&   Iter. / Time / SNR  && Iter. / Time / SNR \\ \midrule
			\multirow{1}{*}{\shortstack{chart}}
			& 1476 / 16.37 / 19.214 && 1890 / 25.55 / 19.187 && 1217 / 15.12 / 19.220 && 1470 / 5.00 / 19.213 \\
			\midrule
			\multirow{1}{*}{\shortstack{barbara}}
			& 793 / 37.37 / 17.032 && 1035 / 59.67 / 17.036 && 677 / 36.64 / 17.029 && 792 / 15.73 / 17.032 \\
			\midrule
			\multirow{1}{*}{\shortstack{mit}}
			& 1080 / 13.11 / 12.960 && 1433 / 21.94 / 12.942 && 886 / 12.22 / 12.965 && 1078 / 3.81 / 12.960 \\
			\midrule
			\multirow{1}{*}{\shortstack{flinstones }}
			& 1141 / 54.02 / 14.439 && 1509 / 83.65 / 14.431 && 937 / 49.62 / 14.441 && 1171 / 23.47 / 14.440 \\
			\bottomrule
	\end{tabular*}}
\end{table}

In Fig. \ref{Fig6}, we list the recovered images by PDHG, GRPDA, SPIDA-I, and SPIDA-II from the third column to the last one, respectively. It can be seen that all methods are reliable to solve model \eqref{tv-saddle}. Finally, we list the evolution of SNR value with respect to computing time for solving image restoration model \eqref{tv-saddle} in Fig. \ref{fig7}. These curves further show our SPIDA-II with easy subproblems has a superiority over the other algorithms in terms of computing time.

\begin{figure}[!h]
	\centering
	\subfigure[chart]{
		\includegraphics[width=0.485\textwidth,height=0.3\textheight]{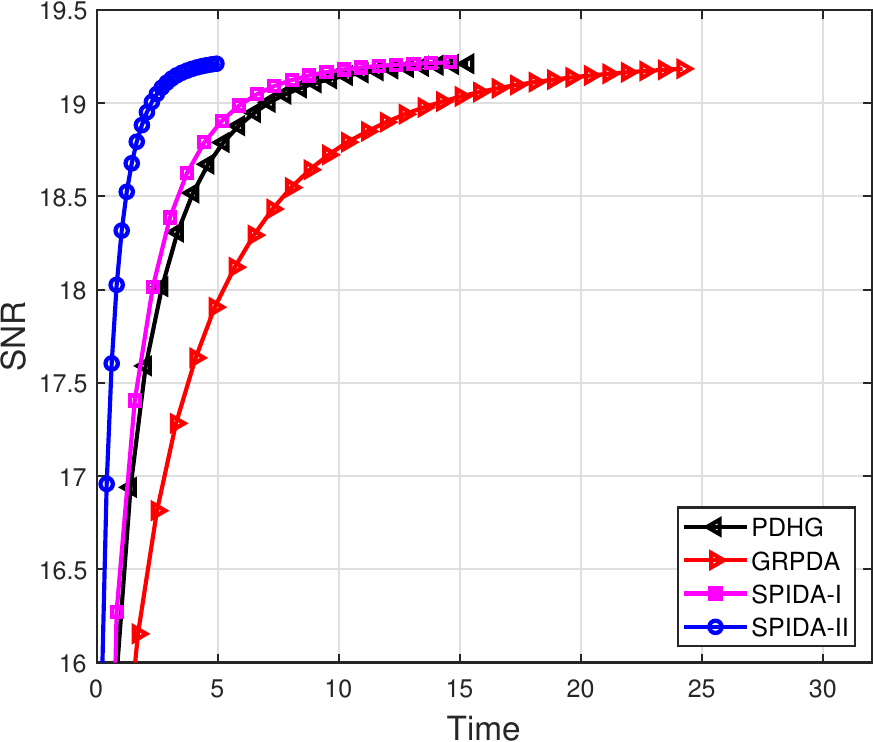}}
	\subfigure[barbara]{
		\includegraphics[width=0.485\textwidth,height=0.3\textheight]{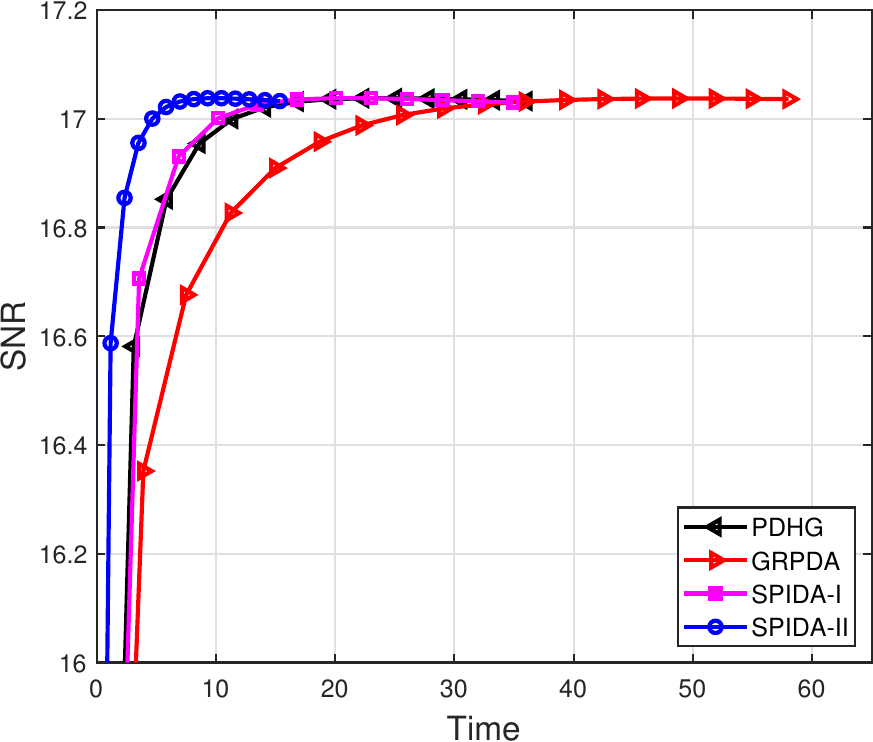}}
	\subfigure[mit]{
		\includegraphics[width=0.485\textwidth,height=0.3\textheight]{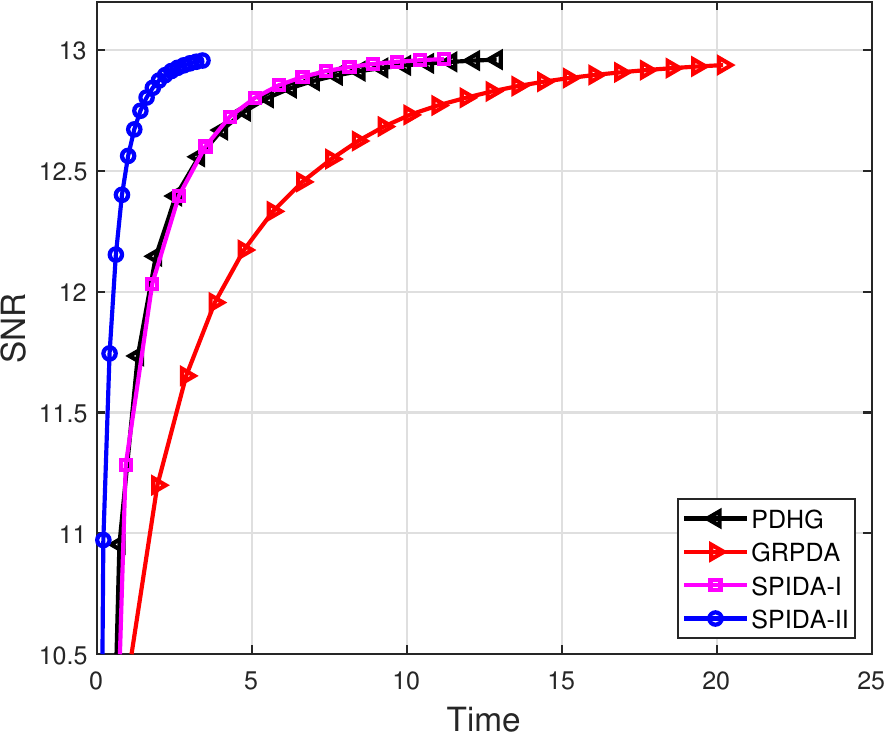}}
	\subfigure[flinstones]{
		\includegraphics[width=0.485\textwidth,height=0.3\textheight]{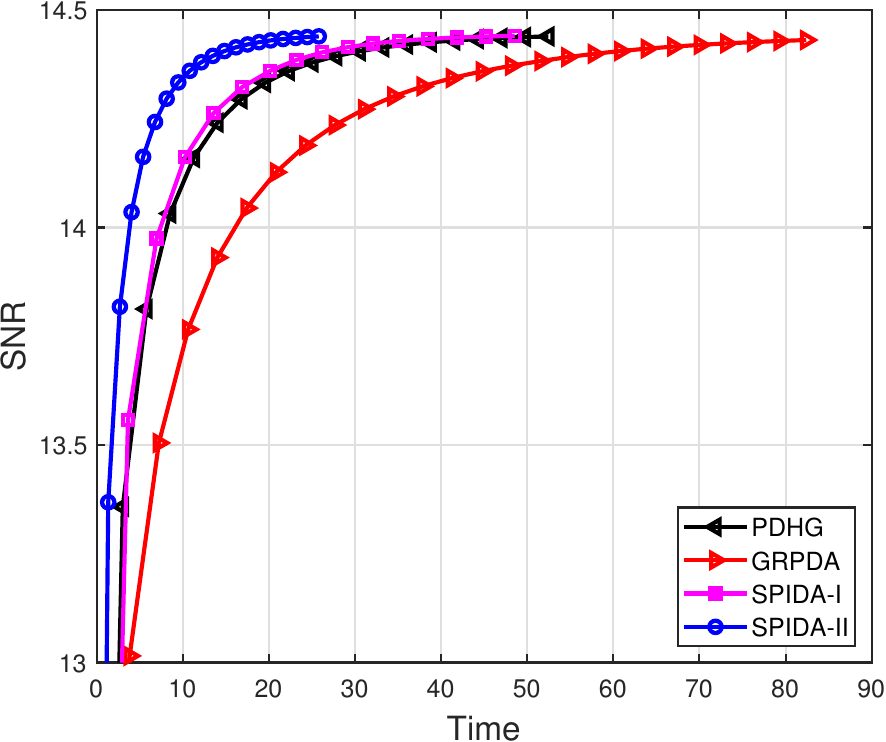}}
	\caption{Evolution of SNR value with respect to computing time for solving image restoration model \eqref{tv-saddle}.  }
	\label{fig7}
\end{figure}

\section{Conclusions}\label{Sec6}
In this paper, we proposed a new primal-dual algorithmic framework for convex-concave saddle point problems by applying the symmetric idea to compute the dual variable twice. Notice that a Bregman proximal regularization term is embedded in each subproblem, which is of benefit for us designing customized algorithms for some structured optimization problems. Moreover, we can gainfully understand the classical (linearized) augmented Lagrangian method and some parallel augmented Lagrangian-based splitting methods for linearly constrained convex optimization. A series of experiments demonstrate that our new algorithm works better than the other two compared methods as long as the dual subproblem (i.e., $y$-subproblem) is easy enough with cheap computational cost. In the future, we will consider some acceleration techniques such as extrapolation on the method. Besides, we notice that all subproblems of our algorithm are required to be solved exactly, which is expensive or impossible in some cases (e.g., see experiments in image restoration). Therefore, designing a practical inexact version of the proposed algorithm is also one of our future concerns.

\bigskip
\noindent {\bf Acknowledgments.} The authors are grateful to the anonymous referees for their close reading, insightful comments, and valuable suggestions, which greatly help us improve the quality of this paper. Moreover, the first author would like to thank Professor Min Yan for bringing his attention to \cite{CHZ16}. This research was supported in part by National Natural Science Foundation of China (Nos. 12371303 and 11901294), Zhejiang Provincial Natural Science Foundation of China (No. LZ24A010001), and Ningbo Natural Science Foundation (No. 2023J014).

\end{document}